%%%
%%% Version of October 10, 2023 %%%%%
%%%
\documentclass[11pt]{article}
\usepackage{amssymb,amsmath}
\usepackage[mathscr]{eucal}
\usepackage[cm]{fullpage}
\usepackage[english]{babel}
\usepackage[latin1]{inputenc}
\usepackage{xcolor}
%
%%%\usepackage{backref} %para depois tirar
%
%%%%%%%%%%% Macros %%%%%%%%%%%
\def\dom{\mathop{\mathrm{Dom}}\nolimits}
\def\im{\mathop{\mathrm{Im}}\nolimits}
\def\id{\mathrm{id}}
\def\N{\mathbb N}
\def\Sym{\mathcal{S}}
\def\I{\mathcal{I}} 

\def\C{\mathcal{C}}
\def\D{\mathcal{D}}

\def\DI{\mathcal{DI}}
\def\OPDI{\mathcal{OPDI}}
\def\ODI{\mathcal{ODI}}
\def\OCI{\mathcal{OCI}}
\def\CI{\mathcal{CI}}
\def\MDI{\mathcal{MDI}}
\def\PODI{\mathcal{PODI}}
\def\POI{\mathcal{POI}}
\def\POPI{\mathcal{POPI}}
\def\PORI{\mathcal{PORI}}
%\def\PO{\mathcal{PO}}
%\def\POD{\mathcal{POD}}
%\def\POP{\mathcal{POP}}
%\def\POR{\mathcal{POR}}

%%%%%%%%%%%
\newtheorem{theorem}{Theorem}[section]
\newtheorem{proposition}[theorem]{Proposition}

\newtheorem{lemma}[theorem]{Lemma}
\newenvironment{proof}{\begin{trivlist}\item[\hskip%
\labelsep{\bf Proof.}]}%
{\qed\rm\end{trivlist}}
\newcommand{\qed}{{\unskip\nobreak
\hfil\penalty50\hskip .001pt \hbox{}
          \nobreak\hfil
         \vrule height 1.2ex width 1.1ex depth -.1ex
           \parfillskip=0pt\finalhyphendemerits=0\medbreak}}
%%%%%%%%%%%%%%%%%%%%%%%%%%%%%%

\newcommand{\lastpage}{\addresss}

\newcommand{\addresss}{\small \sf

\noindent{\sc Ilinka Dimitrova},
Department of Mathematics,
Faculty of Mathematics and Natural Science,
South-West University "Neofit Rilski",
2700 Blagoevgrad,
Bulgaria;
e-mail: ilinka\_dimitrova@swu.bg.

\medskip

\noindent{\sc V\'\i tor H. Fernandes},
Center for Mathematics and Applications (NOVA Math)
and Department of Mathematics, 
Faculdade de Ci\^encias e Tecnologia,
Universidade Nova de Lisboa,
Monte da Caparica,
2829-516 Caparica,
Portugal;
e-mail: vhf@fct.unl.pt.

\medskip

\noindent{\sc J\"{o}rg Koppitz},
Institute of Mathematics and Informatics,
Bulgarian Academy of Sciences,
1113 Sofia,
Bulgaria;
e-mail: koppitz@math.bas.bg.

\medskip

\noindent{\sc Teresa M. Quinteiro},
Instituto Superior de Engenharia de Lisboa,
1950-062 Lisboa,
Portugal.
Also:
Center for Mathematics and Applications (NOVA Math),
Faculdade de Ci\^encias e Tecnologia,
Universidade Nova de Lisboa,
Monte da Caparica,
2829-516 Caparica,
Portugal;
e-mail: tmelo@adm.isel.pt.
}

\title{Presentations for three remarkable submonoids of the dihedral inverse monoid on a finite set} 

\author{I. Dimitrova, V\'\i tor H. Fernandes\footnote{This work is funded by national funds through the FCT - Funda\c c\~ao para a Ci\^encia e a Tecnologia, I.P., under the scope of the projects UIDB/00297/2020 and UIDP/00297/2020 (NOVA Math - Center for Mathematics and Applications).}~
J. Koppitz and T.M. Quinteiro\footnote{This work is funded by national funds through the FCT - Funda\c c\~ao para a Ci\^encia e a Tecnologia, I.P., under the scope of the projects UIDB/00297/2020 and UIDP/00297/2020 (NOVA Math - Center for Mathematics and Applications).}
}

%%%%%%%%%%%%%%%%%%%%%%%%%%%%%%%%%%%

\begin{document}

\maketitle

\begin{abstract}
In this paper we consider the inverse submonoids $\OPDI_n$, $\MDI_n$ and $\ODI_n$ 
of the dihedral inverse monoid $\DI_n$ of all orientation-preserving, monotone and order-preserving transformations, respectively.
Our goal is to exhibit presentations for each of these three monoids.
\end{abstract}

\medskip

\noindent{\small 2020 \it Mathematics subject classification: \rm 20M20, 20M05.}%, 05C12, 05C25.}

\noindent{\small\it Keywords: \rm dihedral inverse monoid, transformations, orientation, monotonicity, presentations.}

\section{Introduction and preliminaries}\label{presection} 

Let $\Omega_n$ be a finite set with $n$ elements ($n\in\N$),
say $\Omega_n=\{1,2,\ldots,n\}$. 
Denote by $\Sym_n$ the \textit{symmetric group} on $\Omega_n$,
i.e. the group (under composition of mappings) of all
permutations on $\Omega_n$, 
and by $\I_n$ the \textit{symmetric inverse monoid} on $\Omega_n$, i.e.
the inverse monoid (under composition of partial mappings) of all 
partial permutations on $\Omega_n$.  

From now on, suppose that $\Omega_n$ is a chain, e.g. $\Omega_n=\{1<2<\cdots<n\}$.

An element $\alpha\in\I_n$ is called \textit{order-preserving}
[\textit{order-reversing}] if $x\leqslant y$ implies $x\alpha\leqslant y\alpha$
[$x\alpha\geqslant y\alpha$], for all $x,y \in \dom(\alpha)$.
A partial permutation is said to be \textit{monotone} if it is order-preserving or order-reversing.  
We denote by $\POI_n$
the inverse submonoid of $\I_n$ of all order-preserving
partial permutations and by $\PODI_n$ the inverse submonoid of $\I_n$
of all monotone partial permutations. 

Let $s=(a_1,a_2,\ldots,a_t)$
be a sequence of $t$ ($t\geqslant0$) elements
from the chain $\Omega_n$.
We say that $s$ is \textit{cyclic}
[\textit{anti-cyclic}] if there
exists no more than one index $i\in\{1,\ldots,t\}$ such that
$a_i>a_{i+1}$ [$a_i<a_{i+1}$],
where $a_{t+1}$ denotes $a_1$. 
We also say that $s$ is \textit{oriented} if $s$ is cyclic or $s$ is anti-cyclic. %(see \cite{Catarino&Higgins:1999,Higgins&Vernitski:2022,McAlister:1998}). 
Given a partial permutation $\alpha\in\I_n$ such that
$\dom(\alpha)=\{a_1<\cdots<a_t\}$, with $t\geqslant0$, we
say that $\alpha$ is \textit{orientation-preserving}
[\textit{orientation-reversing}, \textit{oriented}] if the sequence of its images
$(a_1\alpha,\ldots,a_t\alpha)$ is cyclic [anti-cyclic, oriented].  
We denote by $\POPI_n$ the inverse submonoid of $\I_n$
of all orientation-preserving partial 
permutations and by $\PORI_n$ the inverse 
submonoid of $\I_n$ of all
oriented partial permutations.

Notice that, by definition, we have $\POI_n\subseteq\PODI_n\subseteq\PORI_n$ and $\POI_n\subseteq\POPI_n\subseteq\PORI_n$.

For a long time, the monoids $\POI_n$, $\PODI_n$, $\POPI_n$ and $\PORI_n$ have aroused interest in the second author, several of his co-authors, as well as various other authors. 
We can certainly say that the structure of these four inverse monoids is well known.
Green's relations, cardinalities, ideals, congruences, generators and ranks, maximal subsemigroups, presentations, automorphisms and endomorphisms, etc., were already determined (see, for example, 
\cite{Araujo&al:2011, 
Delgado&Fernandes:2000,
East:2006, 
Fernandes:2000,
Fernandes:2001,
%Fernandes:2002survey,
Fernandes&Gomes&Jesus:2004,
Fernandes&Gomes&Jesus:2005,
Fernandes&Santos:2019,
Ganyushkin&Mazorchuk:2003, 
Li&Fernandes:2023}).
On the other hand, regarding pseudovarieties generated by the families of these semigroups, despite some advances (see, for example, 
\cite{Cowan&Reilly:1995,Fernandes:1997,Fernandes:1998,Fernandes:2001b,Fernandes:2008}), we have to say that much still remains unknown.

\smallskip

Next, let us consider the following permutations of $\Omega_n$ of order $n$ and $2$ (for $n\geqslant2$), respectively:
$$
g=\begin{pmatrix}
1&2&\cdots&n-1&n\\
2&3&\cdots&n&1
\end{pmatrix}
\quad\text{and}\quad
h=\begin{pmatrix}
1&2&\cdots&n-1&n\\
n&n-1&\cdots&2&1
\end{pmatrix}.
$$
It is clear that $g,h\in\PORI_n$.
Moreover, for $n\geqslant3$, $g$ together with $h$ generates the well-known \textit{dihedral group} $\D_{2n}$ of order $2n$ 
(considered as a subgroup of $\Sym_n$). In fact, we have 
$$
\D_{2n}=\langle g,h\mid g^n=1,h^2=1, hg=g^{n-1}h\rangle=\{\id,g,g^2,\ldots,g^{n-1}, h,hg,hg^2,\ldots,hg^{n-1}\}, 
$$
where $\id$ denotes the identity mapping on $\Omega_n$. 
Observe that, 
%$$
%g^k=\begin{pmatrix} 
%1&2&\cdots&n-k&n-k+1&\cdots&n\\
%1+k&2+k&\cdots&n&1&\cdots&k
%\end{pmatrix}, 
%\quad\text{i.e.}\quad  
%ig^k=\left\{\begin{array}{ll}
%i+k & \mbox{if $1\leqslant i\leqslant n-k$}\\
%i+k-n & \mbox{if $n-k+1\leqslant i\leqslant n$,}  
%\end{array}\right.
%$$
%and 
%$$
%hg^k=\begin{pmatrix} 
%1&\cdots&k&k+1&\cdots&n\\
%k&\cdots&1&n&\cdots&k+1
%\end{pmatrix}, 
%\quad\text{i.e.}\quad  
%ihg^k=\left\{\begin{array}{ll}
%k-i+1 & \mbox{if $1\leqslant i\leqslant k$}\\
%n+k-i+1 & \mbox{if $k+1\leqslant i\leqslant n$,} 
%\end{array}\right.
%$$
%for $0\leqslant k\leqslant n-1$. 
for $n\in\{1,2\}$, the dihedral group $\D_{2n}$ of order $2n$ %(also known as the Klein four-group for n = 2 ) 
cannot be embedded in $\Sym_n$. 

Let also $\C_n$ be the \textit{cyclic group} of order $n$ generated by $g$, i.e. 
$
\C_n=\langle g\mid g^n=1\rangle=\{\id,g,g^2,\ldots,g^{n-1}\}.
$

\smallskip 

Now, observe that the elements of $\I_n$ are precisely all restrictions of permutations on $\Omega_n$. 
For $n\geqslant3$, if we consider only restrictions of permutations on $\Omega_n$ that belong to the dihedral group $\D_{2n}$, 
we obtain the inverse submonoid $\DI_n$ of $\I_n$ named in \cite{Fernandes&Paulista:2022sub} by \textit{dihedral inverse monoid on $\Omega_n$}. 
Notice that, it is clear that, for any subgroup $G$ of $\Sym_n$, the set $\I_n(G)$ of all restrictions of elements of $G$ forms an inverse submonoid of $\I_n$ 
whose group of units is precisely $G$. 
In the aforementioned paper, Fernandes and Paulista studied the monoid $\DI_n$ by determining its cardinality and rank as well as descriptions of its Green's relations and, furthermore, presentations for $\DI_n$. Observe that, as $g,h\in\PORI_n$, we have $\DI_n\subseteq\PORI_n$. 

In this paper, we consider 
submonoids of $\DI_n$ that arise when we consider all orientation-preserving, monotone or order-preserving elements of $\DI_n$, 
i.e. the following three inverse submonoids of $\DI_n$: 
$\OPDI_n=\DI_n\cap\POPI_n$, $\MDI_n=\DI_n\cap\PODI_n$ and $\ODI_n=\DI_n\cap\POI_n$. 
Observe that $\ODI_n\subseteq\MDI_n$ and $\ODI_n\subseteq\OPDI_n$. 
These three monoids  were also studied in \cite{Dimitrova&al:2022} by the authors 
who characterized their Green's relations and calculated their cardinals and ranks. 
Here, %for $n\geqslant 4$, 
we aim to determine presentations for the monoids $\ODI_n$, $\MDI_n$ and $\OPDI_n$.  
Notice that, as $\ODI_3=\POI_3$, $\OPDI_3=\POPI_3$ and $\MDI_3=\PODI_3$, presentations for $n=3$ are already known 
(see \cite{Fernandes:2000,Fernandes:2001,Fernandes&Gomes&Jesus:2004}). 

\smallskip 

Throughout this paper, we take $n\geqslant4$.  

\smallskip

Until the end of this section, we remember some properties of $\ODI_n$, $\MDI_n$ and $\OPDI_n$, presented by the authors in \cite{Dimitrova&al:2022}, that we will need in the following sections.

\smallskip 

Recall that the \textit{rank} of a partial permutation $\alpha\in\I_n$ 
is the size of $\im(\alpha)$. On the other hand, the \textit{rank} of a monoid $M$ is the minimum size of a generating set of $M$.

\smallskip 

For $X\subseteq\Omega_n$, denote by $\id_X$ the partial identity with domain $X$, i.e. $\id_X=\id|_X$. 
Take 
$$
e_i=\id_{\Omega_n\setminus\{i\}}=
\begin{pmatrix} 1&\cdots&i-1&i+1&\cdots&n\\
1&\cdots&i-1&i+1&\cdots&n
\end{pmatrix}\in\DI_n,
$$
for $1\leqslant i\leqslant n$. Clearly, for $1\leqslant i,j\leqslant n$, we have $e_i^2=e_i$ and $e_ie_j=\id_{\Omega_n\setminus\{i,j\}}=e_je_i$.
More generally, for any $X\subseteq\Omega_n$, we get $\Pi_{i\in X}e_i=\id_{\Omega_n\setminus X}$.

Since the elements of $\DI_n$ are precisely the restrictions of $\D_{2n}$, it is easy to conclude that 
$
\{g,h,e_1,e_2,\ldots,e_n\}
$
is a generating set of $\DI_n$. Moreover, since $g^n=1$ and $e_i=g^{n-i}e_ng^i$ for all $i\in\{1,2,\ldots,n\}$,
it follows that each set $\{g,h,e_i\}$, with $1\leqslant i\leqslant n$, also generates $\DI_n$ 
(see \cite{Dimitrova&al:2022,Fernandes&Paulista:2022sub}).

Notice that $g\in\OPDI_n$, $h\in\MDI_n$ and $e_1,e_2,\ldots,e_n$ are elements of $\ODI_n$, $\MDI_n$ and $\OPDI_n$.

Consider the elements
$$
x=\begin{pmatrix}
1&2&\cdots&n-1\\
2&3&\cdots&n
\end{pmatrix}
\quad\text{and}\quad
y=x^{-1}=\begin{pmatrix}
2&3&\cdots&n\\
1&2&\cdots&n-1
\end{pmatrix} 
$$
of $\ODI_n$ with rank $n-1$ and the elements
$$
x_i=\begin{pmatrix}
1&1+i\\
1&n-i+1
\end{pmatrix}
\quad\text{and}\quad
y_i=x_i^{-1}=\begin{pmatrix}
1&n-i+1\\
1&1+i
\end{pmatrix},
$$
for $1\leqslant i\leqslant\lfloor\frac{n-1}{2}\rfloor$, of $\ODI_n$ with rank $2$.

The following result was proved by the authors in \cite{Dimitrova&al:2022}: 

\begin{proposition}[{\cite[Proposition 4.1 and Theorem 4.3]{Dimitrova&al:2022}}]\label{gensetsrank}
For $n\geqslant4$, 
$$
\{x,y,e_2,\ldots,e_{n-1},x_1,x_2,\ldots,x_{\lfloor\frac{n-1}{2}\rfloor},y_1,y_2,\ldots,y_{\lfloor\frac{n-1}{2}\rfloor}\}, 
$$
$$
\{h,x,e_2,\ldots,e_{\lfloor\frac{n+1}{2}\rfloor},x_1,x_2,\ldots,x_{\lfloor\frac{n-1}{2}\rfloor},y_1,y_2,\ldots,y_{\lfloor\frac{n-1}{2}\rfloor}\}
$$
and 
$$
\{g,e_i,x_1,x_2,\ldots,x_{\lfloor\frac{n-1}{2}\rfloor}\}, 
\quad \mbox{with $1\leqslant i\leqslant n$,}
$$
are generating sets of minimum size of the monoids $\ODI_n$, $\MDI_n$ and $\OPDI_n$, respectively. 
In particular, the monoids $\ODI_n$, $\MDI_n$ and $\OPDI_n$ have ranks  
$n+2\lfloor\frac{n-1}{2}\rfloor$, $2+3\lfloor\frac{n-1}{2}\rfloor$ and $2+\lfloor\frac{n-1}{2}\rfloor$, 
respectively. 
\end{proposition}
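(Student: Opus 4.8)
The plan is to establish the two claims separately: that each displayed set generates the monoid in question, and that every generating set of that monoid has at least the stated cardinality; together these give both the ``generating sets of minimum size'' assertion and the values of the ranks. Throughout I would rely on the descriptions of Green's relations and of the $\mathcal{J}$-class (equivalently, rank) structure of $\ODI_n$, $\MDI_n$ and $\OPDI_n$ obtained in \cite{Dimitrova&al:2022}, together with the fact that the groups of units are trivial for $\ODI_n$, equal to $\{\id,h\}$ for $\MDI_n$, and equal to $\C_n=\langle g\rangle$ for $\OPDI_n$.

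For the generation part, I would first check that all listed elements indeed lie in the relevant monoid (for instance, that $x,y,x_i,y_i$ are order-preserving, $h$ is monotone and $g$ is orientation-preserving), which is routine from the definitions. To prove generation, I would argue by induction on rank, showing that every order-preserving (respectively, monotone, orientation-preserving) restriction of a dihedral permutation is a product of the listed generators. The key reductions are: the group of units is recovered first ($g$ alone for $\OPDI_n$, $h$ for $\MDI_n$); a single partial identity suffices once $g$ is available, since $e_i=g^{n-i}e_ng^i$; in $\MDI_n$ one recovers $y=hxh$ and the missing identities $e_i=he_{n+1-i}h$ from the reflection $h$; the shifts, being restrictions of rotations (e.g. $x=e_ng$, $y=e_1g^{n-1}$), come from $g$ and one $e_i$; and the identities $e_1,e_n$ arise as $yx$ and $xy$. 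The remaining reflection-type elements of small rank are then produced by multiplying higher-rank elements by the rank-$2$ generators $x_i,y_i$ and by partial identities.

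For minimality, the first observation is that in a finite inverse submonoid of $\I_n$ the rank of a product is at most the minimum of the ranks of its factors, so a product is a unit only if every factor is a unit; hence the units occurring in any generating set must themselves generate the group of units, forcing $0$, $1$ and $1$ generator(s) for $\ODI_n$, $\MDI_n$ and $\OPDI_n$ respectively. The decisive part is then a relative-rank argument proceeding downward through the chain of $\mathcal{J}$-classes: since rank cannot increase along products, each element of a given rank needs a generator of at least that rank, and, using the known Green structure, one shows that reaching a $\mathcal{J}$-class requires at least one generator in every orbit of the group of units acting on its $\mathcal{R}$-classes (dually, its $\mathcal{L}$-classes). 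The rotation action of $\C_n$ in $\OPDI_n$ and the reflection action of $\langle h\rangle$ in $\MDI_n$ collapse the relevant families of idempotents and of rank-$2$ $\mathcal{H}$-classes into exactly $\lfloor\frac{n-1}{2}\rfloor$ orbits, whereas the trivial group of $\ODI_n$ collapses nothing; adding the unit contribution yields $n+2\lfloor\frac{n-1}{2}\rfloor$, $2+3\lfloor\frac{n-1}{2}\rfloor$ and $2+\lfloor\frac{n-1}{2}\rfloor$.

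I expect the main obstacle to be precisely this orbit-counting lower bound at the rank-$2$ level: one must show that no product of the units with fewer than $\lfloor\frac{n-1}{2}\rfloor$ rank-$2$ elements can manufacture all the required rank-$2$ $\mathcal{H}$-classes. This calls for an explicit description of the rank-$2$ elements that are \emph{indecomposable} (not expressible using only strictly higher-rank elements together with a smaller family of rank-$2$ generators), followed by a verification that the group of units has exactly $\lfloor\frac{n-1}{2}\rfloor$ orbits on them. For $\ODI_n$ and $\MDI_n$, the analogous indispensability of the $e_i$ and of the shift(s) at rank $n-1$ is of the same nature and would be settled by the same bookkeeping.
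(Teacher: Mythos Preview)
The paper does not prove this proposition at all: it is quoted verbatim from the companion paper \cite{Dimitrova&al:2022} (as Proposition~4.1 and Theorem~4.3 there), with the explicit lead-in ``The following result was proved by the authors in \cite{Dimitrova&al:2022}''. There is therefore no proof in the present paper against which to compare your proposal.

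Your sketch is a plausible outline of the kind of argument one would expect in the cited paper --- generation by induction on rank together with a lower bound via indecomposable elements and orbit counting under the group of units --- but whether it matches the actual proof in \cite{Dimitrova&al:2022} cannot be determined from the text supplied here. If the goal was to reproduce or replace the proof that this paper relies on, you would need to consult that reference directly; within the present paper the statement is treated as a black-box import.
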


Observe that $n+2\lfloor\frac{n-1}{2}\rfloor=2n-\frac{3+(-1)^n}{2}$. 

\medskip 

Finally, we recall also that in \cite[Theorem 2.1 and Theorem 2.3]{Dimitrova&al:2022} the authors showed the following equalities:  
\begin{equation}\label{cardodi} 
|\ODI_n|= 3\cdot2^n+\frac{(n+1)n(n-1)}{6}-\frac{1+(-1)^n}{8}n^2-2n-2 
\end{equation}
and 
\begin{equation}\label{cardmdi} 
|\MDI_n|=
3\cdot2^{n+1}+\frac{(n+1)n(n-1)}{3} -\frac{5+(-1)^n}{4}n^2 -4n-5. 
\end{equation}

\medskip 

For general background on Semigroup Theory and standard notations, we refer to Howie's book \cite{Howie:1995}.

\smallskip

We would like to point out that we made considerable use of computational tools, namely GAP \cite{GAP4}.

\section{A note on presentations}\label{presentations} 

In this section, we recall some notions related to the concept of a monoid presentation.

\smallskip

Let $A$ be an alphabet and consider the free monoid $A^*$ generated by $A$.
The elements of $A$ and of $A^*$ are called \textit{letters} and \textit{words}, respectively.
The empty word is denoted by $1$ and we write $A^+$ to express $A^*\setminus\{1\}$. 
For all $u\in A^*$, the power $u^0$ also denotes the empty word. 
A pair $(u,v)$ of $A^*\times A^*$ is called a
\textit{relation} of $A^*$ and it is usually represented by $u=v$. 
A \textit{monoid presentation} is an ordered pair
$\langle A\mid R\rangle$, where $R\subseteq A^*\times A^*$ is a set of relations of
the free monoid $A^*$. 
A monoid $M$ is said to be
\textit{defined by a presentation} $\langle A\mid R\rangle$ if $M$ is
isomorphic to $A^*/\rho_R$, where $\rho_R$ denotes the smallest
congruence on $A^*$ containing $R$. 
A relation $u=v$ of $A^*$ is said to be a \textit{consequence} of $R$ if $(u,v)\in\rho_R$. 
A set of representatives $W\subseteq A^*$ for the congruence $\rho_R$ is also called a 
\textit{set of forms} for the presentation $\langle A\mid R\rangle$. 
Let $X$ be a generating set of a monoid $M$ and let $\phi: A\longrightarrow M$ be an injective mapping
such that $A\phi=X$.
Let $\varphi: A^*\longrightarrow M$ be the (surjective) homomorphism of monoids that extends $\phi$ to $A^*$.
We say that $X$ satisfies (via $\varphi$) a relation $u=v$ of $A^*$ if $u\varphi=v\varphi$. 
For more details see
\cite{Lallement:1979} or \cite{Ruskuc:1995}.
A direct method to find a presentation for a monoid
is described by the following well-known result (e.g.  see \cite[Proposition 1.2.3]{Ruskuc:1995}).

\begin{proposition}\label{provingpresentation}
Let $M$ be a monoid generated by a set $X$, let $A$ be an alphabet
and let $\phi: A\longrightarrow M$ be an injective mapping
such that $A\phi=X$.
Let $\varphi:A^*\longrightarrow M$ be the (surjective) homomorphism
that extends $\phi$ to $A^*$ and let $R\subseteq A^*\times A^*$.
Then $\langle A\mid R\rangle$ is a presentation for $M$ if and only
if the following two conditions are satisfied:
\begin{enumerate}
\item
The generating set $X$ of $M$ satisfies (via $\varphi$) all relations from $R$;
\item
If $w_1,w_2\in A^*$ are any two words such that
the generating set $X$ of $M$ satisfies (via $\varphi$) the relation $w_1=w_2$ then $w_1=w_2$ is a consequence of $R$.
\end{enumerate}
\end{proposition}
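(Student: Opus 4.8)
The plan is to reduce the whole statement to a comparison of two congruences on $A^*$, namely $\rho_R$ and the kernel congruence $\ker\varphi=\{(w_1,w_2)\in A^*\times A^*: w_1\varphi=w_2\varphi\}$ of the surjective homomorphism $\varphi$. Since $\varphi$ is onto, the first isomorphism theorem for monoids gives $A^*/\ker\varphi\cong M$. Because $\langle A\mid R\rangle$ being a ``presentation for $M$'' is here understood on the generating set $X$, i.e. via the isomorphism induced by $\varphi$, the core of the argument is to show that conditions (1) and (2) hold if and only if $\rho_R=\ker\varphi$, and then to read off the equivalence with $A^*/\rho_R\cong M$.

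First I would handle condition (1). By definition, $X$ satisfies (via $\varphi$) a relation $u=v$ precisely when $u\varphi=v\varphi$, that is, when $(u,v)\in\ker\varphi$; hence (1) says exactly $R\subseteq\ker\varphi$. Now $\ker\varphi$ is a congruence on $A^*$ and $\rho_R$ is the smallest congruence containing $R$, so $R\subseteq\ker\varphi$ is equivalent to $\rho_R\subseteq\ker\varphi$ (the forward implication uses minimality, the reverse uses $R\subseteq\rho_R$). Equivalently, (1) is precisely the condition under which $\varphi$ factors through the natural projection $A^*\to A^*/\rho_R$, yielding a well-defined, and automatically surjective, homomorphism $\overline{\varphi}:A^*/\rho_R\to M$ given by $(w\rho_R)\overline{\varphi}=w\varphi$.

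Next I would treat condition (2). Spelling out the definitions, (2) asserts that whenever $w_1\varphi=w_2\varphi$ one has $(w_1,w_2)\in\rho_R$; that is, $\ker\varphi\subseteq\rho_R$. Assuming (1), this is exactly the statement that $\overline{\varphi}$ is injective. Combining the two paragraphs, (1) and (2) hold together if and only if $\rho_R\subseteq\ker\varphi$ and $\ker\varphi\subseteq\rho_R$, i.e. $\rho_R=\ker\varphi$, which in turn is equivalent to $\overline{\varphi}$ being an isomorphism and hence to $A^*/\rho_R\cong M$. This gives the ``if'' direction cleanly: verifying (1) and (2) forces $\rho_R=\ker\varphi$, and the conclusion follows from the first isomorphism theorem.

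The step I expect to be the main obstacle is the ``only if'' direction, which is where one must be careful about what ``presentation for $M$'' means. An abstract isomorphism $A^*/\rho_R\cong M$ alone does not pin down $\rho_R$: distinct surjections $A^*\to M$ have distinct kernels yet all yield quotients isomorphic to $M$. The point to nail down is that the presentation is taken on the generating set $X$, so the relevant isomorphism is the one induced by $\varphi$; under this reading, being a presentation means precisely that $\overline{\varphi}$ is well defined and injective, which returns (1) and (2). I would therefore make the compatibility with $\varphi$ explicit at the outset and phrase the equivalence throughout in terms of the single map $\overline{\varphi}$, so that both directions reduce to the assertion that $\overline{\varphi}$ is an isomorphism.
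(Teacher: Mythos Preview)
Your proof is correct and is the standard argument via the first isomorphism theorem, reducing everything to the equality $\rho_R=\ker\varphi$. You are also right to flag the subtlety in the ``only if'' direction: the paper's definition of ``defined by a presentation'' asks only for an abstract isomorphism $M\cong A^*/\rho_R$, which in principle does not force $\rho_R=\ker\varphi$; your resolution (reading the presentation as being \emph{on the given generating set $X$}, so that the relevant isomorphism is the one induced by $\varphi$) is the intended one.

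There is, however, nothing to compare against: the paper does not prove this proposition. It is stated as a well-known result with a reference to Ru\v{s}kuc's thesis (Proposition~1.2.3 there), and no argument is given in the paper itself. So your write-up stands on its own as a complete proof where the paper offers none.
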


\smallskip

An usual method to find a presentation for a finite monoid
is described by the following result (adapted to the monoid case from
\cite[Proposition 3.2.2]{Ruskuc:1995}). 

\begin{proposition}[Guess and Prove method] \label{ruskuc} 
Let $M$ be a finite monoid generated by a set $X$, 
let $A$ be an alphabet and let $\phi: A\longrightarrow M$ be an injective mapping
such that $A\phi=X$. 
Let $\varphi:A^*\longrightarrow M$ be the (surjective) homomorphism
that extends $\phi$ to $A^*$, let $R\subseteq A^*\times A^*$ 
and $W\subseteq A^*$. Assume that the following conditions are
satisfied:
\begin{enumerate}
\item The generating set $X$ of $M$ satisfies (via $\varphi$) all relations from $R$;
\item For each word $w\in X^*$, there exists a word $w'\in W$ such
that the relation $w=w'$ is a consequence of $R$;
\item $|W|\leqslant |M|$.
\end{enumerate}
Then, $M$ is defined by the presentation $\langle A\mid R\rangle$. 
\end{proposition}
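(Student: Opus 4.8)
The plan is to deduce the statement from Proposition~\ref{provingpresentation}, whose hypothesis~(1) coincides verbatim with hypothesis~(1) here. Thus it suffices to verify condition~(2) of Proposition~\ref{provingpresentation}: that whenever $w_1,w_2\in A^*$ satisfy $w_1\varphi=w_2\varphi$, the relation $w_1=w_2$ is a consequence of $R$, i.e. $(w_1,w_2)\in\rho_R$. The whole argument will hinge on a cardinality count, and this is exactly the place where the finiteness of $M$ is indispensable.

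First I would record the easy inclusion. Since $X$ satisfies (via $\varphi$) every relation of $R$, we have $u\varphi=v\varphi$ for all $(u,v)\in R$; hence $R$ is contained in the kernel congruence $\ker\varphi=\{(u,v)\in A^*\times A^*:u\varphi=v\varphi\}$. As $\rho_R$ is the \emph{smallest} congruence on $A^*$ containing $R$, this yields $\rho_R\subseteq\ker\varphi$. Because $\varphi$ is surjective, it factors through the quotient and induces a surjective homomorphism $\overline\varphi\colon A^*/\rho_R\longrightarrow M$.

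Next I would count the $\rho_R$-classes. Identifying $A^*$ with $X^*$ via the bijection $\phi$, hypothesis~(2) asserts that every word of $A^*$ is $\rho_R$-related to some word of $W$; equivalently, the composite $W\hookrightarrow A^*\twoheadrightarrow A^*/\rho_R$ is onto. Therefore $|A^*/\rho_R|\leqslant|W|$, and combining this with hypothesis~(3) gives $|A^*/\rho_R|\leqslant|W|\leqslant|M|$. On the other hand, the surjectivity of $\overline\varphi$ forces $|A^*/\rho_R|\geqslant|M|$. Since $M$ is finite, all these quantities are finite and we conclude $|A^*/\rho_R|=|M|$; a surjection between finite sets of equal cardinality is a bijection, so $\overline\varphi$ is an isomorphism and, in particular, $\ker\varphi=\rho_R$.

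Finally, the equality $\ker\varphi=\rho_R$ is precisely condition~(2) of Proposition~\ref{provingpresentation}: if $w_1\varphi=w_2\varphi$ then $(w_1,w_2)\in\ker\varphi=\rho_R$, so $w_1=w_2$ is a consequence of $R$. Proposition~\ref{provingpresentation} then gives that $\langle A\mid R\rangle$ is a presentation for $M$. The only genuine obstacle is the counting step: one must be sure that hypothesis~(2) really does bound the number of congruence classes \emph{from above}, and that finiteness is what upgrades the surjection $\overline\varphi$ to a bijection. Neither the bound $|A^*/\rho_R|\leqslant|M|$ alone nor the passage from surjectivity to injectivity survives when $M$ is infinite, which is exactly why the proposition is restricted to finite monoids.
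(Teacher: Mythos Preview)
Your argument is correct and is the standard proof of the Guess and Prove method: from hypothesis~(1) you get $\rho_R\subseteq\ker\varphi$ and hence a surjection $A^*/\rho_R\to M$; hypothesis~(2) bounds $|A^*/\rho_R|\leqslant|W|$; hypothesis~(3) then gives $|A^*/\rho_R|\leqslant|M|$, and finiteness turns the surjection into a bijection. There is nothing to compare against here, since the paper does not supply its own proof of this proposition --- it is quoted (without proof) as an adaptation of \cite[Proposition~3.2.2]{Ruskuc:1995}, and is used as a black box in Sections~\ref{presodi} and~\ref{presmdi}.
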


Notice that, if $W$ satisfies the above conditions then, in fact,
$|W|=|M|$ and $W$ is a set of forms for $\langle A\mid R\rangle$.  

\smallskip

Given a presentation for a monoid, another method to find a new
presentation consists in applying Tietze transformations. For a
monoid presentation $\langle A\mid R\rangle$, the
four \emph{elementary Tietze transformations} are:

\begin{description}
\item(T1)
Adding a new relation $u=v$ to $\langle A\mid R\rangle$,
provided that $u=v$ is a consequence of $R$;
\item(T2)
Deleting a relation $u=v$ from $\langle A\mid R\rangle$,
provided that $u=v$ is a consequence of $R\backslash\{u=v\}$;
\item(T3)
Adding a new generating symbol $b$ and a new relation $b=w$, where
$w\in A^*$;
\item(T4)
If $\langle A\mid R\rangle$ possesses a relation of the form
$b=w$, where $b\in A$, and $w\in(A\backslash\{b\})^*$, then
deleting $b$ from the list of generating symbols, deleting the
relation $b=w$, and replacing all remaining appearances of $b$ by
$w$.
\end{description}

The following result is well-known (e.g. see \cite{Ruskuc:1995}):

\begin{proposition} \label{tietze}
Two finite presentations define the same monoid if and only if one
can be obtained from the other by a finite number of elementary
Tietze transformations $(T1)$, $(T2)$, $(T3)$ and $(T4)$.
\end{proposition}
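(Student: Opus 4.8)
The plan is to establish the two implications separately: the \emph{soundness} direction, that applying any finite sequence of Tietze transformations to a presentation yields a presentation of an isomorphic monoid, and the harder \emph{completeness} direction, that any two finite presentations of isomorphic monoids are linked by such a sequence. Throughout, I would use Proposition~\ref{provingpresentation} as the main tool for certifying that a given relation is a consequence of a set of relations.

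For soundness it suffices to check that each of (T1)--(T4) preserves the defined monoid up to isomorphism, since the four transformations can then be composed. Transformations (T1) and (T2) do not alter $R$ up to consequence, hence leave the congruence $\rho_R$ and the quotient $A^*/\rho_R$ unchanged; this is immediate from the definition of consequence. Since (T4) is the inverse operation of (T3), I would only need to treat (T3). Starting from a presentation $\langle A\mid R\rangle$ of $M$ and adding a fresh symbol $b$ together with a relation $b=w$ (where $w\in A^*$), I would extend $\varphi$ to $\bar\varphi\colon (A\cup\{b\})^*\to M$ by setting $b\bar\varphi=w\varphi$; this is still surjective and satisfies $R\cup\{b=w\}$. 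To verify condition~2 of Proposition~\ref{provingpresentation}, given a relation $w_1=w_2$ satisfied by the new generators, I would replace every occurrence of $b$ by $w$ to obtain words $\tilde w_1,\tilde w_2\in A^*$; each $w_i=\tilde w_i$ is a consequence of $b=w$, while $\tilde w_1=\tilde w_2$ is satisfied in $M$ over $A$, hence a consequence of $R$ by Proposition~\ref{provingpresentation} applied to the original presentation. Combining, $w_1=w_2$ is a consequence of $R\cup\{b=w\}$, so the enlarged presentation again defines $M$.

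For completeness, suppose $\langle A\mid R\rangle$ and $\langle B\mid S\rangle$ both define $M$, with associated surjections $\varphi\colon A^*\to M$ and $\psi\colon B^*\to M$. After renaming (itself realizable by a (T3) followed by a (T4)), I may assume $A\cap B=\emptyset$. For each $b\in B$ fix a word $w_b\in A^*$ with $w_b\varphi=b\psi$, and for each $a\in A$ fix $u_a\in B^*$ with $u_a\psi=a\varphi$. I would aim for the common presentation $\langle A\cup B\mid R\cup S\cup\{b=w_b:b\in B\}\cup\{a=u_a:a\in A\}\rangle$, transforming $\langle A\mid R\rangle$ into it as follows: apply (T3) once for each $b\in B$ to reach $\langle A\cup B\mid R\cup\{b=w_b:b\in B\}\rangle$, which still defines $M$ by the soundness step. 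In this presentation the generators realize $a\mapsto a\varphi$ and $b\mapsto b\psi$, so every relation in $S$ and every relation $a=u_a$ is satisfied in $M$ and is therefore, by Proposition~\ref{provingpresentation}, a consequence of the current relations; I would add all of them by repeated use of (T1), arriving at the target presentation. A symmetric sequence takes $\langle B\mid S\rangle$ to that same presentation.

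Finally, I would invoke the reversibility of the elementary transformations — (T1) and (T2) are mutually inverse, as are (T3) and (T4) — to reverse the second sequence, obtaining a single finite chain of Tietze transformations carrying $\langle A\mid R\rangle$ to $\langle B\mid S\rangle$. I expect the main obstacle to lie in the completeness direction: specifically, in setting up the bridging presentation so that the cross-relations $b=w_b$ and $a=u_a$, together with $R$ and $S$, are genuinely consequences at each stage (which rests on first establishing that the intermediate presentations define $M$), and in handling the reduction to disjoint alphabets cleanly so that no circularity creeps into the argument.
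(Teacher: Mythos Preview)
The paper does not actually prove this proposition; it is quoted as a well-known fact with a reference to Ru\v{s}kuc's thesis. Your argument is the standard proof and is correct. One small imprecision: (T4) is not literally the inverse of (T3) --- undoing a (T4) requires a (T3) to reintroduce the deleted letter together with several (T1)/(T2) steps to restore the original unsubstituted relations --- but this does not affect your reversibility conclusion, since each elementary transformation is still undone by a finite sequence of the others.
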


\smallskip 

Next, we recall the process, given in \cite{Fernandes&Gomes&Jesus:2004}, % (also used in \cite{Fernandes&Quinteiro:2016}), 
to obtain a presentation for a finite monoid $M$ given a presentation for a certain
submonoid of $M$. This method will be applied in Section \ref{presmdi}. 

Let $M$ be a (finite) monoid, let $S$ be a submonoid of $M$ and $y$ be an
element of $M$ such that $y^2=1$. Let us suppose that $M$ is
generated by $S$ and $y$. Let $X=\{x_1,\ldots,x_k\}$ ($k\in\N$) be
a generating set of $S$. Then $Y=X\cup\{y\}$ generates $M$. 
Let $A=\{a_1,\ldots,a_k\}$ be an alphabet, let $b$ be a symbol not belonging to $A$ 
and $B=A\cup\{b\}$. 
Let $\varphi:B^*\longrightarrow M$ be the homomorphism
that extends the mapping $a_i\longmapsto x_i$, for $1\leqslant i\leqslant k$, and $b\longmapsto y$    
to $A^*$ and let $R\subseteq A^*\times A^*$ be such that $\langle A\mid R\rangle$ is a 
presentation for $S$. 
Consider a set of forms $W$ for 
$\langle A\mid R\rangle$ and suppose there exist two subsets $W_1$ and
$W_2$ of $W$ and a word $u_0\in A^*$
such that $W=W_1\cup W_2$ and $u_0$ is a factor of each word in $W_1$. 
Suppose there exist words
$v_0,v_1,\ldots,v_k\in A^*$ such that the following relations over
the alphabet $B$ are satisfied (via $\varphi$) by the generating set $Y$ of $M$:
\begin{description}
\item $(\bar R_1)$ $ba_i=v_ib$, for $1\leqslant i\leqslant k$;
\item $(\bar R_2)$ $u_0b=v_0$.
\end{description}

Observe that the relation (over the alphabet $B$)
\begin{description}
\item $(\bar R_0)$  $b^2=1$
\end{description}
is also satisfied (via $\varphi$) by the generating set $Y$ of $M$, by
hypothesis.

Let $\bar R=R\cup \bar R_0\cup \bar R_1\cup \bar R_2$ and
$\bar{W}=W\cup\{wb\mid w\in W_2\}\subseteq B^*$. 
Then, in \cite{Fernandes&Gomes&Jesus:2004}, Fernandes et al. proved: 

\begin{proposition}[{\cite[Theorem 2.4]{Fernandes&Gomes&Jesus:2004}}] \label{overpresentation}
If $W$ contains the empty word then $\bar{W}$
is a set of forms for the presentation 
$\langle B\mid \bar{R}\rangle$. Moreover, if $|\bar{W}|\leqslant |M|$ then
the monoid $M$ is defined by the presentation 
$\langle B\mid \bar{R}\rangle$. 
\end{proposition}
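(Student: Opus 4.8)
The plan is to read $\langle B\mid\bar R\rangle$ as the presentation obtained from the presentation $\langle A\mid R\rangle$ of $S$ by adjoining a new involutive generator $b$ (playing the role of $y$), and to analyse the monoid $N=B^*/\rho_{\bar R}$ it defines together with the canonical surjection $\psi\colon N\longrightarrow M$. This surjection exists precisely because $Y$ satisfies every relation of $\bar R$: the relations of $R$ hold since $\langle A\mid R\rangle$ presents $S$ and $X\subseteq Y$, while $\bar R_0$, $\bar R_1$ and $\bar R_2$ hold by hypothesis. Hence $|M|\leqslant|N|$, and the two assertions become, respectively, ``$\bar W$ is a transversal of $\rho_{\bar R}$, so that $|N|=|\bar W|$'' and ``the sandwich $|M|\leqslant|N|=|\bar W|\leqslant|M|$ forces $\psi$ to be an isomorphism''. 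Only the second uses the hypothesis $|\bar W|\leqslant|M|$, which is the instance of the Guess and Prove method of Proposition \ref{ruskuc} at work and explains why the set-of-forms claim is stated separately.

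The technical heart is a word reduction showing that every word of $B^*$ is $\rho_{\bar R}$-equivalent to a word of $\bar W$; this already yields $|N|\leqslant|\bar W|$. First I would derive, from $\bar R_0$ and $\bar R_1$ alone, the auxiliary relation $a_ib=bv_i$ (left-multiply $ba_i=v_ib$ by $b$ and use $b^2=1$ to get $a_i=bv_ib$, then right-multiply by $b$), which lets one slide $b$ leftwards as well as rightwards. Given an arbitrary word, I would use $\bar R_1$ to push all occurrences of $b$ to the right, collapse the resulting power of $b$ to exponent $0$ or $1$ by $\bar R_0$, and reduce the $A$-prefix to its form $w\in W$ by the relations $R$. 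If $w\in W_2$ the word is already $wb\in\bar W$; if $w\in W_1$, I would use that $u_0$ is a factor of $w$, write $w=pu_0q$, slide the trailing $b$ leftwards across $q$ by the auxiliary relation to expose the factor $u_0b$, apply $\bar R_2$ to replace it by $v_0$, and finish by reducing the resulting $b$-free word through $R$ into $W$. The hypothesis $1\in W$ is used to handle the degenerate cases (empty $A$-parts, and the form $b$ itself representing $y$) so that the procedure always terminates inside $\bar W$.

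For the transversal claim I would check that distinct words of $\bar W$ lie in distinct classes of $\rho_{\bar R}$. The $A$-words of $W$ stay separated because $\psi$ maps the submonoid $\langle A\rho_{\bar R}\rangle$ of $N$ onto $S$, while $R\subseteq\bar R$ gives a surjection $S\to\langle A\rho_{\bar R}\rangle$; the composite $S\to\langle A\rho_{\bar R}\rangle\to S$ is the identity on generators, so $\langle A\rho_{\bar R}\rangle\cong S$ and the transversal $W$ of $S$ remains a transversal inside $N$. Two words $wb,w'b$ with $w,w'\in W_2$ are separated by right-multiplying a putative equality by $b$ and using $b^2=1$, which brings it back to $w=w'$ inside $\langle A\rho_{\bar R}\rangle\cong S$. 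I expect the \textbf{main obstacle} to be the last case: ruling out a collision between a form $w\in W$ and a form $w'b$ with $w'\in W_2$, equivalently proving that no $W_2$-form is reducible to a $b$-free word. This is exactly the point where the delineation of $W_1$ by the factor $u_0$ and the single relation $\bar R_2$ must be shown to account for \emph{all} reductions that kill a $b$; handling it amounts to verifying that the normal-form function defined by the reduction above is well defined on $\rho_{\bar R}$-classes (a confluence check), and it is here that the empty-word hypothesis again plays its part. Once this is settled one has $|N|=|\bar W|$, proving the first assertion, and the cardinality sandwich then yields the second.
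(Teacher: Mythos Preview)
The paper does not contain a proof of this proposition: it is quoted verbatim as \cite[Theorem 2.4]{Fernandes&Gomes&Jesus:2004} and used as a black box in Section~\ref{presmdi}. There is therefore no ``paper's own proof'' to compare your attempt with.

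That said, your strategy is essentially the one used in the cited source, and your reduction argument (push $b$'s to the right via $\bar R_1$, collapse via $\bar R_0$, normalise the $A$-prefix via $R$, and if the result lies in $W_1$ slide $b$ left across the suffix until it meets $u_0$ and apply $\bar R_2$) is correct and is the real content. Your derivation of the auxiliary relation $a_ib\approx bv_i$ from $\bar R_0$ and $\bar R_1$ is exactly what is needed for that leftward slide.

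Where you are making life harder than necessary is in your step~(2). In this paper's terminology a \emph{set of forms} is a set meeting every $\rho_{\bar R}$-class (see the sentence following Proposition~\ref{ruskuc}: the conclusion $|W|=|M|$, hence transversality, is a \emph{consequence} of the three hypotheses, not part of the definition). So the first assertion of the proposition requires only your reduction step; the ``main obstacle'' you flag---ruling out a collision between some $w\in W$ and some $w'b$ with $w'\in W_2$---is not something you have to establish directly. The second assertion then follows immediately from Proposition~\ref{ruskuc}: condition~1 holds since $Y$ satisfies $\bar R$, condition~2 is your reduction, and condition~3 is the hypothesis $|\bar W|\leqslant|M|$. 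Transversality of $\bar W$ drops out \emph{a posteriori} from $|\bar W|=|M|$.
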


%\section{On the monoids $\ODI_n$, $\MDI_n$ and $\OPDI_n$}\label{part1} 

\section{Presentations for $\ODI_n$}\label{presodi}

In this section, we first determine a presentation for $\ODI_n$ on $2n+\frac{1-(-1)^n}{2}$ generators and, secondly, 
by using Tietze transformations, we deduce another presentation for $\ODI_n$ on $2n-\frac{3+(-1)^n}{2}$ generators. 

\smallskip 

Consider the alphabet $A=\{x,y,e_1,\ldots,e_n,x_1,\ldots,x_{\lfloor\frac{n-1}{2}\rfloor},y_1,\ldots,y_{\lfloor\frac{n-1}{2}\rfloor}\}$ and the set $R$ formed by the following monoid relations:  

\begin{description}

\item $(R_1)$ $e_{i}^{2}=e_{i}$,  for $1\leqslant i\leqslant n$; 

\item $(R_2)$ $xy=e_{n}$ and $yx=e_{1}$;

\item $(R_3)$ $xe_{1}=x$ and $e_{1}y=y$;

\item $(R_4)$ $e_ie_j=e_je_i$, for $1\leqslant i<j\leqslant n$;

\item $(R_5)$ $e_ix=xe_{i+1}$, for $1\leqslant i\leqslant n-1$;

\item $(R_6)$ $x_{i}y_{i}= e_{2}\cdots e_{i}e_{i+2}\cdots e_{n}$, $y_{i}x_{i}=e_{2}\cdots e_{n-i}e_{n-i+2}\cdots e_{n}$, 
for $1\leqslant i \leqslant \lfloor\frac{n-1}{2}\rfloor$; 

\item $(R_7)$ $x_{i}e_{j}=x_{i}$, $e_{j}y_{i}=y_{i}$, 
for $1\leqslant i \leqslant \lfloor\frac{n-1}{2}\rfloor$, $2\leqslant j\leqslant n$ and $j\ne n-i+1$;

\item $(R_8)$ $e_{j}x_{i}=x_{i}$, $y_{i}e_{j}=y_{i}$,
for $1\leqslant i \leqslant \lfloor\frac{n-1}{2}\rfloor$, $2\leqslant j\leqslant n$ and $j\ne i+1$; 

\item $(R_9)$ $e_{1}x_{i}=x_{i}e_{1}=x^{n-2i}e_{n-2i+1}\cdots e_{n-i}e_{n-i+2}\cdots e_n$, $e_{1}y_{i}=y_{i}e_{1}=y^{n-2i}e_1\cdots e_ie_{i+2}\cdots e_{2i}$, 
for $1\leqslant i \leqslant \lfloor\frac{n-1}{2}\rfloor$;

\item $(R_{10})$ $x_{i}e_{n-i+1}=e_{i+1}x_{i} = y_{i}e_{i+1}=e_{n-i+1}y_{i} = e_2\cdots e_n$,
for $1\leqslant i \leqslant \lfloor\frac{n-1}{2}\rfloor$;

\item $(R_{11})$ $xe_2\cdots e_n=e_1\cdots e_n$. 

\end{description}

Observe that $|R|=\frac{1}{2}(5n^2-(1+2(-1)^n)n+(-1)^{n+1}+5)$. 

Throughout Section \ref{presodi}, we represent the congruence $\rho_R$ of $A^*$ by $\approx$. 

\smallskip 

We aim to show that the monoid $\ODI_n$ is defined by the presentation $\langle A \mid R\rangle$. 
To this end, our strategy is to use Proposition \ref{ruskuc} together with the known presentation of a submonoid of $\ODI_n$ 
determined by Fernandes in \cite{Fernandes:2022sub}. 
Therefore, we begin to recall this presentation as well as some other auxiliary results that will also be useful to us here.

\smallskip 

Let us denote by $\CI_n$ the cyclic inverse monoid on $\Omega_n$, i.e. the inverse submonoid of the symmetric 
inverse monoid on $\Omega_n$ consisting of all restrictions of the cyclic group $\C_n$, and by 
$\OCI_n$ the submonoid of $\CI_n$ formed by all order-preserving elements of $\CI_n$.  
Clearly, $\OCI_n$ is also a submonoid of $\ODI_n$. 
Moreover, $\{x,y,e_1,\ldots,e_n\}$ is a generating set of $\OCI_n$ and $|\OCI_n|=3\cdot2^n-2n-2$ \cite[Theorem 1.4]{Fernandes:2022sub}. 
Furthermore, if $U$ is the set of relations $R_1$ to $R_5$ together with relation $R_{11}$ over the alphabet $C=\{x,y,e_1,\ldots,e_n\}$, we have:

\begin{proposition}[{\cite[Theorem 2.16]{Fernandes:2022sub}}]\label{firstpresoci} 
The monoid $\OCI_n$ is defined by the presentation $\langle C \mid U\rangle$ on $n+2$ generators and $\frac{1}{2}(n^2+3n+8)$ relations. 
\end{proposition}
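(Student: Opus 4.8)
The plan is to apply the Guess and Prove method (Proposition \ref{ruskuc}) to the generating set $C=\{x,y,e_1,\ldots,e_n\}$ of $\OCI_n$, taking the set of relations to be $U$ and exhibiting a set of forms $W\subseteq C^*$ with $|W|\leqslant|\OCI_n|=3\cdot2^n-2n-2$. The first hypothesis of Proposition \ref{ruskuc} is routine: one checks directly in $\I_n$ that the generators satisfy each relation of $U$. For instance, $e_i^2=e_i$ and $e_ie_j=e_je_i$ are immediate since the $e_i$ are commuting idempotents; $xy=e_n$, $yx=e_1$, $xe_1=x$ and $e_1y=y$ follow from $\dom(x)=\{1,\ldots,n-1\}$ and $\im(x)=\{2,\ldots,n\}$; the relation $e_ix=xe_{i+1}$ reflects that pre-restricting the shift $i\mapsto i+1$ is the same as post-restricting it; and $R_{11}$ holds because both sides equal the empty map $\id_\emptyset$.

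The heart of the proof is the choice of $W$. Recall that $\OCI_n$ consists of the order-preserving restrictions of the powers $g^k$ of the $n$-cycle $g$, and that $g^k|_D$ is order-preserving precisely when $D$ lies on one side of the wrap-around point, i.e. $D\subseteq\{1,\ldots,n-k\}$ or $D\subseteq\{n-k+1,\ldots,n\}$, the second case coinciding with a backward shift $g^{k-n}|_D$. Since $x^k=g^k|_{\{1,\ldots,n-k\}}$ and $y^k=g^{-k}|_{\{k+1,\ldots,n\}}$, I would take as forms: (i) the $2^n$ idempotent words $e_{i_1}\cdots e_{i_m}$ with $i_1<\cdots<i_m$, representing $\id_{\Omega_n\setminus\{i_1,\ldots,i_m\}}$; (ii) words $x^k\,u$, for $1\leqslant k\leqslant n-1$, where $u$ ranges over the $2^{n-k}-1$ canonical idempotent words encoding a \emph{nonempty} domain inside $\{1,\ldots,n-k\}$; and (iii) the symmetric family $v\,y^k$ of backward forms. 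The nonemptiness condition excludes the empty map, which is already represented by $e_1\cdots e_n$ among the idempotents.

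To verify the second hypothesis of Proposition \ref{ruskuc}, I would show that every word of $C^*$ reduces, modulo $U$, to one of $W$. Using $R_1$ and $R_4$ one rewrites any block of $e$'s into an increasing product of distinct letters; using $R_5$ (and, via $R_2$, its analogue for sliding $e$'s across $y$) one moves all $e$'s to one side of the maximal powers of $x$ and $y$; using $R_2$ one cancels adjacent $xy$ and $yx$ into idempotents, so that after finitely many steps no surviving word mixes $x$ and $y$; and using $R_3$ one absorbs the spurious $e_1$. What remains is a single power $x^k$ or $y^k$ carrying an idempotent word, which is brought to the chosen canonical shape. Whenever the accompanying domain has collapsed, the relation $R_{11}$ (together with $R_2$--$R_5$) forces the word down to the empty-map idempotent $e_1\cdots e_n$, so that no shifted form with empty domain persists.

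Counting the forms then gives $|W|=2^n+2\,(2^n-n-1)=3\cdot2^n-2n-2=|\OCI_n|$, while the number of relations in $U$ is $n+2+2+\binom{n}{2}+(n-1)+1=\tfrac12(n^2+3n+8)$; Proposition \ref{ruskuc} then yields that $\langle C\mid U\rangle$ defines $\OCI_n$ on $n+2$ generators. I expect the main obstacle to be exactly this second hypothesis: designing $W$ so that the rewriting always terminates inside $W$ and no two distinct forms are identified. The delicate point is the wrap-around relation $R_{11}$, which is precisely what glues the $x$-side and the $y$-side together at the empty map and thereby prevents the shifted forms from overcounting; confirming that \emph{every} collapse of a domain is captured by $R_{11}$, rather than leaving an unreduced residue, is where the care lies.
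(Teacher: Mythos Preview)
This proposition is not proved in the present paper but quoted from \cite{Fernandes:2022sub}, so there is no proof here to compare against; however, the strategy of that reference is visible through Lemma \ref{leftxy} (quoted as \cite[Lemma 2.15]{Fernandes:2022sub}), and your approach via the Guess and Prove method with normal forms $z^r v$ ($z\in\{x,y\}$, $v$ a canonical idempotent word) is exactly that strategy, with the correct count $2^n+2(2^n-n-1)=3\cdot2^n-2n-2$. One cosmetic point: Lemma \ref{leftxy} puts the idempotent block on the \emph{right} in both the $x$- and $y$-cases, so your family (iii) should read $y^k v$ rather than $v y^k$; this is harmless for the argument but aligns with the cited lemma and with the absorption relations in (\ref{powers}).
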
 

The following lemma was crucial to prove the above result. Also here, it will be very useful. 

\begin{lemma}[{\cite[Lemma 2.15]{Fernandes:2022sub}}]\label{leftxy}
Let $u\in C^*$. 
Then, there exist $z\in\{x,y\}$, $v\in\{e_1,\ldots, e_n\}^*$ and $0\leqslant r\leqslant n-1$ such that $u=z^rv$ is a consequence of $U$. 
\end{lemma}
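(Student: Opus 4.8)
Throughout this sketch I read an equality $p=q$ between words of $C^*$ as the assertion that $p=q$ is a consequence of $U$, and I abbreviate $e_1e_2\cdots e_n$ by $\omega$. My plan is to prove the statement by induction on the length of $u$, the whole point being that the only nontrivial inductive step is to append a single letter to a word already written in the target form $z^rv$, and this can be done once I have at my disposal (i) relations that let me push an $x$ or a $y$ leftwards across a block of idempotents, and (ii) the two ``collapse'' identities $x^n=\omega$ and $y^n=\omega$. The bulk of the work is in manufacturing (i) and (ii) as consequences of $U$; the induction itself is then just bookkeeping.

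First I would record the elementary commutation facts. From $R_2$ and $R_3$ one gets $e_nx=(xy)x=x(yx)=xe_1=x$ and, dually, $ye_n=y(xy)=(yx)y=e_1y=y$. Together with $R_5$ (read as $e_ix=xe_{i+1}$ for $1\leqslant i\leqslant n-1$) and $R_3$ (read as $e_1y=y$), the first of these already lets me move $x$ leftwards across any idempotent letter. For $y$ I must derive the analogue of $R_5$, which is \emph{not} listed in $U$; this is the first real obstacle. I would obtain $e_jy=ye_{j-1}$ for $2\leqslant j\leqslant n$ by sandwiching $R_5$ between copies of $y$: using $xe_j=e_{j-1}x$ one computes $y(xe_j)y=ye_{j-1}(xy)=ye_{j-1}e_n$ on one side and $(yx)e_jy=e_1e_jy=e_jy$ on the other, whence $e_jy=ye_{j-1}e_n=ye_{j-1}$ after commuting idempotents ($R_4$) and applying $ye_n=y$. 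Consequently, for every idempotent word $v$ there are idempotent words $v',v''$ with $vx=xv'$ and $vy=yv''$ (process the letters of $v$ from right to left, each step being one of $e_jx=xe_{j+1}$, $e_nx=x$, $e_jy=ye_{j-1}$, $e_1y=y$; all intermediate indices stay in $\{1,\dots,n\}$).

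Next I would establish the collapses. Since $xe_1=x$ ($R_3$) and $R_{11}$ give $x\omega=xe_1\cdots e_n=xe_2\cdots e_n=\omega$, and since a short computation with $R_3,R_5$ shows $x^ne_j=x^n$ for every $j$, iterating $x\omega=\omega$ yields $\omega=x^{n-1}\omega=x^{n-1}(xe_2\cdots e_n)=x^ne_2\cdots e_n=x^n$; this is the identity $x^n=\omega$. Multiplying $R_{11}$ on the left by $y$ and using $yx=e_1$ gives the dual relation $ye_1\cdots e_{n-1}=\omega$ (after absorbing the trailing $e_n$ via $ye_n=y$), and the same argument run with $y$ in place of $x$ produces $y^n=\omega$; in fact the set of relations I have derived is symmetric under $x\leftrightarrow y$, $e_i\leftrightarrow e_{n+1-i}$, so $y^n=\omega$ follows from $x^n=\omega$ by this duality. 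I regard obtaining (i) for $y$ and these two collapses as the crux of the proof.

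Finally I would run the induction. Writing $u=u'c$ with $c$ a single letter, the inductive hypothesis gives $u'=z^rv$ with $0\leqslant r\leqslant n-1$ and $v$ an idempotent word (when $r=0$ this just reads $u'=v$). If $c=e_j$ then $u=z^r(ve_j)$ is already of the required shape. If $c=x$, I first rewrite $vx=xv'$; then $z^rx$ is treated by cases: for $z=x$ I get $x^{r+1}$, which is admissible unless $r+1=n$, in which case $x^n=\omega$ absorbs into the idempotent block and the exponent drops to $0$; for $z=y$ (so $r\geqslant1$) I use $y^rx=y^{r-1}(yx)=y^{r-1}e_1$, lowering the exponent. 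The letter $c=y$ is symmetric, using $vy=yv''$, the collapse $y^n=\omega$, and $x^ry=x^{r-1}e_n$. In every case the outcome is $z^{r'}w$ with $z\in\{x,y\}$, $w$ idempotent and $0\leqslant r'\leqslant n-1$, closing the induction.
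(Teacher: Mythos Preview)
Your argument is correct. The induction scheme is the natural one, and the two auxiliary ingredients you isolate---the commutation rules $e_jy=ye_{j-1}$ (derived by sandwiching $R_5$ between $y$'s) and the collapses $x^n=\omega=y^n$---are exactly what is needed; your derivations of these from $U$ are sound. One small cosmetic point: when you write ``$(yx)e_jy=e_1e_jy=e_jy$'' you are silently using $R_4$ and $R_3$ in the last step, which is fine but worth making explicit since $j\geqslant2$ is essential there.

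As for comparison with the paper: the present paper does \emph{not} prove this lemma. It is quoted verbatim as \cite[Lemma~2.15]{Fernandes:2022sub} and used as a black box. So there is no proof here to compare against. Your write-up therefore supplies something the paper omits, and in fact your intermediate identities $e_jy=ye_{j-1}$, $e_nx=x$, $ye_n=y$, $x^n=y^n=e_1\cdots e_n$ recover, en route, the content of the relations~(\ref{powers}) that the paper also imports from \cite{Fernandes:2022sub}. The approach you take---push letters left through idempotent blocks and reduce exponents modulo $n$ via the collapse---is the expected one and almost certainly coincides with the original proof in \cite{Fernandes:2022sub}.
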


Next, recall that the relations 
\begin{equation}\label{powers}
\mbox{$x^je_i=x^j=e_{n-i+1}x^j$ and $e_iy^j=y^j=y^je_{n-i+1}$, for $1\leqslant i\leqslant j\leqslant n$}, 
\end{equation}
are consequences of $U$ (see  \cite[Lemma 2.11]{Fernandes:2022sub}). 

In order to find a set of words $W$ satisfying condition 2 of Proposition \ref{ruskuc} for $\langle A \mid R\rangle$, 
we now present a series of lemmas. 

\smallskip 

The following lemma is easy to deduce from (\ref{powers}) and $R_{10}$: 

\begin{lemma}\label{defw1w2}
Let $1\leqslant i \leqslant \lfloor\frac{n-1}{2}\rfloor$. The following relations are consequences of $R$:
\begin{enumerate}
\item $y^rx_i=y^re_2\cdots e_n$, for $r\geqslant n-i$;
\item $y^ry_i=y^re_2\cdots e_n$, for $r\geqslant i$;
\item $x_ix^s=e_2\cdots e_nx^s$, for $s\geqslant i$;
\item $y_ix^s=e_2\cdots e_nx^s$, for $s\geqslant n-i$. 
\end{enumerate}
\end{lemma}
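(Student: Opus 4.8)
The plan is to prove all four relations by one and the same two-step reduction, exploiting the fact that each of the four clauses of $R_{10}$ produces the common value $e_2\cdots e_n$. Concretely, for each identity I would first use (\ref{powers}) to absorb a single, carefully chosen idempotent $e_k$ into the adjacent power of $x$ or $y$, and then apply the appropriate clause of $R_{10}$ to collapse the resulting product $e_{i+1}x_i$, $e_{n-i+1}y_i$, $x_ie_{n-i+1}$ or $y_ie_{i+1}$ into $e_2\cdots e_n$. Thus the whole argument reduces to matching the index $k$ of the absorbed idempotent against the subscript demanded by $R_{10}$ and checking that the range hypothesis $1\leqslant k\leqslant r$ (respectively $1\leqslant k\leqslant s$) of (\ref{powers}) is exactly the stated bound on $r$ (respectively $s$).

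For item 1, I would note that (\ref{powers}) gives $y^re_{i+1}\approx y^r$ precisely when $r\geqslant n-i$ (take $e_{n-k+1}=e_{i+1}$, i.e.\ $k=n-i$, and impose $k\leqslant r$); inserting this idempotent and then using the clause $e_{i+1}x_i=e_2\cdots e_n$ of $R_{10}$ yields $y^rx_i\approx y^re_{i+1}x_i\approx y^re_2\cdots e_n$. Item 2 is handled identically with the idempotent $e_{n-i+1}$: here (\ref{powers}) gives $y^re_{n-i+1}\approx y^r$ exactly for $r\geqslant i$ (now $k=i$), and the clause $e_{n-i+1}y_i=e_2\cdots e_n$ of $R_{10}$ completes the computation.

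Items 3 and 4 are the left-right mirror images, now absorbing an idempotent on the left of a power of $x$. For item 3, (\ref{powers}) gives $e_{n-i+1}x^s\approx x^s$ for $s\geqslant i$, so $x_ix^s\approx x_ie_{n-i+1}x^s\approx e_2\cdots e_n\,x^s$ via the clause $x_ie_{n-i+1}=e_2\cdots e_n$; for item 4, (\ref{powers}) gives $e_{i+1}x^s\approx x^s$ for $s\geqslant n-i$, and the clause $y_ie_{i+1}=e_2\cdots e_n$ finishes it. The only point needing attention, and it is purely a matter of bookkeeping, is to confirm that in each case the index condition coming from (\ref{powers}) translates into precisely the displayed bound; this it does, since $i\leqslant\lfloor\frac{n-1}{2}\rfloor$ guarantees $1\leqslant i\leqslant n-1$ and hence also $1\leqslant n-i\leqslant n-1$ throughout, so every idempotent subscript used lies in the admissible range. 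No step presents a genuine obstacle, and the lemma is indeed, as claimed, an immediate consequence of (\ref{powers}) and $R_{10}$.
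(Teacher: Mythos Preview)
Your argument is correct and is essentially the paper's own: insert the appropriate idempotent via (\ref{powers}) and then collapse using $R_{10}$. The only cosmetic difference is that the paper proves item~1 first for the minimal exponent $r=n-i$ and then factors $y^r=y^{r-n+i}y^{n-i}$, thereby staying literally within the stated range $1\leqslant i\leqslant j\leqslant n$ of (\ref{powers}); your direct appeal to $y^re_{i+1}\approx y^r$ for all $r\geqslant n-i$ tacitly uses this same factorization when $r>n$, but that is a trivial matter.
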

\begin{proof}
We prove the first relation. For the remaining three ones we can argue in a similar way.

By (\ref{powers}), we have $y^{n-i}\approx y^{n-i}e_{i+1}$ and so, by $R_{10}$, we get $y^{n-i}x_i\approx y^{n-i}e_{i+1}x_i\approx y^{n-i}e_2\cdots e_n$, 
whence $y^rx_i=y^{r-n+i}y^{n-i}x_i\approx y^{r-n+i} y^{n-i}e_2\cdots e_n = y^re_2\cdots e_n$, as required. 
\end{proof}

\begin{lemma}\label{r11r12}
The following relations are consequences of $R$:
\begin{enumerate}
\item $x_ix_j=y_iy_j=e_2\cdots e_n$, for $1\leqslant i ,j\leqslant \lfloor\frac{n-1}{2}\rfloor$;
\item $x_iy_j=y_jx_i=e_2\cdots e_n$, for $1\leqslant i ,j\leqslant \lfloor\frac{n-1}{2}\rfloor$ and $i\neq j$.
\end{enumerate}
\end{lemma}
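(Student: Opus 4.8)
The plan is to prove each of the four equalities by reducing products of the rank-$2$ generators $x_i,y_i$ to the standard idempotent $e_2\cdots e_n$, using the relations already established together with Lemma~\ref{defw1w2}. The key observation is that $e_2\cdots e_n$ is the partial identity on $\Omega_n\setminus\{1\}$, and every product $x_ix_j$, $y_iy_j$, $x_iy_j$, $y_jx_i$ (with the stated index restrictions) has rank at most $1$ in $\ODI_n$; in fact each such product equals the partial identity $\id_{\{1\}}$, whose representative word is precisely $e_2\cdots e_n$. So the semantic content is clear from Proposition~\ref{gensetsrank}; the work is purely to derive the equalities syntactically as consequences of $R$.

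First I would handle item~1. To compute $x_ix_j$, I would multiply the defining relation $R_6$ for $x_iy_i$ or, more directly, bring one factor into a power-of-$x$/power-of-$y$ form. The cleanest route is to use $R_9$, which expresses $e_1x_i$ as $x^{n-2i}e_{n-2i+1}\cdots e_n$ (a word beginning with a power of $x$), and then to feed this into the power-absorption relations~(\ref{powers}) and Lemma~\ref{defw1w2}. Concretely, I expect to first show $x_i\approx e_2\cdots e_n\, x_i$ or a similar normalization using $R_8$ and $R_{10}$, then commute a suitable $e_k$ past $x_j$ via $R_7$/$R_8$ to expose a factor that Lemma~\ref{defw1w2}(3) collapses to $e_2\cdots e_n x^s$, and finally absorb the trailing power of $x$ against the leading structure of the second factor. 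The analogous computation for $y_iy_j$ would use Lemma~\ref{defw1w2}(2) together with the $y$-versions of $R_7$--$R_9$.

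For item~2, the mixed products $x_iy_j$ and $y_jx_i$ with $i\neq j$, I would argue similarly but must use the hypothesis $i\neq j$ at the point where the ranks would otherwise fail to drop to $1$; when $i=j$ the product $x_iy_i$ is genuinely $R_6$ and is an idempotent of rank $2$, not $e_2\cdots e_n$, so the index restriction is essential. The strategy is again to rewrite one factor using $R_9$ so that a power of $x$ (or $y$) appears adjacent to the other factor, then apply Lemma~\ref{defw1w2}(3) or~(4) to collapse to $e_2\cdots e_n$ times a power, and absorb the power using~(\ref{powers}) and $R_{10}$.

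The main obstacle I anticipate is the bookkeeping of exactly which $e_k$'s appear and in which order after applying $R_9$: the words $x^{n-2i}e_{n-2i+1}\cdots e_{n-i}e_{n-i+2}\cdots e_n$ have a specific omitted index, and matching this against the commutation relations $R_7$/$R_8$ (which each exclude one particular index) requires care to ensure the absorbing relation from Lemma~\ref{defw1w2} actually applies — i.e. that the exponent of $x$ or $y$ meets the threshold $s\geqslant i$, $s\geqslant n-i$, etc. I would therefore treat the exponent inequalities explicitly, checking that $n-2i$ together with the range $1\leqslant i\leqslant\lfloor\frac{n-1}{2}\rfloor$ guarantees the hypotheses of the relevant part of Lemma~\ref{defw1w2}. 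Once a single representative case is written out in full, the remaining three equalities should follow by the same manipulation with the symmetric relations, and I would state that explicitly rather than repeat the computation.
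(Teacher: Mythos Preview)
Your plan overcomplicates matters and in its emphasized form has an obstruction. You propose to route the computation through $R_9$ and Lemma~\ref{defw1w2}, but $R_9$ only fires when $e_1$ is adjacent to $x_i$ (or $y_i$), and nothing in $x_ix_j$, $y_iy_j$, $x_iy_j$, $y_jx_i$ produces an $e_1$; relations $R_7$, $R_8$ explicitly exclude $j=1$, so you cannot insert $e_1$ for free. Likewise Lemma~\ref{defw1w2} needs a genuine power $x^s$ or $y^r$ adjacent to a rank-$2$ generator, and again there is none present. Your fallback sentence about ``commuting a suitable $e_k$ past $x_j$ via $R_7$/$R_8$'' is on the right track, but you should follow it with $R_{10}$, not Lemma~\ref{defw1w2}. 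Also note that $x_i\approx e_2\cdots e_n\,x_i$ is false: by $R_8$ and $R_{10}$ the right-hand side collapses to $e_2\cdots e_n$, not to $x_i$.

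The paper's proof is short and uses neither $R_9$ nor Lemma~\ref{defw1w2}. The idea is a two-step collapse via $R_{10}$. For $x_ix_j$: first check the index inequality $j+1\neq n-i+1$ (which holds for all $i,j$ in the allowed range), so $R_7$ gives $x_i\approx x_ie_{j+1}$; then $R_{10}$ gives $e_{j+1}x_j\approx e_2\cdots e_n$. Now $x_ie_2\cdots e_n$: pull out $e_{n-i+1}$ using $R_4$ and apply $R_{10}$ again to get $x_ie_{n-i+1}\approx e_2\cdots e_n$, and the remaining $e_k$'s are absorbed by $R_1$/$R_4$. The other three products are handled identically, with the hypothesis $i\neq j$ in part~2 being exactly what makes the needed index inequality $n-j+1\neq n-i+1$ (resp.\ $i+1\neq j+1$) hold so that $R_7$/$R_8$ apply. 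So the whole lemma reduces to two applications of $R_{10}$ bridged by an idempotent insertion via $R_7$ or $R_8$.
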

\begin{proof}
Let $1\leqslant i ,j\leqslant \lfloor\frac{n-1}{2}\rfloor$. 

\smallskip 

Then $j+1\neq n-i+1$. In fact, if $j+1=n-i+1$ then 
$$\textstyle 
1\leqslant j\leqslant \lfloor\frac{n-1}{2}\rfloor\Longrightarrow 1\leqslant n-i\leqslant \lfloor\frac{n-1}{2}\rfloor 
\Longrightarrow \lfloor\frac{n-1}{2}\rfloor < n- \lfloor\frac{n-1}{2}\rfloor \leqslant i, 
$$
which is a contradiction. Notice that $j+1\neq n-i+1$ is equivalent to $i+1\neq n-j+1$. 
Hence, by $R_7$, we have $x_i\approx x_ie_{j+1}$ and $e_{i+1}y_j\approx y_j$. Thus, by $R_1$, $R_4$ and $R_{10}$, we get 
$$
x_ix_j \approx x_ie_{j+1}x_j\approx x_ie_2\cdots e_n \approx x_i e_{n-i+1}e_2\cdots e_{n-i}e_{n-i+2}\cdots e_n \approx 
e_2\cdots e_n e_2\cdots e_{n-i}e_{n-i+2}\cdots e_n \approx e_2\cdots e_n
$$
and 
$$
y_iy_j \approx y_ie_{i+1}y_j\approx e_2\cdots e_n y_j\approx e_2\cdots e_{n-j}e_{n-j+2}\cdots e_n e_{n-j+1} y_j\approx 
e_2\cdots e_{n-j}e_{n-j+2}\cdots e_n e_2\cdots e_n \approx e_2\cdots e_n
$$
(observe that $i,j<n$ implies $n-i+1,n-j+1>1$), which proves property 1. 

\smallskip 

Now, in order to prove property 2, suppose also that $i\neq j$. Then $n-j+1\neq n-i+1$ and $i+1\neq j+1$, 
whence $x_i\approx x_ie_{n-j+1}$, by $R_7$, and $y_je_{i+1}\approx y_j$, by $R_8$. 
Thus, by $R_1$, $R_4$, $R_{10}$ and the above calculations, we obtain 
$$
x_iy_j \approx x_ie_{n-j+1}y_j\approx x_ie_2\cdots e_n\approx e_2\cdots e_n
$$
and 
$$
y_jx_i \approx y_je_{i+1}x_i \approx y_je_2\cdots e_n \approx y_j e_{j+1}e_2\cdots e_je_{j+2}\cdots e_n
\approx e_2\cdots e_n e_2\cdots e_je_{j+2}\cdots e_n \approx e_2\cdots e_n, 
$$
as required.
\end{proof}

\begin{lemma}\label{stepone} 
Let $u\in C^*$ and $1\leqslant i \leqslant \lfloor\frac{n-1}{2}\rfloor$. Then: 
\begin{enumerate}
\item There exists $u'\in C^*$ such that $ux_i\approx u'$ or there exists $0\leqslant r\leqslant n-i-1$ such that $ux_i\approx y^rx_i$;
\item There exists $u'\in C^*$ such that $uy_i\approx u'$ or there exists $0\leqslant r\leqslant i-1$ such that $uy_i\approx y^ry_i$. 
\end{enumerate}
\end{lemma}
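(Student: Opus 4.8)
The plan is to prove part~1 in full and obtain part~2 by a dual argument, exchanging the roles of $x_i$ and $y_i$ and of the relations governing them. The starting point is Lemma~\ref{leftxy}: since $u\in C^*$, it is a consequence of $U$ (and hence of $R$, as $U\subseteq R$) that $u\approx z^rv$ for some $z\in\{x,y\}$, some $v\in\{e_1,\ldots,e_n\}^*$ and some $0\leqslant r\leqslant n-1$. Thus $ux_i\approx z^rvx_i$, and the whole problem reduces to understanding how the idempotent block $v$, and then the leading power $z^r$, interact with $x_i$.

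First I would show, by induction on the length of $v$, that $vx_i\approx w$ where either $w\in C^*$ or $w=x_i$. The base case $v=1$ is immediate. Writing $v=v'e_j$, the rightmost letter $e_j$ meets $x_i$ and three relations settle it: if $j=1$ then $e_1x_i$ is rewritten into $C^*$ by $R_9$; if $j=i+1$ then $e_{i+1}x_i\approx e_2\cdots e_n\in C^*$ by $R_{10}$; and if $2\leqslant j\leqslant n$ with $j\neq i+1$ then $e_jx_i\approx x_i$ by $R_8$, so that $vx_i\approx v'x_i$ and the induction hypothesis applies. Consequently, if some letter $e_1$ or $e_{i+1}$ occurs in $v$, the first one encountered reading from the right converts the product into $C^*$ (all letters to its right being absorbed via $R_8$ beforehand); otherwise every letter of $v$ is absorbed and $vx_i\approx x_i$.

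It then remains to treat the surviving case $vx_i\approx x_i$, where $ux_i\approx z^rx_i$. If $r=0$ then $ux_i\approx x_i=y^0x_i$, the second alternative with $r=0$. If $z=y$ and $1\leqslant r\leqslant n-i-1$ we are already in the second alternative, while if $z=y$ and $r\geqslant n-i$ then Lemma~\ref{defw1w2}(1) gives $y^rx_i\approx y^re_2\cdots e_n\in C^*$, the first alternative. The one case needing a small trick is $z=x$ with $r\geqslant1$: using $R_3$ to write $x\approx xe_1$ and then $R_9$ to rewrite $e_1x_i$ into $C^*$, one obtains $xx_i\approx xe_1x_i\approx x^{\,n-2i+1}e_{n-2i+1}\cdots e_{n-i}e_{n-i+2}\cdots e_n\in C^*$ (note $n-2i\geqslant1$ since $i\leqslant\lfloor\frac{n-1}{2}\rfloor$), whence $x^rx_i\approx x^{r-1}(xx_i)\in C^*$, again the first alternative.

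The main obstacle is precisely this last point: the statement only permits the residual form $y^rx_i$, so one must verify that the a priori possible residue $x^rx_i$ never genuinely survives but always collapses into $C^*$; the identity $x\approx xe_1$ feeding into $R_9$ is what makes this work. Once that is in hand, every case falls into one of the two advertised alternatives, and the bound $r\leqslant n-i-1$ is forced exactly by the dichotomy in Lemma~\ref{defw1w2}(1). For part~2 the same three steps apply verbatim, now with $R_7$, $R_{10}$ and $R_9$ governing $e_jy_i$, $e_{n-i+1}y_i$ and $e_1y_i$ respectively, with large powers collapsed via Lemma~\ref{defw1w2}(2) (giving the range $0\leqslant r\leqslant i-1$), and with the auxiliary identity $xy_i\approx xe_1y_i\in C^*$ (via $R_9$) replacing $xx_i$.
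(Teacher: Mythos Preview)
Your proof is correct and follows essentially the same route as the paper's: invoke Lemma~\ref{leftxy} to write $u\approx z^rv$, reduce the idempotent block $v$ against $x_i$ via $R_8$, $R_9$ and $R_{10}$, and then dispose of the prefix $z^r$ using $R_3$+$R_9$ when $z=x$ and Lemma~\ref{defw1w2} when $z=y$ with $r$ large. The only cosmetic difference is that the paper first normalises $v$ (via $R_1$ and $R_4$) to one of the four words $1,e_1,e_{i+1},e_1e_{i+1}$ before applying $R_9$/$R_{10}$, whereas you run a right-to-left induction on $v$ that accomplishes the same reduction; the case analysis thereafter is identical.
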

\begin{proof}
We prove property 1. The argument for proving property 2 is analogous. 

\smallskip 

Let $z\in\{x,y\}$, $v\in\{e_1,\ldots, e_n\}^*$ and $0\leqslant r\leqslant n-1$ be such that $u\approx z^rv$, by Lemma \ref{leftxy}.  
Since $v\in\{e_1,\ldots, e_n\}^*$, by $R_1$, $R_4$ and $R_8$, we have $vx_i\approx v'x_i$, for some $v'\in\{1,e_1,e_{i+1},e_1e_{i+1}\}$. 

If $v'=e_1^te_{i+1}$, for some $t\in\{0,1\}$, then 
$ux_i\approx z^rv'x_i=z^re_1^te_{i+1}x_i\approx z^re_1^t e_2\cdots e_n\in C^*$, 
by using $R_{10}$. 

If $v'=e_1$ then $ux_i\approx z^rv'x_i=z^re_1x_i\approx z^r x^{n-2i}e_{n-2i+1}\cdots e_{n-i}e_{n-i+2}\cdots e_n\in C^*$, by using $R_9$. 

Now, suppose that $v'=1$. Then $ux_i\approx z^rx_i$. If $r=0$ then there is nothing more to prove. So, suppose also that $r>0$. 

If $z=x$ then $ux_i\approx x^rx_i\approx x^re_1x_i\approx x^r x^{n-2i}e_{n-2i+1}\cdots e_{n-i}e_{n-i+2}\cdots e_n\in C^*$, by $R_3$ and $R_9$. 

If $z=y$ then $ux_i\approx y^rx_i$. If $r\leqslant n-i-1$ then there is nothing more to prove. 
On the other hand, if $r\geqslant n-i$  then, by Lemma \ref{defw1w2}, 
we get $ux_i\approx y^rx_i\approx y^re_2\cdots e_n\in C^*$. 

Thus, the proof of property 1 is complete, as required. 
\end{proof}

\smallskip 

Let $\varphi:A^*\longrightarrow \ODI_n$ be the homomorphism of monoids that extends the mapping 
$A\longrightarrow \ODI_n$ defined by
$$
x\longmapsto x ,\quad y\longmapsto y, \quad e_i\longmapsto e_i, \mbox{~for $1\leqslant i\leqslant n$}, 
\quad x_j\longmapsto x_j~\text{and}~y_j\longmapsto y_j, \mbox{~for $1\leqslant j\leqslant \lfloor\frac{n-1}{2}\rfloor$}. 
$$ 
Notice that we are using the same symbols for the letters of the alphabet $A$ and for the generators of $\ODI_n$,
which simplifies notation and, within the context, will not cause ambiguity. 

\smallskip 

The following subsets of $A^*$, in a sense, were motivated by Lemma \ref{defw1w2}: 
$$
W_1 = \{y^rx_ix^s \mid\mbox{$0\leqslant r,s\leqslant n-1$, $1\leqslant i \leqslant \lfloor\frac{n-1}{2}\rfloor$ and $s+1\leqslant i\leqslant n-r-1$}\} 
$$
and 
$$
W_2 = \{y^ry_ix^s \mid\mbox{$0\leqslant r,s\leqslant n-1$, $1\leqslant i \leqslant \lfloor\frac{n-1}{2}\rfloor$ and $r+1\leqslant i\leqslant n-s-1$}\}.  
$$
Observe that $|W_1\cup W_2|=\frac{1}{6}(n+1)n(n-1) - \frac{1}{8}(1+(-1)^n)n^2$, i.e. the number of order-preserving restrictions of $\{hg^k\mid 0\leqslant k\leqslant n-1\}$ with rank (greater than or) equal to $2$, except those that are also restrictions of $\{g^k\mid 0\leqslant k\leqslant n-1\}$. In fact, $(W_1\cup W_2)\varphi$ is precisely this set of transformations of $\ODI_n$ with rank $2$.
Furthermore, we have: 

\begin{lemma}\label{stepplus}
Let $w\in A^*$. Then, there exists $w'\in C^*\cup W_1\cup W_2$ such that $w\approx w'$.
\end{lemma}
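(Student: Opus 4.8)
The plan is to argue by induction on the number $m$ of occurrences of rank-$2$ letters (that is, letters from $\{x_1,\ldots,x_{\lfloor\frac{n-1}{2}\rfloor},y_1,\ldots,y_{\lfloor\frac{n-1}{2}\rfloor}\}$) in $w$. When $m=0$ we have $w\in C^*$ and there is nothing to prove, so take $w'=w$. Before treating the inductive step I would record one auxiliary reduction to be used repeatedly: for a rank-$2$ letter $t$ and $u\in C^*$, the product $tu$ is $\approx$ to a word of $C^*$ or to $tx^s$ for a suitable $s$. This follows by writing $u\approx z^sv$ with $z\in\{x,y\}$ and $v\in\{e_1,\ldots,e_n\}^*$ (Lemma~\ref{leftxy}): if $z=y$ and $s\geqslant1$ the crossed product collapses into $C^*$ via $ty=te_1y=(te_1)y$, using $R_3$ and $R_9$ (which put $te_1\in C^*$); when $z=x$, the trailing factor $v$ is pushed to the left of $x^s$ by $R_5$ (equivalently by (\ref{powers})) and then either absorbed through $R_7$ or it triggers a collapse into $C^*$ through $R_9$ or $R_{10}$; and $tx^s$ itself collapses into $C^*$ once $s$ is large enough, by Lemma~\ref{defw1w2}.

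For the inductive step with $m\geqslant1$ I would isolate the first rank-$2$ letter, writing $w=utw''$ with $u\in C^*$, $t$ a rank-$2$ letter, and $w''\in A^*$ containing $m-1$ rank-$2$ letters, and apply Lemma~\ref{stepone} to $ut$. If $ut\approx u'\in C^*$, then $w\approx u'w''$ has $m-1$ rank-$2$ letters and the induction hypothesis finishes the job. Otherwise $ut\approx y^rt$, so $w\approx y^rtw''$. When $m\geqslant2$, write $w''=u_2t_2w'''$ with $u_2\in C^*$; the auxiliary reduction applied to $tu_2$ either absorbs $t$ (dropping the count, whence the induction hypothesis applies) or yields $y^rtx^st_2w'''$, and then Lemma~\ref{stepone} applied to $x^st_2$ either produces a word of $C^*$ in that position (after which $t$ is again removed by the auxiliary reduction) or leads to $y^rty^{r_2}t_2w'''$. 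If $r_2\geqslant1$ the crossed product $ty^{r_2}$ collapses into $C^*$ by the reduction above, lowering the count; if $r_2=0$ the two rank-$2$ letters $t$ and $t_2$ become adjacent and Lemma~\ref{r11r12} together with $R_6$ replaces $tt_2$ by a word of $C^*$, dropping the count by two. In every branch the number of rank-$2$ letters strictly decreases, so the induction hypothesis applies.

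It remains to put the case $m=1$, that is $w\approx y^rtw''$ with $w''\in C^*$, into the prescribed form. Applying the auxiliary reduction to $tw''$ yields either a word of $C^*$ (whence $w\approx y^r\cdot(\text{word of }C^*)\in C^*$) or $y^rtx^s$, and the last step is to check that the exponents fall within the ranges defining $W_1$ and $W_2$. Here Lemma~\ref{stepone} supplies $0\leqslant r\leqslant n-i-1$ when $t=x_i$ and $0\leqslant r\leqslant i-1$ when $t=y_i$, while Lemma~\ref{defw1w2} forces a collapse into $C^*$ whenever $s$ leaves the admissible range ($s\geqslant i$ for $x_i$, $s\geqslant n-i$ for $y_i$); the surviving words $y^rx_ix^s$ and $y^ry_ix^s$ are then exactly the elements of $W_1$ and $W_2$. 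I expect the main obstacle to be this right-handed bookkeeping: there is no stated dual of Lemma~\ref{stepone}, so the reduction of $tu$ for $u\in C^*$ — in particular the collapse of the crossed products $x_iy^s$ and $y_ix^s$, and the commutation of the idempotent factors past $x^s$ via $R_5$ — must be carried out directly, and one has to verify that the resulting exponent bounds match the definitions of $W_1$ and $W_2$ precisely, invoking Lemma~\ref{defw1w2} to fold every out-of-range exponent back into $C^*$.
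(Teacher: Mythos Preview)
Your plan is sound and would yield a correct proof, but it proceeds along a genuinely different line from the paper's. The paper inducts on the \emph{length} $|w|$: writing $w=ua$ with $a\in A$ the last letter, it applies the induction hypothesis to $u$ to obtain $u'\in C^*\cup W_1\cup W_2$ and then carries out an exhaustive case analysis on the pair $(u',a)$ --- when $u'\in C^*$ it invokes Lemma~\ref{stepone}, and when $u'=y^rx_ix^s\in W_1$ it treats six subcases ($a=e_1$, $a=e_j$ with $j\geqslant2$, $a=x$, $a=y$, $a=x_j$, $a=y_j$), each by explicit manipulation with $R_5$--$R_{10}$, (\ref{powers}) and Lemma~\ref{defw1w2}, with an analogous six subcases for $u'\in W_2$. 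You instead induct on the number $m$ of rank-$2$ letters and package the right-hand bookkeeping into a single auxiliary reduction (a right-sided analogue of Lemma~\ref{stepone}: $tu$ is $\approx$ to a word of $C^*$ or to $tx^s$), after which the inductive step is short: either the count drops immediately, or two adjacent rank-$2$ letters meet and are eliminated by $R_6$/Lemma~\ref{r11r12}. Your route is more structural and considerably shorter once the auxiliary reduction is established; the paper's route is more elementary in that it never formulates that dual lemma, at the price of a long and detailed case split. One small correction to your sketch: in the absorption step for $te_{j-s}$ you must cite $R_8$ (not $R_7$) when $t=y_i$, and note that the ``otherwise $y^{r_2}t_2$'' branch coming out of Lemma~\ref{stepone} applied to $x^st_2$ in fact only occurs with $r_2=0$ (since $z=x$ forces a collapse for $s\geqslant1$), so your $r_2\geqslant1$ subcase is vacuous but harmless.
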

\begin{proof} 
We proceed by induction on $|w|$. 

\smallskip 

If $|w|=1$ then $w\in C\cup W_1\cup W_2$ and so there is nothing to prove. 

\smallskip

As induction hypothesis, assume that the lemma is valid for all words $w\in A^*$ such that $|w|=k\geqslant1$. 

Let $w\in A^*$ be such that $|w|=k+1$. 

If $w\in C^*$ then there is nothing to prove. 
Hence, suppose that $w\in A^*\setminus C^*$. 
Let $u\in A^*$ and $a\in A$ be such that $w=ua$. 
We will consider several cases. 

\smallskip 

\noindent{\sc case 1.} $u\in C^*$. 

Therefore $a\in A\setminus C$. So, by Lemma \ref{stepone}, there exists $u'\in C^*$ such that $ua\approx u'$ 
or there exists $0\leqslant r\leqslant n-i-1$ such that $ua\approx y^rx_i$, with $a=x_i$ for some $1\leqslant i\leqslant \lfloor\frac{n-1}{2}\rfloor$, 
or there exists $0\leqslant r\leqslant i-1$ such that $ua\approx y^ry_i$, with $a=y_i$ for some $1\leqslant i\leqslant \lfloor\frac{n-1}{2}\rfloor$. 
Hence, in any of these three situations, we obtain $w'\in C^*\cup W_1\cup W_2$ such that $w=ua\approx w'$. 

\smallskip 

\noindent{\sc case 2.} $u\in A^*\setminus C^*$. 

Since $|u|=k$ then, by the induction hypothesis, there exists $u'\in C^*\cup W_1\cup W_2$ such that $u'\approx u$. 

\smallskip 

\noindent{\sc case 2.1.} $u'\in C^*$. 

If $a\in C$ then $w=ua\approx u'a\in C^*$. On the other hand, if $a\in A\setminus C$ then, as in {\sc case 1}, there exists 
$w'\in C^*\cup W_1\cup W_2$ such that $u'a\approx w'$ and so such that $w=ua\approx u'a\approx w'$. 

\smallskip 

\noindent{\sc case 2.2.} $u'\in W_1$, i.e. $u'=y^rx_ix^s$, 
for some $0\leqslant r,s\leqslant n-1$, $1\leqslant i \leqslant \lfloor\frac{n-1}{2}\rfloor$ and $s+1\leqslant i\leqslant n-r-1$. 

Then, we have $w=ua\approx u'a= y^rx_ix^sa$. 

\smallskip 

\noindent{\sc case 2.2.1.} $a=e_1$. 

If $s>0$ then, by $R_3$, $x^se_1\approx x^s$ and so $w\approx y^rx_ix^se_1\approx y^rx_ix^s\in W_1$. 
On the other hand, if $s=0$ then, by $R_9$, we have 
$
w\approx y^rx_ie_1\approx  y^r x^{n-2i}e_{n-2i+1}\cdots e_{n-i}e_{n-i+2}\cdots e_n\in C^*
$. 

\smallskip 

\noindent{\sc case 2.2.2.} $a=e_j$, with $2\leqslant j\leqslant n$. 

If $j\leqslant s$ then, by (\ref{powers}), $w\approx y^rx_ix^se_j\approx y^rx_ix^s\in W_1$. 

Now, suppose that $s<j$. Then, by $R_5$, we get $x^se_j\approx e_{j-s}x^s$ and so $w\approx y^rx_ix^se_j\approx y^rx_ie_{j-s}x^s$. 
If $j-s\neq 1$ and $j-s\neq n-i+1$ then, by $R_7$, $w\approx y^rx_ie_{j-s}x^s\approx y^rx_ix^s\in W_1$. 
If $j-s=1$ then, by $R_9$, $w\approx y^rx_ie_1x^s\approx y^r x^{n-2i}e_{n-2i+1}\cdots e_{n-i}e_{n-i+2}\cdots e_n  x^s \in C^*$. 
Finally, if $j-s=n-i+1$ then, by $R_{10}$, $w\approx y^rx_ie_{n-i+1}x^s\approx y^re_2\cdots e_n x^s\in C^*$. 
 
\smallskip 

\noindent{\sc case 2.2.3.} $a=x$. 

If $s+1<i$ then $w\approx y^rx_ix^{s+1}\in W_1$. 
On the other hand, if $s+1\geqslant i$ (in fact $s+1=i$) then, by Lemma \ref{defw1w2}, we get $w\approx y^rx_ix^{s+1}\approx y^re_2\cdots e_nx^{s+1}\in C^*$. 

\smallskip 

\noindent{\sc case 2.2.4.} $a=y$. 

If $s=0$ then, by $R_3$ and $R_9$, 
$w\approx y^rx_iy\approx y^rx_ie_1y\approx y^r x^{n-2i}e_{n-2i+1}\cdots e_{n-i}e_{n-i+2}\cdots e_n y \in C^*$. 
On the other hand, if $s>0$ then, by $R_2$, $R_5$ (noticing that $s-1<n$) and $R_7$ (noticing that $n-s+1>n-i+1$), we have 
$w\approx y^rx_ix^sy\approx  y^rx_ix^{s-1}e_n\approx y^rx_ie_{n-s+1}x^{s-1}\approx y^rx_ix^{s-1}\in W_1$. 

\smallskip 

\noindent{\sc case 2.2.5.} $a=x_j$, with $1\leqslant j \leqslant \lfloor\frac{n-1}{2}\rfloor$.  

If $s=0$ then, by Lemma \ref{r11r12}, we have $w\approx y^rx_ix_j\approx y^r e_2\cdots e_n\in C^*$. 

So, suppose that $s>0$. Then, by $R_3$ and $R_9$, we get 
\begin{equation}\label{225}
w\approx y^rx_ix^sx_j\approx y^rx_ix^se_1x_j\approx y^rx_ix^s x^{n-2j}e_{n-2j+1}\cdots e_{n-j}e_{n-j+2}\cdots e_n. 
\end{equation}
If $s+n-2j\geqslant i$ then from (\ref{225}), by Lemma \ref{defw1w2}, 
we get $w\approx y^re_2\cdots e_n x^{s+n-2j}e_{n-2j+1}\cdots e_{n-j}e_{n-j+2}\cdots e_n\in C^*$. 
On the other hand, suppose that $s+n-2j+1\leqslant i$. 
If $s\geqslant j$ then $n-s+1\leqslant s+n-2j+1\leqslant i$, whence $n-i+1\leqslant s\leqslant i-1$ 
and so $n+2\leqslant 2i\leqslant 2\lfloor\frac{n-1}{2}\rfloor\leqslant n-1$, a contradiction. 
Therefore $s<j$ and so $n-2j+1\leqslant s+n-2j+1\leqslant n-j$. 
Hence, by (\ref{225}), $R_4$, $R_5$ and $R_9$, we obtain  
$$
\begin{array}{rcl}
w& \approx & y^rx_ix^{s+n-2j}e_{s+n-2j+1}e_{n-2j+1}\cdots e_{s+n-2j}e_{s+n-2j+2}\cdots e_{n-j}e_{n-j+2}\cdots e_n \\ 
  & \approx & y^rx_ie_1x^{s+n-2j}e_{n-2j+1}\cdots e_{s+n-2j}e_{s+n-2j+2}\cdots e_{n-j}e_{n-j+2}\cdots e_n \\
   & \approx & y^r  x^{n-2i}e_{n-2i+1}\cdots e_{n-i}e_{n-i+2}\cdots e_n x^{s+n-2j}e_{n-2j+1}\cdots e_{s+n-2j}e_{s+n-2j+2}\cdots e_{n-j}e_{n-j+2}\cdots e_n \in C^*.  
\end{array}
$$

\smallskip 

\noindent{\sc case 2.2.6.} $a=y_j$, with $1\leqslant j \leqslant \lfloor\frac{n-1}{2}\rfloor$.  

If $s=0$ then, by $R_6$ and Lemma \ref{r11r12}, respectively, we have 
$$
w\approx y^rx_iy_j \approx \left\{ 
\begin{array}{ll}
y^r e_2\cdots e_ie_{i+2}\cdots e_n \in C^*& \mbox{if $i=j$}\\
y^r e_2\cdots e_n\in C^* & \mbox{otherwise.}
\end{array} \right. 
$$

Suppose that $s>0$. Then, by $R_3$ and $R_9$, we get 
\begin{equation}\label{226}
w\approx y^rx_ix^sy_j \approx y^rx_ix^se_1y_j \approx y^rx_ix^s y^{n-2j}e_1\cdots e_je_{j+2}\cdots e_{2j}. 
\end{equation} 
Next, by $R_2$, $R_5$ and $R_7$ (noticing that $s+1\leqslant i$ implies $n-s+1>n-i+1$), we have 
$$
x_ix^sy^{n-2j} \approx x_ix^{s-1}e_ny^{n-2j-1} \approx x_i e_{n-s+1}x^{s-1}y^{n-2j-1} \approx x_ix^{s-1}y^{n-2j-1} 
$$
and, repeating this process as long as possible, we obtain 
\begin{equation}\label{226b}
x_ix^sy^{n-2j} \approx 
\left\{ 
\begin{array}{ll}
x_i & \mbox{if $s=n-2j$}\\
x_i x^{s-n+2j}& \mbox{if $s>n-2j$}\\
x_i y^{n-2j-s}& \mbox{if $s<n-2j$}. 
\end{array}\right. 
\end{equation} 

From $(\ref{226})$ and $(\ref{226b})$, we have:  if $s=n-2j$, by $R_9$, 
$$
w\approx y^rx_ie_1e_2\cdots e_je_{j+2}\cdots e_{2j}\approx y^r  x^{n-2i}e_{n-2i+1}\cdots e_{n-i}e_{n-i+2}\cdots e_n  e_2\cdots e_je_{j+2}\cdots e_{2j} \in C^*; 
$$
and, if $s<n-2j$, by $R_3$ and $R_9$,  
$$
\begin{array}{rcl}
w & \approx & y^rx_i y^{n-2j-s}e_1\cdots e_je_{j+2}\cdots e_{2j} \\
& \approx & y^rx_ie_1y^{n-2j-s}e_1\cdots e_je_{j+2}\cdots e_{2j} \\ 
& \approx &  y^r x^{n-2i}e_{n-2i+1}\cdots e_{n-i}e_{n-i+2}\cdots e_n  y^{n-2j-s}e_1\cdots e_je_{j+2}\cdots e_{2j} \in C^*. 
\end{array} 
$$

Now, observe that $s+1\leqslant i$ implies  
$n-s-1\geqslant n-i\geqslant n - \lfloor\frac{n-1}{2}\rfloor= \lfloor\frac{n}{2}\rfloor+1>\lfloor\frac{n-1}{2}\rfloor\geqslant j$, 
i.e. $j<n-s-1$ and so $j>s-n+2j+1$.  
Therefore, if $s>n-2j$, from $(\ref{226})$ and $(\ref{226b})$, we have
$$
\begin{array}{rcl}
w & \approx & y^rx_i x^{s-n+2j}e_1\cdots e_je_{j+2}\cdots e_{2j} \\
& \approx & y^rx_i x^{s-n+2j}  e_{s-n+2j+1} e_1\cdots e_{s-n+2j}e_{s-n+2j+2}\cdots e_je_{j+2}\cdots e_{2j} \\
& \approx & y^r x_ie_1 x^{s-n+2j} e_1\cdots e_{s-n+2j}e_{s-n+2j+2}\cdots e_je_{j+2}\cdots e_{2j} \\
& \approx & y^r x^{n-2i}e_{n-2i+1}\cdots e_{n-i}e_{n-i+2}\cdots e_n x^{s-n+2j} e_1\cdots e_{s-n+2j}e_{s-n+2j+2}\cdots e_je_{j+2}\cdots e_{2j} \in C^*, 
\end{array} 
$$
by $R_4$, $R_5$ and $R_9$, thus completing the proof of the lemma for {\sc case 2.2}. 

\smallskip 

\noindent{\sc case 2.3.} $u'\in W_2$, i.e. $u'=y^ry_ix^s$, 
for some $0\leqslant r,s\leqslant n-1$, $1\leqslant i \leqslant \lfloor\frac{n-1}{2}\rfloor$ and $r+1\leqslant i\leqslant n-s-1$. 

Then, we have $w=ua\approx u'a= y^ry_ix^sa$. 

 If $a\in C$ then proceeding analogously to the cases 2.2.1-2.2.4, we can find $w'\in C^*\cup W_2$ such that $w\approx w'$. 

Thus, it remains to study the case $a\in A\setminus C$, which we divide in two cases. 

\smallskip 

First, let us consider $a=x_j$, for some $1\leqslant j \leqslant \lfloor\frac{n-1}{2}\rfloor$.  

If $s=0$ then, by $R_6$ and Lemma \ref{r11r12}, respectively, we have 
$$
w\approx y^ry_ix_j \approx \left\{ 
\begin{array}{ll}
y^r e_2\cdots e_{n-i}e_{n-i+2}\cdots e_n \in C^*& \mbox{if $i=j$}\\
y^r e_2\cdots e_n\in C^* & \mbox{otherwise.}
\end{array} \right. 
$$

Now, suppose that $s>0$. Then, by $R_3$ and $R_9$, we get 
\begin{equation}\label{23}
w\approx y^ry_ix^sx_j \approx y^ry_ix^se_1x_j \approx y^ry_ix^s  x^{n-2j}e_{n-2j+1}\cdots e_{n-j}e_{n-j+2}\cdots e_n. 
\end{equation}

If $s+n-2j\geqslant n-i$ then 
$$
w\approx y^ry_ix^{s+n-2j}e_{n-2j+1}\cdots e_{n-j}e_{n-j+2}\cdots e_n \approx y^re_2\cdots e_nx^{s+n-2j}e_{n-2j+1}\cdots e_{n-j}e_{n-j+2}\cdots e_n\in C^*,
$$ 
by (\ref{23}) and Lemma \ref{defw1w2}.

So, suppose that $s+n-2j\leqslant n-i-1$, i.e. $i\leqslant n-(s+n-2j)-1$. 

If $s+n-2j=n-j$ then, by (\ref{23}), (\ref{powers}) and $R_5$, 
we have $w\approx y^ry_ix^{s+n-2j}e_{n-j+2}\cdots e_n\approx y^ry_ie_2\cdots e_jx^{s+n-2j}$. 
In addition, if $j\leqslant i$, by $R_8$, we obtain $w\approx y^ry_ie_2\cdots e_jx^{s+n-2j}\approx y^ry_ix^{s+n-2j}\in W_2$. 
Otherwise, by $R_4$, $R_8$ and $R_{10}$, it follows that 
$w\approx y^ry_ie_2\cdots e_ie_{i+2}\cdots e_je_{i+1}x^{s+n-2j} \approx y^ry_ie_{i+1}x^{s+n-2j} \approx y^re_2\cdots e_nx^{s+n-2j}\in C^*$. 

On the other hand, suppose that $s+n-2j\neq n-j$.  Since $s+n-2j+1>n-2j+1$, then $s+n-2j+1\in L=\{n-2j+1,\ldots,n-j,n-j+2,\ldots,n\}$ and so, 
by (\ref{23}), $R_4$, $R_5$ and $R_9$, we have 
$$
\begin{array}{rcl}
w& \approx & y^ry_ix^{s+n-2j}e_{n-2j+1}\cdots e_{n-j}e_{n-j+2}\cdots e_n \\
   & \approx & y^ry_ix^{s+n-2j}e_{s+n-2j+1}\Pi_{t\in L\setminus\{s+n-2j+1\}}e_t\\ 
   & \approx & y^ry_ie_1x^{s+n-2j}\Pi_{t\in L\setminus\{s+n-2j+1\}}e_t\\
   & \approx & y^r y^{n-2i}e_1\cdots e_ie_{i+2}\cdots e_{2i} x^{s+n-2j}\Pi_{t\in L\setminus\{s+n-2j+1\}}e_t\in C^*,  
\end{array} 
$$
which completes the study of this case. 

\smallskip 

Finally, let us move on to our last case by considering $a=y_j$, for some $1\leqslant j \leqslant \lfloor\frac{n-1}{2}\rfloor$.  

If $s=0$ then, by Lemma \ref{r11r12}, we have $w\approx y^ry_iy_j\approx y^r e_2\cdots e_n\in C^*$. 

So, suppose that $s>0$. Then, by $R_3$ and $R_9$, we get 
\begin{equation}\label{23b}
w\approx y^ry_ix^sy_j \approx y^ry_ix^se_1y_j \approx y^ry_ix^s y^{n-2j}e_1\cdots e_je_{j+2}\cdots e_{2j}. 
\end{equation} 
Next, by $R_2$, $R_5$ and $R_8$ (noticing that $i\leqslant n-s-1$ implies $i+1 < n-s+1$), we have 
$$
y_ix^sy^{n-2j} \approx y_ix^{s-1}e_ny^{n-2j-1} \approx y_i e_{n-s+1}x^{s-1}y^{n-2j-1} \approx y_ix^{s-1}y^{n-2j-1}. 
$$
By repeating this process as long as possible, we obtain 
\begin{equation}\label{23c}
y_ix^sy^{n-2j} \approx 
\left\{ 
\begin{array}{ll}
y_i & \mbox{if $s=n-2j$}\\
y_i x^{s-n+2j}& \mbox{if $s>n-2j$}\\
y_i y^{n-2j-s}& \mbox{if $s<n-2j$}. 
\end{array}\right. 
\end{equation} 

If $s=n-2j$, by (\ref{23b}), (\ref{23c}) and $R_9$, we have 
$$
w\approx y^ry_ie_1e_2\cdots e_je_{j+2}\cdots e_{2j}\approx y^r  y^{n-2i}e_1\cdots e_ie_{i+2}\cdots e_{2i}  e_2\cdots e_je_{j+2}\cdots e_{2j} \in C^*. 
$$
On the other hand, if $s<n-2j$, by (\ref{23b}), (\ref{23c}), $R_3$ and $R_9$, we get 
$$
\begin{array}{rcl}
w & \approx & y^ry_i y^{n-2j-s}e_1\cdots e_je_{j+2}\cdots e_{2j} \\
& \approx & y^ry_ie_1y^{n-2j-s}e_1\cdots e_je_{j+2}\cdots e_{2j} \\ 
& \approx &  y^r y^{n-2i}e_1\cdots e_ie_{i+2}\cdots e_{2i} y^{n-2j-s}e_1\cdots e_je_{j+2}\cdots e_{2j} \in C^*. 
\end{array} 
$$

Now, suppose that $s>n-2j$. 

In addition, suppose first that $s-n+2j=j$. Then, by (\ref{23b}), (\ref{23c}), (\ref{powers}) and $R_5$, we obtain 
$$
w\approx y^ry_ix^{j}e_1\cdots e_je_{j+2}\cdots e_{2j}\approx y^ry_ix^{j}e_{j+2}\cdots e_{2j} 
\approx y^ry_ie_2\cdots e_jx^{j}. 
$$
Since $i\leqslant n-s-1 < n-s=j$, then $w\approx y^ry_ie_{i+1}x^{j}\approx y^re_2\cdots e_nx^{j}\in C^*$, by $R_8$, $R_4$ and $R_{10}$.  

Secondly, suppose that $s-n+2j\neq j$. 
Since $s-n<0$, then $s-n+2j+1\leqslant 2j$ and so $s-n+2j+1\in K=\{1,\ldots,j,j+2,\ldots 2j\}$. 
Hence, by (\ref{23b}), (\ref{23c}), $R_4$, $R_5$ and $R_9$, we get 
$$
\begin{array}{rcl}
w&\approx& y^ry_ix^{s-n+2j}e_1\cdots e_je_{j+2}\cdots e_{2j} \\
&\approx& y^ry_ix^{s-n+2j}e_{s-n+2j+1}\Pi_{t\in K\setminus\{s-n+2j+1\}}  e_t \\
&\approx& y^ry_ie_1x^{s-n+2j}\Pi_{t\in K\setminus\{s-n+2j+1\}}  e_t \\ 
&\approx& y^r y^{n-2i}e_1\cdots e_ie_{i+2}\cdots e_{2i}  x^{s-n+2j}\Pi_{t\in K\setminus\{s-n+2j+1\}}  e_t \in C^*. 
\end{array} 
$$

Therefore, we have exhausted all possible cases, completing the proof of the lemma.
\end{proof}

Now, let us choose a set of forms $W_0$ for the presentation $\langle C \mid U\rangle$. 
Then, for each $w\in C^*$, there exists (a unique) $w'\in W_0$ such that $w'\rho_U w$. Moreover, as 
the monoid $\OCI_n$ is defined by the presentation $\langle C \mid U\rangle$, by Proposition \ref{firstpresoci}, 
we have $|W_0|=|\OCI_n|=3\cdot2^n-2n-2$. 

Let $W=W_0\cup W_1\cup W_2$. Then, by (\ref{cardodi}), 
$$
|W|=|W_0|+|W_1\cup W_2|=3\cdot2^n-2n-2+\frac{1}{6}(n+1)n(n-1) - \frac{1}{8}(1+(-1)^n)n^2=|\ODI_n|
$$
and, by Lemma \ref{stepplus}, for each word $w\in A^*$, there exists $w'\in W$ such that $w\approx w'$. 

On the other hand, it is a routine matter to check:  

\begin{lemma}\label{genreloci}
The generating set $\{x,y,e_1,e_2,\ldots,e_n,x_1,\ldots,x_{\lfloor\frac{n-1}{2}\rfloor},y_1,\ldots,y_{\lfloor\frac{n-1}{2}\rfloor}\}$ 
of $\ODI_n$ satisfies (via $\varphi$) all relations from $R$. 
\end{lemma}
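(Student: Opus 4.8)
The plan is to verify directly that each of the relations $R_1$ through $R_{11}$ holds in $\ODI_n$ once its letters are read as the corresponding partial permutations via $\varphi$. The only tools needed are the explicit tabular descriptions of the generators together with the elementary rules for composing with a partial identity in the symmetric inverse monoid: since composition is read from left to right, for any $\alpha\in\I_n$ and $X\subseteq\Omega_n$ one has $\id_X\alpha=\alpha|_{\dom(\alpha)\cap X}$ (left multiplication trims the domain) while $\alpha\,\id_X$ is the restriction of $\alpha$ to $\{a\in\dom(\alpha)\mid a\alpha\in X\}$ (right multiplication trims according to the image). Recalling that $e_i=\id_{\Omega_n\setminus\{i\}}$ and, more generally, $\prod_{i\in Y}e_i=\id_{\Omega_n\setminus Y}$, these two rules reduce every relation to a comparison of the domain, image and underlying map of two small partial permutations.

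A substantial part of the work can be bypassed at once. The relations $R_1$--$R_5$ together with $R_{11}$ are, letter for letter, the relation set $U$ over the sub-alphabet $C=\{x,y,e_1,\ldots,e_n\}$, and by Proposition \ref{firstpresoci} the presentation $\langle C\mid U\rangle$ defines $\OCI_n$. Since $\OCI_n$ is a submonoid of $\ODI_n$ generated by $x,y,e_1,\ldots,e_n$, its generators already satisfy every relation of $U$; hence $R_1$--$R_5$ and $R_{11}$ are satisfied in $\ODI_n$ with nothing further to check. Thus it remains only to confirm the relations $R_6$--$R_{10}$, all of which involve the rank-$2$ generators $x_i$ and $y_i$.

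For these I would compute both sides on the tiny domains involved, using $\dom(x_i)=\{1,i+1\}$, $\im(x_i)=\{1,n-i+1\}$ and the mirror facts for $y_i=x_i^{-1}$. For instance $x_iy_i$ fixes $1$ and sends $i+1\mapsto n-i+1\mapsto i+1$, so it equals $\id_{\{1,i+1\}}=e_2\cdots e_ie_{i+2}\cdots e_n$, giving $R_6$; trimming $x_i$ by $e_{n-i+1}$, or $y_i$ by $e_{i+1}$, kills the non-fixed point and leaves $\id_{\{1\}}=e_2\cdots e_n$, giving $R_{10}$; and the side conditions $j\ne n-i+1$ in $R_7$ and $j\ne i+1$ in $R_8$ are precisely what guarantees that the relevant $e_j$ removes no point of the image or domain. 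Two economies streamline the bookkeeping: each relation occurs in an $x_i$/$y_i$ pair interchanged by applying $(\cdot)^{-1}$ (note $e_j^{-1}=e_j$ and $x^{-1}=y$), so one member of each pair suffices; and the cumulative effect of a block of $e_j$'s is recorded once as a single partial identity. The main obstacle is $R_9$, whose right-hand side $x^{n-2i}e_{n-2i+1}\cdots e_{n-i}e_{n-i+2}\cdots e_n$ is not a partial identity: here one must track how the string of $e_j$'s cuts the domain of $x^{n-2i}$ down to the single surviving point $i+1$, whose image $i+1\mapsto n-i+1$ then matches the rank-$1$ map $e_1x_i=x_ie_1$. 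Once this one computation is organised, the remaining cases of $R_9$ and all of $R_6$--$R_{10}$ follow by the same routine.
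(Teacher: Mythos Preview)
Your proposal is correct and matches the paper's approach: the paper declares the lemma ``a routine matter to check'' and gives no argument at all, so your direct verification of each relation $R_1$--$R_{11}$ is exactly what is intended, only with considerably more detail than the paper supplies. Your shortcut of invoking Proposition~\ref{firstpresoci} for $R_1$--$R_5$ and $R_{11}$, the use of inversion to halve the $x_i/y_i$ cases, and the identification of $R_9$ as the only relation requiring real care are all sound and appropriate.
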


Therefore, the conditions of Proposition \ref{ruskuc} are satisfied and so we have: 

\begin{theorem}\label{firstpresoc} 
The monoid $\ODI_n$ is defined by the presentation $\langle A \mid R\rangle$ on $2n+\frac{1-(-1)^n}{2}$ generators and 
$\frac{1}{2}(5n^2-(1+2(-1)^n)n+(-1)^{n+1}+5)$ relations. 
\end{theorem}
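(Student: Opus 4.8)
The plan is to invoke the Guess and Prove method of Proposition~\ref{ruskuc}, applied with $M=\ODI_n$, the generating set $X=\{x,y,e_1,\ldots,e_n,x_1,\ldots,x_{\lfloor\frac{n-1}{2}\rfloor},y_1,\ldots,y_{\lfloor\frac{n-1}{2}\rfloor}\}$, the alphabet $A$, the relation set $R$, and the set of words $W=W_0\cup W_1\cup W_2$ already introduced. I would verify its three hypotheses in turn; the substantive work has in fact been isolated in the preceding lemmas, so the present argument is mostly assembly.

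Condition~1, that the generators of $\ODI_n$ satisfy (via $\varphi$) every relation of $R$, is exactly the content of Lemma~\ref{genreloci}, so nothing further is required there.

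For condition~2 I would first observe that, since $U$ consists of the relations $R_1$--$R_5$ together with $R_{11}$ read over the subalphabet $C\subseteq A$, one has $\rho_U\subseteq\rho_R={\approx}$; hence every consequence of $U$ is a consequence of $R$. Now let $w\in A^*$ be arbitrary. By Lemma~\ref{stepplus} there is $w'\in C^*\cup W_1\cup W_2$ with $w\approx w'$. If $w'\in W_1\cup W_2\subseteq W$, we are done. If instead $w'\in C^*$, then, because $\langle C\mid U\rangle$ is a presentation for $\OCI_n$ (Proposition~\ref{firstpresoci}) with set of forms $W_0$, there is a unique $w''\in W_0$ with $w'\,\rho_U\,w''$, whence $w'\approx w''$ and so $w\approx w''\in W_0\subseteq W$. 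This establishes condition~2.

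For condition~3 I would compute $|W|$. Since $|W_0|=|\OCI_n|=3\cdot2^{n}-2n-2$ and $|W_1\cup W_2|=\frac{1}{6}(n+1)n(n-1)-\frac{1}{8}(1+(-1)^n)n^2$, and since every word of $W_1\cup W_2$ contains a letter of $A\setminus C$ and thus lies outside $C^*\supseteq W_0$ (so the union is disjoint), formula~(\ref{cardodi}) yields $|W|=|W_0|+|W_1\cup W_2|=|\ODI_n|$; in particular $|W|\leqslant|M|$. With all three conditions in hand, Proposition~\ref{ruskuc} gives that $\ODI_n$ is defined by $\langle A\mid R\rangle$, while the stated counts of $2n+\frac{1-(-1)^n}{2}$ generators and $\frac{1}{2}(5n^2-(1+2(-1)^n)n+(-1)^{n+1}+5)$ relations are merely $|A|$ and $|R|$, recorded earlier. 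The genuine obstacle in the whole development is not this final bookkeeping but the reduction step, Lemma~\ref{stepplus}, whose long case analysis guarantees that right-multiplying any form by a generator returns one to $C^*\cup W_1\cup W_2$; once that is granted, the theorem follows. A secondary point worth double-checking is the exact equality $|W|=|\ODI_n|$, since (as noted after Proposition~\ref{ruskuc}) it certifies that $W$ is a genuine set of forms rather than merely a spanning set.
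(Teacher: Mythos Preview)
Your proof is correct and follows essentially the same approach as the paper: apply Proposition~\ref{ruskuc} with $W=W_0\cup W_1\cup W_2$, invoking Lemma~\ref{genreloci} for condition~1, Lemma~\ref{stepplus} together with the set of forms $W_0$ for $\langle C\mid U\rangle$ for condition~2, and the cardinality computation via~(\ref{cardodi}) for condition~3. Your added remarks that $\rho_U\subseteq\rho_R$ and that $W_0$ is disjoint from $W_1\cup W_2$ make explicit two small points the paper leaves implicit, but otherwise the arguments coincide.
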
 

\smallskip 

Next, by using Tietze transformations and applying Proposition \ref{tietze}, we deduce from the above presentation for $\ODI_n$ 
a new presentation on a minimum size set of generators of $\ODI_n$ given in Proposition \ref{gensetsrank}. 

Let us consider the alphabet 
$$
B=\{x,y,e_2,\ldots,e_{n-1},x_1,x_2,\ldots,x_{\lfloor\frac{n-1}{2}\rfloor},y_1,y_2,\ldots,y_{\lfloor\frac{n-1}{2}\rfloor}\}=A\setminus\{e_1,e_n\}.  
$$ 
Basically, we first apply T4 with each of the relations $R_2$ and then, of the resulting relations, we eliminate the trivial ones and some deduced from others.  

This procedure was applied in \cite{Fernandes:2022sub} to the set $U$ of relations ($R_1$ to $R_5$ together with relation $R_{11}$) 
on the alphabet $C=\{x,y,e_1,\ldots,e_n\}$, having resulted in the following set of $\frac{1}{2}(n^2+3n)$ monoid relations (which we can consider on the alphabet $B$): 
\begin{description}

\item $(V_1)$ $e_i^2=e_i$, for $2\leqslant i\leqslant n-1$; 

\item $(V_2)$ $xyx=x$ and $yxy=y$; 

\item $(V_3)$ $yx^2y=xy^2x$;  

\item $(V_4)$ $e_ie_j=e_je_i$, for $2\leqslant i<j\leqslant n-1$; 

\item $(V_5)$ $xye_i=e_ixy$ and $yxe_i=e_iyx$, for $2\leqslant i\leqslant n-1$; 

\item $(V_6)$ $xe_{i+1}=e_ix$, for $2\leqslant i\leqslant n-2$; 

\item $(V_7)$ $x^2y=e_{n-1}x$ and $yx^2=xe_2$; 

\item $(V_8)$ $yxe_2\cdots e_{n-1}xy=xe_2\cdots e_{n-1}xy$.

\end{description}

\smallskip 

Performing the same procedure to relations $R_6$ to $R_{10}$ on the alphabet $A$, we may routinely obtain the following 
$2n^2-(2+(-1)^n)n-\frac{1}{2}(3+(-1)^n)$ monoid relations on the alphabet $B$: 
\begin{description}

\item $(V_9)$ $x_{i}y_{i}= e_{2}\cdots e_{i}e_{i+2}\cdots e_{n-1}xy$, for $1\leqslant i \leqslant \lfloor\frac{n-1}{2}\rfloor$; \\ 
$y_{1}x_{1}=e_{2}\cdots e_{n-1}$; 
$y_{i}x_{i}=e_{2}\cdots e_{n-i}e_{n-i+2}\cdots e_{n-1}xy$, for $2\leqslant i \leqslant \lfloor\frac{n-1}{2}\rfloor$; 

\item $(V_{10})$ $x_{i}e_{j}=x_{i}$ and $e_{j}y_{i}=y_{i}$, 
for $1\leqslant i \leqslant \lfloor\frac{n-1}{2}\rfloor$, $2\leqslant j\leqslant n-1$ and $j\ne n-i+1$; 

\item $(V_{11})$ $e_{j}x_{i}=x_{i}$ and $y_{i}e_{j}=y_{i}$,
for $1\leqslant i \leqslant \lfloor\frac{n-1}{2}\rfloor$, $2\leqslant j\leqslant n-1$ and $j\ne i+1$; 

\item $(V_{12})$ $x_{i}xy=xyx_i=x_i$ and $xyy_{i}=y_ixy=y_{i}$, for $2\leqslant i \leqslant \lfloor\frac{n-1}{2}\rfloor$; $xyx_1=x_1$ and $y_1xy=y_1$; 

\item $(V_{13})$ $yxx_{1}=x_{1}yx=x^{n-2}e_{n-1}$; 
$yxx_{i}=x_{i}yx=x^{n-2i}e_{n-2i+1}\cdots e_{n-i}e_{n-i+2}\cdots e_{n-1}xy$, for $2\leqslant i \leqslant \lfloor\frac{n-1}{2}\rfloor$; 
$yxy_{i}=y_{i}yx=y^{n-2i+1}xe_2\cdots e_ie_{i+2}\cdots e_{2i}$, for $1\leqslant i \leqslant \lfloor\frac{n-1}{2}\rfloor$; 

\item $(V_{14})$ $x_{1}xy=e_{2}x_{1} = y_{1}e_{2}=xyy_{1} = e_2\cdots e_{n-1}xy$; 
$x_{i}e_{n-i+1}=e_{i+1}x_{i} = y_{i}e_{i+1}=e_{n-i+1}y_{i} = e_2\cdots e_{n-1}xy$, for $2\leqslant i \leqslant \lfloor\frac{n-1}{2}\rfloor$. 
\end{description}

Thus, defining $V$ as the set of monoid relations on the alphabet $B$ consisting 
of relations $V_1$ to $V_{14}$, 
we have: 

\begin{theorem}\label{rankpresodi} 
The monoid $\ODI_n$ is defined by the presentation $\langle B\mid V\rangle$ on $2n-\frac{3+(-1)^n}{2}$ generators 
and $\frac{1}{2}(5n^2-(1+2(-1)^n)n+(-1)^{n+1}-3)$ relations. 
\end{theorem}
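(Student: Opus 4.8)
The plan is to invoke Proposition \ref{tietze}: since Theorem \ref{firstpresoc} already establishes that $\ODI_n$ is defined by $\langle A\mid R\rangle$, it suffices to transform $\langle A\mid R\rangle$ into $\langle B\mid V\rangle$ by a finite sequence of elementary Tietze transformations. The whole point is to eliminate the two generators $e_1$ and $e_n$, which are exactly the letters absent from $B=A\setminus\{e_1,e_n\}$. Reading the two relations of $R_2$ as $e_n=xy$ and $e_1=yx$, with $xy\in(A\setminus\{e_n\})^*$ and $yx\in(A\setminus\{e_1\})^*$, I would first apply $(T4)$ twice to delete $e_n$ and $e_1$ together with the two relations of $R_2$, replacing throughout every occurrence of $e_n$ by $xy$ and of $e_1$ by $yx$. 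This lands the presentation on the alphabet $B$, the surviving relations being the substituted images of $R_1,R_3,R_4,R_5$ and $R_6$--$R_{11}$.

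I would then split the work into two blocks. The block arising from $R_1,R_3,R_4,R_5,R_{11}$ is precisely the set $U$ (with $R_2$ removed) over the alphabet $C$, and the reduction of these substituted relations to $V_1$--$V_8$ -- deleting the trivial ones and those that are consequences of the others, via $(T1)$ and $(T2)$ -- is exactly the computation already carried out in \cite{Fernandes:2022sub}; I would simply invoke that reference. It then remains to treat the new block $R_6$--$R_{10}$, substituting $e_n\mapsto xy$ and $e_1\mapsto yx$ in each relation and reading off the outcome over $B$. The right-hand sides of $R_6$, $R_9$ and $R_{10}$ end with $e_n$ (and, in the $y_i$-case of $R_9$, begin with $e_1$), so the substitution merely appends $xy$, or turns a leading $e_1$ into $yx$, which then fuses with the preceding power $y^{n-2i}$ to produce $y^{n-2i+1}x$, as in $V_{13}$; the relations of $R_7$ and $R_8$ with $2\leqslant j\leqslant n-1$ involve neither $e_1$ nor $e_n$ and so pass unchanged into $V_{10}$ and $V_{11}$.

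The delicate part, where I expect most of the care to be required, is the behaviour at the boundary indices. After substitution the cases $j=n$ of $R_7$ and $R_8$ become $x_ixy=x_i$, $xyx_i=x_i$, $xyy_i=y_i$ and $y_ixy=y_i$; since $j=n$ coincides with the excluded value $n-i+1$ exactly when $i=1$, the relations $x_1e_n=x_1$ and $e_ny_1=y_1$ are simply absent for $i=1$, which is precisely why $V_{12}$ lists four relations for $i\geqslant2$ but only $xyx_1=x_1$ and $y_1xy=y_1$ for $i=1$. The same $i=1$ versus $i\geqslant2$ dichotomy, depending on whether $n-i+1$ equals $n$ (so that $e_{n-i+1}$ is itself replaced) and on whether a trailing $e_n$ or a leading $e_1$ survives, produces the split forms displayed in $V_9$, $V_{13}$ and $V_{14}$. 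Matching each substituted relation against the displayed list shows that $R_6$--$R_{10}$ transform exactly into $V_9$--$V_{14}$, with no further deletions.

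Finally, Proposition \ref{tietze} yields that $\langle B\mid V\rangle$ defines $\ODI_n$, being obtained from $\langle A\mid R\rangle$ by finitely many Tietze transformations. The generator count is immediate, $|B|=|A|-2=2n-\frac{3+(-1)^n}{2}$. For the relation count, the only net reduction occurs in the first block, where the two $(T4)$ steps consume the two relations of $R_2$ and two further substituted relations are deleted as consequences of the others, so that $U$, with its $\frac12(n^2+3n+8)$ relations, becomes $V_1$--$V_8$ with $\frac12(n^2+3n)$; the block $R_6$--$R_{10}$, by contrast, maps bijectively onto $V_9$--$V_{14}$. Hence $|V|=|R|-4=\frac{1}{2}(5n^2-(1+2(-1)^n)n+(-1)^{n+1}-3)$, as claimed.
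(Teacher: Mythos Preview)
Your proposal is correct and follows essentially the same approach as the paper: starting from the presentation $\langle A\mid R\rangle$ of Theorem~\ref{firstpresoc}, applying $(T4)$ twice via the relations $R_2$ to eliminate $e_1$ and $e_n$, invoking \cite{Fernandes:2022sub} for the reduction of the $U$-block to $V_1$--$V_8$, and then routinely tracking $R_6$--$R_{10}$ through the substitutions to obtain $V_9$--$V_{14}$. You actually give more detail on the boundary cases ($i=1$ versus $i\geqslant 2$) than the paper, which simply asserts that the relations $V_9$--$V_{14}$ are ``routinely'' obtained.
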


\section{Presentations for $\MDI_n$}\label{presmdi}

We begin this section by determining a presentation for $\MDI_n$ on $2n-\frac{1+(-1)^n}{2}$ generators. 
For this purpose, we will apply Proposition \ref{overpresentation}. 
Next, by using Tietze transformations, 
we deduce another presentation for $\MDI_n$ on $2+3\lfloor\frac{n-1}{2}\rfloor$ generators. 

\smallskip 

Let us consider the alphabet $\bar B=B\cup\{h\}=\{h,x,y,e_2,\ldots,e_{n-1},x_1,\ldots,x_{\lfloor\frac{n-1}{2}\rfloor},y_1,\ldots,y_{\lfloor\frac{n-1}{2}\rfloor}\}$
and let $\bar\varphi:\bar B^*\longrightarrow \MDI_n$ be the homomorphism of monoids that extends the mapping 
$\bar B\longrightarrow \MDI_n$
defined by
$$
h\longmapsto h ,\quad x\longmapsto x ,\quad y\longmapsto y, \quad e_i\longmapsto e_i, \mbox{~for $2\leqslant i\leqslant n-1$}, 
\quad x_j\longmapsto x_j~\text{and}~y_j\longmapsto y_j, \mbox{~for $1\leqslant j\leqslant \lfloor\frac{n-1}{2}\rfloor$}. 
$$
Notice that  $\bar B\bar\varphi$ is a generating set of $\MDI_n$ with $2n-\frac{1+(-1)^n}{2}$ elements. 

Next, observe that $\emptyset=e_1e_2\cdots e_{n-1}e_n$,  $\binom{1}{1}=e_2\cdots e_{n-1}e_n$ and 
$$\textstyle 
\binom{i}{j}=\binom{i}{1}\binom{1}{1}\binom{1}{j} = e_{i+1}\cdots e_n y^{i-1} e_2\cdots e_{n-1}e_n x^{j-1}e_{j+1}\cdots e_n, 
$$
for $1\leqslant i,j\leqslant n$. 
Therefore, let $u_0=e_2\cdots e_{n-1}xy\in B^*$ and 
let $W'_1$ be the subset of $B^*$ formed by the following $1+n^2$ words: 
\begin{enumerate}
\item $yxu_0$;
\item $e_{i+1}\cdots e_{n-1}x y^i u_0 x^{j-1}e_{j+1}\cdots e_{n-1}xy$, for $1\leqslant i,j\leqslant n-1$; 
\item $e_{i+1}\cdots e_{n-1}x y^i u_0 x^{n-1}$, for $1\leqslant i\leqslant n-1$; 
\item $y^{n-1} u_0 x^{j-1}e_{j+1}\cdots e_{n-1}xy$, for $1\leqslant j\leqslant n-1$; and 
\item $y^{n-1}u_0x^{n-1}$. 
\end{enumerate}
Then $\bar\varphi$ is a bijection from $W'_1$ onto $\{\emptyset\}\cup\{\binom{i}{j}\mid 1\leqslant i,j\leqslant n\}$ 
and so we can choose a set of forms $W'$ for the presentation $\langle B\mid V\rangle$ of $\ODI_n$
such that $W'$ contains the empty word and $W'_1\subset W'$. 
Let $W'_2=W'\setminus W'_1$ and consider the subset 
$\bar{W}=W'\cup\{wh\mid w\in W'_2\}$ of $\bar B^*$. 

Notice that $|\bar W|=|W'| + |W'_2|=|W'| + |W'| -|W'_1| =2|\ODI_n|-n^2-1=|\MDI_n|$, by (\ref{cardodi}) and (\ref{cardmdi}). 

\smallskip

Let $1\leqslant i\leqslant \lfloor\frac{n-1}{2}\rfloor$. Then 
$$
hx_ih = \begin{pmatrix} n-i & n\\ i & n \end{pmatrix} = y^{n-i-1}x_ix^{i-1}
\quad\text{and}\quad
hy_ih = \begin{pmatrix} i & n\\ n-i & n \end{pmatrix} = y^{i-1}y_ix^{n-i-1}. 
$$
On the other hand, we also have 
$$
hxh=y, \quad hyh=x, \quad he_i h=e_{n-i+1}, \mbox{~for $1\leqslant i\leqslant n$}, \quad\text{and}\quad 
e_2\cdots e_nh=\binom{1}{n}=x^{n-1}. 
$$
Therefore, the monoid relations (on the alphabet $\bar B$)
$$
h^2=1, \quad hx=yh, \quad hy=xh, \quad 
\mbox{$he_i =e_{n-i+1}h$, for $2\leqslant i\leqslant n-1$},
$$
$$
\mbox{$hx_i=y^{n-i-1}x_ix^{i-1}h$ and $hy_i=y^{i-1}y_ix^{n-i-1}h$, for $1\leqslant i\leqslant \lfloor\frac{n-1}{2}\rfloor$,} 
\quad\text{and}\quad 
e_2\cdots e_{n-1}xyh=x^{n-1}
$$
are all satisfied (via $\bar\varphi$) by the generating set 
$\{h,x,y,e_2,\ldots,e_{n-1},x_1,\ldots,x_{\lfloor\frac{n-1}{2}\rfloor},y_1,\ldots,y_{\lfloor\frac{n-1}{2}\rfloor}\}$
of $\MDI_n$. 

Now, let $\bar V$ be the set of monoid relations $V$ (relations $V_1$ to $V_{14}$ considered on the alphabet $\bar B$) together with 
the following $n+\lfloor\frac{n+1}{2}\rfloor+\frac{1-(-1)^n}{2}$ monoid relations on the alphabet $\bar B$: 
\begin{description}

\item $(\bar V_0)$ $h^2=1$; 

\item $(\bar V_1)$  $hx=yh$; $he_i =e_{n-i+1}h$, for $2\leqslant i\leqslant \lfloor\frac{n+1}{2}\rfloor$; \\ 
$hx_i=y^{n-i-1}x_ix^{i-1}h$ and $hy_i=y^{i-1}y_ix^{n-i-1}h$, for $1\leqslant i\leqslant \lfloor\frac{n-1}{2}\rfloor$; 

\item $(\bar V_2)$ $e_2\cdots e_{n-1}xyh=x^{n-1}$. 
\end{description}

Since the relation $hy=xh$ is a consequence of $h^2=1$ and $hx=yh$ 
and the relations $he_i =e_{n-i+1}h$, with $2\leqslant i\leqslant n-1$, are consequences of 
$h^2=1$ and $he_i =e_{n-i+1}h$, with $2\leqslant i\leqslant \lfloor\frac{n+1}{2}\rfloor$, by Proposition \ref{overpresentation}, 
we conclude that: 

\begin{theorem}\label{firstpresmdi} 
The monoid $\MDI_n$ is defined by the presentation $\langle \bar B\mid \bar V\rangle$ on $2n-\frac{1+(-1)^n}{2}$ generators 
and $\frac{1}{2}(5n^2+(2-2(-1)^n)n-\frac{3+5(-1)^n}{2})$ relations. 
\end{theorem}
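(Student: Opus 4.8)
The plan is to apply Proposition~\ref{overpresentation} directly, since all the structural hypotheses have been assembled immediately before the statement. I would set $M=\MDI_n$, $S=\ODI_n$, and $y=h$ (the element of order $2$ with $h^2=1$), noting that $\MDI_n$ is generated by $\ODI_n$ together with $h$; this is clear from Proposition~\ref{gensetsrank}, since the displayed generating set for $\MDI_n$ consists of $h$ adjoined to generators of $\ODI_n$. The role of the submonoid presentation $\langle A\mid R\rangle$ (equivalently $\langle B\mid V\rangle$) for $S=\ODI_n$ is played by Theorem~\ref{rankpresodi}, and the element $u_0=e_2\cdots e_{n-1}xy\in B^*$ together with the decomposition $W'=W'_1\cup W'_2$ of the chosen set of forms plays the role required by the proposition.

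First I would verify the two families of relations $(\bar R_1)$ and $(\bar R_2)$ in the notation of Proposition~\ref{overpresentation}. The relations $(\bar R_1)$, of the form $ba_i=v_ib$, are exactly the relations listed in $(\bar V_1)$ (and the two suppressed relations $hy=xh$, $he_i=e_{n-i+1}h$ for the larger range of $i$): for each generator $a_i$ of $\ODI_n$ one checks that $h\cdot a_i = v_i\cdot h$ holds in $\MDI_n$, where $v_i$ is the word on the right. These are precisely the conjugation-by-$h$ computations displayed before the statement, namely $hxh=y$, $hyh=x$, $he_ih=e_{n-i+1}$, $hx_ih=y^{n-i-1}x_ix^{i-1}$ and $hy_ih=y^{i-1}y_ix^{n-i-1}$; rewriting each as $ha_i=v_ih$ using $h^2=1$ gives $(\bar V_1)$. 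The single relation $(\bar R_2)$, namely $u_0b=v_0$, is exactly $(\bar V_2)$: $e_2\cdots e_{n-1}xy\,h = x^{n-1}$, which is the identity $\binom{1}{1}h=\binom{1}{n}=x^{n-1}$ computed above. Each of these is a routine verification in $\MDI_n$ by composing the partial permutations, so condition 1 of Proposition~\ref{overpresentation} (that $Y$ satisfies $\bar R$) is met.

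Next I would confirm the counting hypotheses. The set of forms $W'$ for $\langle B\mid V\rangle$ has been chosen to contain the empty word and to contain $W'_1$ as a subset, where $\bar\varphi$ restricts to a bijection from $W'_1$ onto $\{\emptyset\}\cup\{\binom{i}{j}\mid 1\leqslant i,j\leqslant n\}$, a set of size $1+n^2$; and $u_0$ is a factor of every word in $W'_1$ by construction (each listed word contains the block $u_0$). Thus $W=W'$, $W_1=W'_1$, $W_2=W'_2=W'\setminus W'_1$, and the hypothesis that $W$ contains the empty word holds. The size computation $|\bar W|=2|\ODI_n|-n^2-1=|\MDI_n|$, which was recorded just before the statement using the cardinality formulas~(\ref{cardodi}) and~(\ref{cardmdi}), gives $|\bar W|\leqslant|\MDI_n|$. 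By Proposition~\ref{overpresentation}, $\MDI_n$ is therefore defined by $\langle\bar B\mid\bar R\rangle$ with $\bar R=V\cup\bar R_0\cup\bar R_1\cup\bar R_2$.

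Finally I would reconcile $\bar R$ with the advertised relation set $\bar V$ and tally the relations. The passage to $\bar V$ replaces the full families $\bar R_1$ by the trimmed families in $(\bar V_1)$; the justification, stated in the paragraph before the theorem, is that $hy=xh$ is a consequence of $h^2=1$ and $hx=yh$ (conjugate and cancel), and that $he_i=e_{n-i+1}h$ for $\lfloor\frac{n+1}{2}\rfloor<i\leqslant n-1$ follows from $h^2=1$ together with the instances $he_j=e_{n-j+1}h$ for $2\leqslant j\leqslant\lfloor\frac{n+1}{2}\rfloor$ (applying the stored relation to the complementary index $j=n-i+1$ and conjugating). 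Hence $\langle\bar B\mid\bar R\rangle$ and $\langle\bar B\mid\bar V\rangle$ define the same monoid by elementary Tietze transformations (Proposition~\ref{tietze}). The generator count is $|B|+1=2n-\frac{3+(-1)^n}{2}+1=2n-\frac{1+(-1)^n}{2}$, and the relation count is $|V|+\bigl(n+\lfloor\frac{n+1}{2}\rfloor+\frac{1-(-1)^n}{2}\bigr)$, which after substituting the count $|V|=\frac{1}{2}(5n^2-(1+2(-1)^n)n+(-1)^{n+1}-3)$ from Theorem~\ref{rankpresodi} and simplifying yields the stated $\frac{1}{2}(5n^2+(2-2(-1)^n)n-\frac{3+5(-1)^n}{2})$. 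I do not expect any serious obstacle here: the entire argument is an application of the pre-packaged Proposition~\ref{overpresentation}, and the only care needed is in the bookkeeping of the trimmed relation families and the arithmetic of the counts, together with the several direct verifications of the conjugation relations in $\MDI_n$.
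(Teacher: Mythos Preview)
Your proposal is correct and follows essentially the same route as the paper: both apply Proposition~\ref{overpresentation} with $M=\MDI_n$, $S=\ODI_n$, $y=h$, the presentation $\langle B\mid V\rangle$ from Theorem~\ref{rankpresodi}, the factor $u_0=e_2\cdots e_{n-1}xy$, and the decomposition $W'=W'_1\cup W'_2$ constructed just before the statement, and then trim the redundant instances of $(\bar R_1)$ via $h^2=1$ to arrive at $\bar V$. One small quibble: your appeal to Proposition~\ref{gensetsrank} to justify that $\MDI_n=\langle\ODI_n,h\rangle$ is slightly off, since the minimum-size generating set for $\MDI_n$ listed there is not literally $h$ adjoined to the listed generators of $\ODI_n$; it is cleaner to note directly (as the paper does) that $\bar B\bar\varphi=B\varphi\cup\{h\}$ generates $\MDI_n$.
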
 

\smallskip 

Next, like in Section \ref{presodi}, by using Tietze transformations and applying Proposition \ref{tietze}, we deduce from the presentation $\langle \bar B\mid \bar V\rangle$ 
of $\MDI_n$ a new presentation on a minimum size set of generators of $\MDI_n$ provided by Proposition \ref{gensetsrank}. 
So, consider the alphabet 
$$
\bar B'=\{h,x,e_2,\ldots,e_{\lfloor\frac{n+1}{2}\rfloor},x_1,x_2,\ldots,x_{\lfloor\frac{n-1}{2}\rfloor},y_1,y_2,\ldots,y_{\lfloor\frac{n-1}{2}\rfloor}\}. 
$$ 
Hence, since $y=hxh$ and $e_i =he_{n-i+1}h$, for $\lfloor\frac{n+1}{2}\rfloor+1\leqslant i\leqslant n-1$ (as transformations), 
we can apply T1 by adding the relations  $y=hxh$ and $e_i =he_{n-i+1}h$, for $\lfloor\frac{n+1}{2}\rfloor+1\leqslant i\leqslant n-1$. 
Next, we apply T4 with each of these relations and then, of the resulting relations, we eliminate the trivial ones and some deduced from others.  
Performing this procedure to $\bar V$, we may routinely obtain the following set $\bar V'$ 
of $2n^2+\frac{7-(-1)^n}{4}n-2(-1)^n-1$ monoid relations on the alphabet $\bar B'$: 
\begin{description}

\item $(V'_1)$ $e_i^2=e_i$, for $2\leqslant i\leqslant \lfloor\frac{n+1}{2}\rfloor$; 

\item $(V'_2)$ $(xh)^2x=x$; 

\item $(V'_3)$ $xhx^2hxh=h xhx^2hx$;

\item $(V'_4)$ $e_ie_j=e_je_i$, for $2\leqslant i<j\leqslant \lfloor\frac{n+1}{2}\rfloor$; 
$e_ihe_jh=he_jhe_i$, for $2\leqslant i \leqslant \lfloor\frac{n+1}{2}\rfloor$ and $2\leqslant j \leqslant \lfloor\frac{n}{2}\rfloor$;

\item $(V'_5)$ $(xh)^2e_i=e_i(xh)^2$ and $(hx)^2e_i=e_i(hx)^2$, for $2\leqslant i\leqslant \lfloor\frac{n+1}{2}\rfloor$; 

\item $(V'_6)$ $xe_{i+1}=e_ix$, for $2\leqslant i\leqslant \lfloor\frac{n-1}{2}\rfloor$; 
$xhe_{\lfloor\frac{n}{2}\rfloor}h=e_{\lfloor\frac{n+1}{2}\rfloor}x$; 
$xhe_ih=he_{i+1}hx$, for $2\leqslant i\leqslant \lfloor\frac{n-2}{2}\rfloor$; 

\item $(V'_7)$ $x(xh)^2=he_2hx$ and $(hx)^2x=xe_2$; 

\item $(V'_8)$ $(hx)^2 e_2\cdots e_{\lfloor\frac{n+1}{2}\rfloor} h e_2\cdots e_{\lfloor\frac{n}{2}\rfloor}(hx)^2 = 
x e_2\cdots e_{\lfloor\frac{n+1}{2}\rfloor} h e_2\cdots e_{\lfloor\frac{n}{2}\rfloor}(hx)^2$; 

\item $(V'_9)$ $x_iy_i=e_2\cdots e_i e_{i+2}\cdots e_{\lfloor\frac{n+1}{2}\rfloor} h e_2\cdots e_{\lfloor\frac{n}{2}\rfloor} h (xh)^2$, 
for  $1\leqslant i\leqslant \lfloor\frac{n-1}{2}\rfloor$; 
$y_1x_1=e_2\cdots e_{\lfloor\frac{n+1}{2}\rfloor} h e_2\cdots e_{\lfloor\frac{n}{2}\rfloor} h$; 
$y_ix_i=e_2\cdots e_{\lfloor\frac{n+1}{2}\rfloor} h e_2\cdots e_{i-1} e_{i+1}\cdots e_{\lfloor\frac{n}{2}\rfloor} h(xh)^2$, 
for  $2\leqslant i\leqslant \lfloor\frac{n-1}{2}\rfloor$; 

\item $(V'_{10})$ $x_{i}e_{j}=x_{i}$ and $e_{j}y_{i}=y_{i}$, 
for $1\leqslant i \leqslant \lfloor\frac{n-1}{2}\rfloor$ and $2\leqslant j\leqslant \lfloor\frac{n+1}{2}\rfloor$; \\
$x_{i}he_{j}h=x_{i}$ and $he_{j}hy_{i}=y_{i}$, 
for $1\leqslant i \leqslant \lfloor\frac{n-1}{2}\rfloor$, $2\leqslant j\leqslant \lfloor\frac{n}{2}\rfloor$ and $j\neq i$; 

\item $(V'_{11})$ $e_{j}x_{i}=x_{i}$ and $y_{i}e_{j}=y_{i}$,
for $1\leqslant i \leqslant \lfloor\frac{n-1}{2}\rfloor$, $2\leqslant j\leqslant \lfloor\frac{n+1}{2}\rfloor$ and $j\ne i+1$; \\ 
$he_{j}hx_{i}=x_{i}$ and $y_{i}he_{j}h=y_{i}$,
for $1\leqslant i \leqslant \lfloor\frac{n-1}{2}\rfloor$ and $2\leqslant j\leqslant \lfloor\frac{n}{2}\rfloor$; 

\item $(V'_{12})$ $x_{i}(xh)^2=(xh)^2x_i=x_i$ and $(xh)^2y_{i}=y_i(xh)^2=y_{i}$, for $2\leqslant i \leqslant \lfloor\frac{n-1}{2}\rfloor$; 
$(xh)^2x_1=x_1$ and $y_1(xh)^2=y_1$;

\item $(V'_{13})$ $(hx)^2x_1 = x_1(hx)^2 = x^{n-2}he_2h$; \\ 
$(hx)^2x_i = x_i(hx)^2 = 
x^{n-2i} e_{n-2i+1}\cdots e_{\lfloor\frac{n+1}{2}\rfloor} h e_2\cdots e_{i-1}e_{i+1}\cdots e_{\lfloor\frac{n}{2}\rfloor} h (xh)^2$, 
for $2\leqslant i \leqslant \lfloor\frac{n-1}{2}\rfloor$; \\ 
$(hx)^2y_i = y_i(hx)^2 = h x^{n-2i+1} h x e_2\cdots e_ie_{i+2}\cdots e_{\lfloor\frac{n+1}{2}\rfloor} h e_{n-2i+1}\cdots e_{\lfloor\frac{n}{2}\rfloor}h$, 
for $1\leqslant i \leqslant \lfloor\frac{n-1}{2}\rfloor$; 

\item $(V'_{14})$ $x_{1}(xh)^2=e_{2}x_{1} = y_{1}e_{2}=(xh)^2y_{1} = e_2\cdots e_{\lfloor\frac{n+1}{2}\rfloor} h e_2\cdots e_{\lfloor\frac{n}{2}\rfloor}h(xh)^2$; \\
$x_{i}he_ih=e_{i+1}x_{i} = y_{i}e_{i+1}=he_ihy_{i} =  e_2\cdots e_{\lfloor\frac{n+1}{2}\rfloor} h e_2\cdots e_{\lfloor\frac{n}{2}\rfloor}h(xh)^2$, 
for $2\leqslant i \leqslant \lfloor\frac{n-1}{2}\rfloor$; 

\item $(\bar V'_0)$ $h^2=1$; 

\item $(\bar V'_1)$ $he_{\lfloor\frac{n+1}{2}\rfloor}=e_{\lfloor\frac{n+1}{2}\rfloor}h$, if $n$ is odd;  \\ 
$hx_i=hx^{n-i-1}hx_ix^{i-1}h$ and $hy_i=hx^{i-1}hy_ix^{n-i-1}h$, for $1\leqslant i\leqslant \lfloor\frac{n-1}{2}\rfloor$; 

\item $(\bar V'_2)$ $e_2\cdots e_{\lfloor\frac{n+1}{2}\rfloor} h e_2\cdots e_{\lfloor\frac{n}{2}\rfloor}(hx)^2 = x^{n-1}$. 

\end{description}

Thus, we have: 

\begin{theorem}\label{rankpresmdi} 
The monoid $\MDI_n$ is defined by the presentation $\langle \bar B' \mid \bar V'\rangle$ on $2+3\lfloor\frac{n-1}{2}\rfloor$ generators 
and $2n^2+\frac{7-(-1)^n}{4}n-2(-1)^n-1$ relations. 
\end{theorem}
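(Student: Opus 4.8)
The plan is to derive the presentation $\langle \bar B' \mid \bar V'\rangle$ from the presentation $\langle \bar B \mid \bar V\rangle$ already established for $\MDI_n$ in Theorem \ref{firstpresmdi}, entirely by means of elementary Tietze transformations, and then to invoke Proposition \ref{tietze} to conclude that the new presentation defines the same monoid. This mirrors exactly the passage from Theorem \ref{firstpresoc} to Theorem \ref{rankpresodi} carried out in Section \ref{presodi}. The generators to be removed are $y$ and the letters $e_i$ with $\lfloor\frac{n+1}{2}\rfloor+1\leqslant i\leqslant n-1$, so that the surviving alphabet is precisely $\bar B'$.

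First I would justify the use of T1. Since $\langle \bar B \mid \bar V\rangle$ is a presentation for $\MDI_n$ via $\bar\varphi$, two words of $\bar B^*$ are $\rho_{\bar V}$-equivalent if and only if they have the same image under $\bar\varphi$. As $hxh=y$ and $he_{n-i+1}h=e_i$ hold as transformations in $\MDI_n$ (here $n-i+1\leqslant\lfloor\frac{n}{2}\rfloor$, so $e_{n-i+1}$ is a retained letter), the relations $y=hxh$ and $e_i=he_{n-i+1}h$, for $\lfloor\frac{n+1}{2}\rfloor+1\leqslant i\leqslant n-1$, are consequences of $\bar V$ and may be added by T1. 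Each of these relations has the eliminated letter isolated on the left and a word over $\bar B\setminus\{y,e_{\lfloor\frac{n+1}{2}\rfloor+1},\ldots,e_{n-1}\}$ on the right, so T4 applies with no circular dependency (every high-index $e_i$ is expressed through a low-index, retained $e_{n-i+1}$). Applying T4 successively, I would delete $y$ and the high-index $e_i$ and substitute $hxh$ for $y$ and $he_{n-i+1}h$ for $e_i$ throughout the remaining relations, repeatedly collapsing factors such as $(hxh)^k$ to $hx^kh$ by means of $h^2=1$.

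Next I would simplify the resulting relation set with T2. A number of the substituted relations become trivial: for instance $hx=yh$ turns into $hx=hx$; one relation of each symmetric pair (such as $yxy=y$ in $V_2$) becomes a consequence of its partner together with $h^2=1$; and each relation $he_i=e_{n-i+1}h$ of $\bar V_1$ collapses, except, when $n$ is odd, the central one $he_{\lfloor\frac{n+1}{2}\rfloor}=e_{\lfloor\frac{n+1}{2}\rfloor}h$, which survives. The relations $hx_i=y^{n-i-1}x_ix^{i-1}h$ and $hy_i=y^{i-1}y_ix^{n-i-1}h$ become $hx_i=hx^{n-i-1}hx_ix^{i-1}h$ and $hy_i=hx^{i-1}hy_ix^{n-i-1}h$, as recorded in $\bar V'_1$, while all remaining relations turn into the $h$-conjugated forms $V'_1$ through $V'_{14}$, $\bar V'_0$ and $\bar V'_2$, with index ranges splitting according to whether an index stays below $\lfloor\frac{n}{2}\rfloor$ or reaches $\lfloor\frac{n+1}{2}\rfloor$.

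The main obstacle will be the sheer bookkeeping of this substitution-and-cleanup step: one must track, relation by relation and with an eye on the parity of $n$, exactly which identities become trivial, which become redundant, and which survive in conjugated form, getting the recurring distinction between $\lfloor\frac{n}{2}\rfloor$ and $\lfloor\frac{n+1}{2}\rfloor$ precisely right. As a consistency certificate for the whole computation I would finally count the surviving relations and check that their number equals $2n^2+\frac{7-(-1)^n}{4}n-2(-1)^n-1$ and that the alphabet $\bar B'$ has $2+3\lfloor\frac{n-1}{2}\rfloor$ letters, in agreement with Proposition \ref{gensetsrank}; an exact match confirms that no relation was lost or spuriously retained. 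Proposition \ref{tietze} then yields that $\langle \bar B' \mid \bar V'\rangle$ defines $\MDI_n$.
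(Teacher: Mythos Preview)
Your proposal is correct and follows essentially the same route as the paper: starting from the presentation $\langle \bar B\mid \bar V\rangle$ of Theorem~\ref{firstpresmdi}, add via T1 the relations $y=hxh$ and $e_i=he_{n-i+1}h$ for $\lfloor\frac{n+1}{2}\rfloor+1\leqslant i\leqslant n-1$, eliminate these generators with T4, and then simplify with T2, invoking Proposition~\ref{tietze}. The paper treats the substitution-and-cleanup step as routine, exactly as you anticipate.
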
 

\section{Presentations for $\OPDI_n$}\label{presopdi}

As in both previous sections, we first determine a presentation for $\OPDI_n$ on an extended set of generators, namely, with $n+\lfloor\frac{n-1}{2}\rfloor+1$ generators, 
and then, through Tietze transformations, we deduce another presentation for $\OPDI_n$ on a minimum size set of generators, i.e. on $2+\lfloor\frac{n-1}{2}\rfloor$ generators. 

\smallskip 

Consider the alphabet $D=\{g,e_1,\ldots,e_n,x_1,\ldots,x_{\lfloor\frac{n-1}{2}\rfloor}\}$ and 
let $\psi:D^*\longrightarrow \OPDI_n$ be the homomorphism of monoids that extends the mapping 
$D\longrightarrow \OPDI_n$
defined by
$$
g\longmapsto g, \quad e_i\longmapsto e_i, \mbox{~for $1\leqslant i\leqslant n$}, 
\quad x_j\longmapsto x_j, \mbox{~for $1\leqslant j\leqslant \lfloor\frac{n-1}{2}\rfloor$}. 
$$
Let $Q$ be the set formed by the following 
$\frac{1}{2}(3n^2+(1-(-1)^n)n+3-\frac{1+(-1)^n}{2})$ monoid relations:  

\begin{description}

\item $(Q_1)$ $g^n=1$; 

\item $(Q_2)$ $e_{i}^{2}=e_{i}$,  for $1\leqslant i\leqslant n$; 

\item $(Q_3)$ $e_ie_j=e_je_i$, for $1\leqslant i<j\leqslant n$;

\item $(Q_4)$ $ge_1=e_ng$ and $ge_{i+1}=e_ig$, for $1\leqslant i\leqslant n-1$;

\item $(Q_5)$ $ge_1\cdots e_n=e_1\cdots e_n$;  

\item $(Q_6)$ $e_1x_i=x_ie_1=g^{n-2i}e_1\cdots e_{n-i}e_{n-i+2}\cdots e_n$,  for $1\leqslant i \leqslant \lfloor\frac{n-1}{2}\rfloor$;

\item $(Q_7)$ $x_{i}e_{j}=x_{i}$,  for $1\leqslant i \leqslant \lfloor\frac{n-1}{2}\rfloor$, $2\leqslant j\leqslant n$ and $j\ne n-i+1$;

\item $(Q_8)$ $e_{j}x_{i}=x_{i}$,  for $1\leqslant i \leqslant \lfloor\frac{n-1}{2}\rfloor$, $2\leqslant j\leqslant n$ and $j\ne i+1$; 

\item $(Q_9)$ $x_{i}e_{n-i+1}=e_{i+1}x_{i} = e_2\cdots e_n$, for $1\leqslant i \leqslant \lfloor\frac{n-1}{2}\rfloor$;

\item $(Q_{10})$ $(x_ig^i)^2=e_2\cdots e_ie_{i+2}\cdots e_n$, for $1\leqslant i \leqslant \lfloor\frac{n-1}{2}\rfloor$. 

\end{description}

Our aim is to show that the monoid $\OPDI_n$ is defined by the presentation $\langle D \mid Q\rangle$. 
As in Section \ref{presodi}, we will make use of results of \cite{Fernandes:2022sub}, this time in view to applying Proposition \ref{provingpresentation}. 

\smallskip 

We begin by noticing that it is a routine matter to check:  

\begin{lemma}\label{genrelopdi}
The set of generators $\{g,e_1,e_2,\ldots,e_n,x_1,\ldots,x_{\lfloor\frac{n-1}{2}\rfloor}\}$ 
of $\OPDI_n$ satisfies (via $\psi$) all relations from $Q$.
\end{lemma}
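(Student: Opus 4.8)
The plan is to verify Lemma~\ref{genrelopdi} by a direct, entirely routine computation: for each family of relations $Q_1$ through $Q_{10}$, I would show that the concrete transformations $g, e_1,\ldots,e_n, x_1,\ldots,x_{\lfloor\frac{n-1}{2}\rfloor}$ of $\OPDI_n$ (to which the letters of $D$ are mapped by $\psi$) satisfy the asserted equality as partial permutations. Since $\psi$ is a homomorphism, it suffices to evaluate both sides of each relation as elements of $\OPDI_n$ and check they coincide; there is no induction or structural argument needed, which is exactly why the statement is flagged as ``a routine matter to check''.

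Concretely, I would proceed family by family. Relations $Q_1$ through $Q_5$ involve only $g$ and the idempotents $e_i$, and these are immediate from the definitions: $g$ is the $n$-cycle of order $n$ (giving $Q_1$), each $e_i=\id_{\Omega_n\setminus\{i\}}$ is a partial identity (giving the idempotency $Q_2$ and the commuting $Q_3$, since $e_ie_j=\id_{\Omega_n\setminus\{i,j\}}$), and the conjugation rules $ge_1=e_ng$ and $ge_{i+1}=e_ig$ in $Q_4$ follow by tracking how the cyclic shift $g$ moves the single ``hole'' of each $e_i$ (indeed $e_i=g^{n-i}e_ng^i$ as already recorded in the preliminaries). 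Relation $Q_5$ reflects that $e_1\cdots e_n=\id_\emptyset$ is the empty map, which is absorbed by composition with $g$. For the relations $Q_6$ through $Q_{10}$ mixing in the rank-$2$ generators $x_i=\bigl(\begin{smallmatrix}1&1+i\\1&n-i+1\end{smallmatrix}\bigr)$, I would compute domains and images directly: $x_i$ has $\dom(x_i)=\{1,1+i\}$ and $\im(x_i)=\{1,n-i+1\}$, so pre- or post-composing with an $e_j$ either restricts away nothing (giving $Q_7$, $Q_8$ when $j$ avoids the relevant point) or kills a point to produce the rank-$1$ map $e_2\cdots e_n=\id_{\{1\}}$ (giving $Q_9$). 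Relation $Q_6$ expresses $e_1x_i=x_ie_1$ (the restriction of $x_i$ to $\dom$ minus the fixed point $1$) as a word in $g$ and idempotents, and $Q_{10}$ records that $(x_ig^i)^2$ is the idempotent $\id_{\Omega_n\setminus\{1,i+1\}}=e_2\cdots e_ie_{i+2}\cdots e_n$; both are verified by one explicit composition.

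There is no genuine obstacle here, only bookkeeping. The only points demanding a little care are the index arithmetic in $Q_6$ and $Q_{10}$ — one must confirm that the word $g^{n-2i}e_1\cdots e_{n-i}e_{n-i+2}\cdots e_n$ really evaluates to the partial identity $e_1x_i$ with the correct domain, and that the square $(x_ig^i)^2$ lands on the stated idempotent rather than a shifted one — so I would double-check these by evaluating on each point of the (two-element) domain. Given the small ranks involved and the explicit two-row descriptions of $g$, $h$, $e_i$, $x_i$ supplied in Section~\ref{presection}, every verification reduces to composing partial permutations on at most $n$ points, and the lemma follows without any essential difficulty; this is precisely the setup that makes it a preparatory step for the application of Proposition~\ref{provingpresentation}.
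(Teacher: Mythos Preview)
Your approach is correct and matches the paper's own treatment, which simply declares the lemma ``a routine matter to check'' and gives no further argument. Two small slips to correct when you write it up: $e_1x_i=\bigl(\begin{smallmatrix}1+i\\ n-i+1\end{smallmatrix}\bigr)$ is not a partial identity, and $(x_ig^i)^2=\id_{\{1,\,i+1\}}$ (domain $\{1,i+1\}$), not $\id_{\Omega_n\setminus\{1,i+1\}}$.
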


Observe that, as a consequence of the previous lemma, if $u,v\in D^*$ are such that the relation $u=v$ is a consequence of $Q$, 
then $u\psi=v\psi$. 

\smallskip 

Now, let us recall that the cyclic inverse monoid $\CI_n$ is generated by $\{g,e_1\}$ (see \cite{Fernandes:2022sub}) and so by $\{g,e_1,\ldots,e_n\}$. 

Let us consider the alphabet $D_0=\{g,e_1,\ldots,e_n\}$ and denote by $Q_0$ the subset of $Q$ consisting of relations $Q_1$ to $Q_5$.  
Then, we have: 

\begin{proposition}[{\cite[Theorem 2.6]{Fernandes:2022sub}}]\label{firstpresci} 
The monoid $\CI_n$ is defined by the presentation $\langle D_0 \mid Q_0\rangle$ on $n+1$ generators and $\frac{1}{2}(n^2+3n+4)$ relations. 
\end{proposition}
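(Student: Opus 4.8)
The plan is to apply the Guess and Prove method of Proposition~\ref{ruskuc} with $M=\CI_n$, generating set $X=\{g,e_1,\ldots,e_n\}$, alphabet $D_0$, relations $Q_0$, and a suitable set of forms $W\subseteq D_0^*$. Condition~1 of that proposition---that $X$ satisfies (via $\psi$) all of $Q_1$--$Q_5$---is immediate: since $\CI_n\subseteq\OPDI_n$ and $Q_0\subseteq Q$, the required equalities of partial permutations are already contained in Lemma~\ref{genrelopdi} (alternatively they are checked directly, each recording how the $n$-cycle $g$ interacts by reindexing with the partial identities $e_i=\id_{\Omega_n\setminus\{i\}}$, with $Q_5$ merely saying that $g$ fixes the empty map). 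The substance therefore lies in the choice of $W$ and in verifying conditions~2 and~3.

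For the forms I take
$$
W=\{g^r e_{i_1}\cdots e_{i_k}\mid 0\leqslant r\leqslant n-1,\ \{i_1<\cdots<i_k\}\subsetneq\Omega_n\}\cup\{e_1\cdots e_n\}.
$$
To prove condition~2, let $w\in D_0^*$. Reading $Q_4$ as $e_jg=ge_{j+1}$ for $1\leqslant j\leqslant n-1$ and $e_ng=ge_1$, I repeatedly move an $e_j$ to the right past any $g$ immediately preceding it---each such move only reindexes $e_j$ cyclically---until all occurrences of $g$ have been gathered into a single power on the left, giving $w\approx g^m v$ with $v\in\{e_1,\ldots,e_n\}^*$; relation $Q_1$ then reduces $m$ modulo $n$ to some $0\leqslant r\leqslant n-1$. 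Using $Q_2$ (idempotency) and $Q_3$ (commutativity), the factor $v$ collapses to $e_{i_1}\cdots e_{i_k}=\id_{\Omega_n\setminus S}$, where $S=\{i_1<\cdots<i_k\}$ is the set of indices occurring in $v$. If $S\neq\Omega_n$ the resulting word already lies in $W$; if $S=\Omega_n$, repeated application of $Q_5$ yields $g^r e_1\cdots e_n\approx e_1\cdots e_n\in W$. Hence every word is $Q_0$-equivalent to an element of $W$.

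Finally I verify condition~3 by counting. As $g$ is an $n$-cycle, for a nonempty $Y\subseteq\Omega_n$ the restrictions $g^j|_Y$ with $0\leqslant j\leqslant n-1$ are pairwise distinct (if $g^j$ and $g^{j'}$ agreed on some $y\in Y$ then $n\mid j-j'$); together with the empty map this gives $|\CI_n|=n(2^n-1)+1$. On the other hand $W$ contains exactly $n$ words for each of the $2^n-1$ proper subsets $S$ of $\Omega_n$ (one per value of $r$), plus the single word $e_1\cdots e_n$, so $|W|=n(2^n-1)+1=|\CI_n|$ and in particular $|W|\leqslant|\CI_n|$. All three hypotheses of Proposition~\ref{ruskuc} hold, so $\CI_n$ is defined by $\langle D_0\mid Q_0\rangle$; the counts $n+1$ generators and $\frac12(n^2+3n+4)$ relations are read off from $Q_0$. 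The main obstacle is condition~2: keeping the cyclic reindexing under control and, above all, recognising that $Q_5$ is precisely the relation that collapses the $n$ otherwise-spurious forms $g^r e_1\cdots e_n$ to one---without it the count would be $n\cdot 2^n$ and the presentation would define a strictly larger monoid.
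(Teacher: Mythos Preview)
Your proof is correct and follows essentially the same route as the source the paper cites: the normal-form step is exactly Lemma~\ref{pre1} (plus the use of $Q_5$ when $k=n$), and the count $|\CI_n|=n(2^n-1)+1$ then lets Proposition~\ref{ruskuc} finish the job. One small wording slip in condition~2: you want to move $e_j$ to the right past a $g$ that immediately \emph{follows} it (via $e_jg\approx ge_{j+1\bmod n}$), not one that precedes it---the conclusion $w\approx g^r e_{i_1}\cdots e_{i_k}$ is unaffected.
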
 

To prove this result, the author used the property given by the following lemma, which we will also use here: 

\begin{lemma}[{\cite[Lemma 2.4]{Fernandes:2022sub}}]\label{pre1} 
Let $u\in D_0^*$. Then, there exist $0\leqslant m\leqslant n-1$, $1\leqslant i_1 < \cdots < i_k\leqslant n$ and $0\leqslant k\leqslant n$ such that 
the relation $u=g^m e_{i_1}\cdots e_{i_k}$ is a consequence of relations $Q_1$ to $Q_4$. 
\end{lemma}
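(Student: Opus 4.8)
The plan is to establish the normal form by induction on the length $|u|$ of the word $u\in D_0^*$, using only relations $Q_1$ to $Q_4$. For the base case, if $|u|\leqslant 1$ then $u$ is either the empty word (take $m=k=0$), a single letter $g$ (take $m=1$, $k=0$), or a single letter $e_i$ (take $m=0$, $k=1$, $i_1=i$), so the asserted relation holds trivially.

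For the inductive step, assume the result for all words of length at most $\ell$ and let $|u|=\ell+1$. Write $u=u'a$ with $a\in D_0$ and $|u'|=\ell$, and apply the induction hypothesis to $u'$ to obtain indices $0\leqslant m\leqslant n-1$ and $1\leqslant i_1<\cdots<i_k\leqslant n$ such that the relation $u'=g^m e_{i_1}\cdots e_{i_k}$ is a consequence of $Q_1$ to $Q_4$. It then remains to absorb the final letter $a$ into a word of the prescribed shape. If $a=e_j$, then $u=g^m e_{i_1}\cdots e_{i_k}e_j$ (as a consequence of $Q_1$ to $Q_4$), and using the commutativity relations $Q_3$ we may sort the idempotent block $e_{i_1}\cdots e_{i_k}e_j$ into non-decreasing index order; should $j$ coincide with some $i_\ell$, the sorted word then contains a factor $e_j^2$, which we collapse to $e_j$ via $Q_2$, leaving a strictly increasing product of distinct idempotents.

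If $a=g$, then $u=g^m e_{i_1}\cdots e_{i_k}g$, and I would push the trailing $g$ leftward through the idempotent block using $Q_4$, read in the form $e_jg=ge_{j+1}$ for $1\leqslant j\leqslant n-1$ and $e_ng=ge_1$. Each such move replaces an index $j$ by $\tau(j)$, where $\tau$ is the cyclic permutation $j\mapsto j+1$ (and $n\mapsto 1$) of $\{1,\ldots,n\}$, so passing $g$ through the entire block yields $u=g^{m+1}e_{\tau(i_1)}\cdots e_{\tau(i_k)}$. Finally I reduce the exponent modulo $n$ via $Q_1$, replacing $g^{m+1}$ by $g^{(m+1)\bmod n}$, and reorder the idempotents into increasing order via $Q_3$.

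The only point requiring care, and the main (though mild) obstacle, lies in the case $a=g$: one must observe that since $\tau$ is a bijection of $\{1,\ldots,n\}$, the new indices $\tau(i_1),\ldots,\tau(i_k)$ remain pairwise distinct (so no application of $Q_2$ is needed here), whence after reordering they again form a strictly increasing sequence of length $k$ with entries in $\{1,\ldots,n\}$, while $0\leqslant (m+1)\bmod n\leqslant n-1$ keeps the exponent in range. Granting this bookkeeping, in both cases we arrive at a word $g^{m'}e_{j_1}\cdots e_{j_k}$ with $0\leqslant m'\leqslant n-1$ and $1\leqslant j_1<\cdots<j_k\leqslant n$ whose equality with $u$ is a consequence of $Q_1$ to $Q_4$, completing the induction.
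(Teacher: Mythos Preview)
Your proof is correct. The paper does not supply its own proof of this lemma: it is quoted verbatim from \cite[Lemma~2.4]{Fernandes:2022sub} and used as a black box. Your induction on $|u|$, with the two-case split on the final letter and the reading of $Q_4$ as $e_jg=ge_{j+1}$ (indices taken cyclically), is exactly the standard argument one would expect for such a normal-form result, and all the bookkeeping (distinctness of the $\tau(i_\ell)$, reduction of the exponent modulo $n$ via $Q_1$, reordering via $Q_3$, collapsing repeats via $Q_2$) is handled correctly.
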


Observe that, it is easy to show that,  for $1\leqslant i \leqslant \lfloor\frac{n-1}{2}\rfloor$, the relations 
\begin{equation}\label{eigm}
e_i g^m = \left\{\begin{array}{ll}
g^m e_{i+m} & \mbox{if $0\leqslant m\leqslant n-i$}\\
 g^m e_{i+m-n} & \mbox{if $n-i+1\leqslant m\leqslant n-1$}
\end{array}\right. 
\quad\text{and}\quad 
g^m e_i = \left\{\begin{array}{ll}
e_{i-m} g^m & \mbox{if $0\leqslant m\leqslant i-1$}\\
e_{n+i-m} g^m & \mbox{if $i\leqslant m\leqslant n-1$} 
\end{array}\right. 
\end{equation}
are consequences of $Q_4$. 

Combining (\ref{eigm}) with Lemma \ref{pre1}, we immediately obtain the \textit{symmetric} result of the latter one:

\begin{lemma}\label{pre2} 
Let $u\in D_0^*$. Then, there exist $0\leqslant m\leqslant n-1$, $1\leqslant i_1 < \cdots < i_k\leqslant n$ and $0\leqslant k\leqslant n$ such that 
the relation $u=e_{i_1}\cdots e_{i_k}g^m$ is a consequence of relations $Q_1$ to $Q_4$. 
\end{lemma}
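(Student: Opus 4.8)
The plan is to reduce to Lemma \ref{pre1} and then transport the power of $g$ from the left of the idempotent block to its right. Given $u\in D_0^*$, Lemma \ref{pre1} provides $0\leqslant m\leqslant n-1$, $0\leqslant k\leqslant n$ and indices $1\leqslant i_1<\cdots<i_k\leqslant n$ such that the relation $u=g^me_{i_1}\cdots e_{i_k}$ is a consequence of relations $Q_1$ to $Q_4$. It therefore suffices to rewrite this right-hand side in the desired form, using only relations among $Q_1$ to $Q_4$.

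Next, I would push the factor $g^m$ rightwards through the idempotents one at a time, by means of the second family of relations in (\ref{eigm}), which are consequences of $Q_4$. Each application of $g^me_i=e_{i'}g^m$, where $i'\equiv i-m\pmod n$ with $i'\in\{1,\ldots,n\}$, moves $g^m$ past a single idempotent and replaces that idempotent by a cyclically shifted one. After $k$ such steps we obtain $u=e_{j_1}\cdots e_{j_k}g^m$, where $j_\ell\equiv i_\ell-m\pmod n$, again as a consequence of $Q_1$ to $Q_4$.

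Finally, I would sort the resulting indices into increasing order. Since the map $i\mapsto i-m\pmod n$, with values taken in $\{1,\ldots,n\}$, is a bijection of $\{1,\ldots,n\}$, the indices $j_1,\ldots,j_k$ are pairwise distinct; hence, applying $Q_3$ to commute the idempotent factors, we may rearrange $e_{j_1}\cdots e_{j_k}$ into $e_{i'_1}\cdots e_{i'_k}$ with $1\leqslant i'_1<\cdots<i'_k\leqslant n$. This gives $u=e_{i'_1}\cdots e_{i'_k}g^m$ as a consequence of $Q_1$ to $Q_4$, as required. There is no genuine obstacle here: every step is forced, and the only point deserving a word of care is the observation that the cyclic shift preserves distinctness of the indices, so that $Q_3$ alone suffices to restore the strictly increasing order.
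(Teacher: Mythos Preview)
Your proposal is correct and takes essentially the same approach as the paper, which simply states that combining (\ref{eigm}) with Lemma~\ref{pre1} immediately yields the symmetric result. You have merely made explicit what the paper leaves implicit: that (\ref{eigm}) lets $g^m$ pass through each $e_i$ at the cost of a cyclic index shift, and that $Q_3$ then restores the strictly increasing order since the shift is a bijection on $\{1,\ldots,n\}$.
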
 

From now on, we denote the congruence $\rho_Q$ of $D^*$ again by $\approx$. 

\smallskip 
 
By Lemma \ref{r11r12}, we can conclude:

\begin{lemma}\label{xixj}
For all $1\leqslant i,j \leqslant \lfloor\frac{n-1}{2}\rfloor$, $x_ix_j\approx e_2\cdots e_n$.
\end{lemma}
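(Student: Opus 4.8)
The plan is to recognize that this statement is precisely the restriction to the $x$-generators of Lemma \ref{r11r12}(1), and that the computation there proving $x_ix_j = e_2\cdots e_n$ used only relations that reappear, under different names, among the relations $Q$. Indeed, that earlier argument invoked only $R_1$, $R_4$, the first half of $R_7$, and the subrelations $x_ie_{n-i+1}=e_{i+1}x_i=e_2\cdots e_n$ of $R_{10}$; these correspond respectively to $Q_2$, $Q_3$, $Q_7$ and $Q_9$. So I would simply replay that computation, now reading $\approx$ as $\rho_Q$, rather than developing anything new.

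Concretely, the first step is to record the numerical fact $j+1\neq n-i+1$, equivalently $i+j\neq n$. This holds because $1\leqslant i,j\leqslant\lfloor\frac{n-1}{2}\rfloor$ forces $i+j\leqslant 2\lfloor\frac{n-1}{2}\rfloor\leqslant n-1<n$. With this in hand, $j+1$ is an admissible index for the first family in $Q_7$ (note also $2\leqslant j+1\leqslant n$), so I may insert an idempotent on the right of $x_i$, obtaining $x_i\approx x_ie_{j+1}$.

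The second step rewrites the product. Applying $Q_9$ in the form $e_{j+1}x_j\approx e_2\cdots e_n$ gives $x_ix_j\approx x_ie_{j+1}x_j\approx x_ie_2\cdots e_n$. To finish, I would use $Q_3$ to commute the factor $e_{n-i+1}$ to the front of the block $e_2\cdots e_n$ (here $2\leqslant n-i+1\leqslant n$, so this factor really occurs in the block), then apply $Q_9$ again in the form $x_ie_{n-i+1}\approx e_2\cdots e_n$, and finally collapse the resulting repeated idempotents using $Q_2$ and $Q_3$, yielding $x_ie_2\cdots e_n\approx e_2\cdots e_n$. Chaining these congruences gives $x_ix_j\approx e_2\cdots e_n$.

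I expect no conceptual difficulty here: the only thing demanding attention is the bookkeeping of index ranges guaranteeing that each relation is legitimately applicable (in particular that $n-i+1$ and $i+1$ lie in the ranges prescribed for $Q_7$ and $Q_9$) together with the careful reordering of the commuting idempotents $e_k$. Thus the main obstacle is merely confirming that the relations $Q_2$, $Q_3$, $Q_7$, $Q_9$ faithfully mirror the relations of $R$ used in Lemma \ref{r11r12}, after which the conclusion is immediate.
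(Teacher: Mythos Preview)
Your proposal is correct and follows essentially the same approach as the paper: the paper simply invokes Lemma \ref{r11r12}, which tacitly relies on the observation you make explicit, namely that the computation there for $x_ix_j$ uses only $R_1$, $R_4$, the first half of $R_7$, and the $x_i$-portion of $R_{10}$, and these are reproduced verbatim in $Q$ as $Q_2$, $Q_3$, $Q_7$, $Q_9$. Your step-by-step replay of that chain, with the needed index checks, is exactly what the paper leaves to the reader.
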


Next, we prove a series of lemmas.  

\begin{lemma}\label{xiuxj} 
Let $u\in D_0^*$ and let $1\leqslant i,j \leqslant \lfloor\frac{n-1}{2}\rfloor$. 
Then, there exists $v\in D_0^*\cup x_iD_0^*\cup D_0^*x_j$ such that $x_iux_j\approx v$. 
\end{lemma}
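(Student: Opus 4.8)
The plan is to first absorb the word $u$ into the left factor $x_i$ and then treat the residual product against $x_j$ on the right. By Lemma~\ref{pre2} I may assume $u=e_{i_1}\cdots e_{i_k}g^m$ with $1\leqslant i_1<\cdots<i_k\leqslant n$ and $0\leqslant m\leqslant n-1$, so that $x_iux_j\approx x_ie_{i_1}\cdots e_{i_k}g^mx_j$. Using the commutativity $Q_3$ of the idempotents, I would split on the set $\{i_1,\dots,i_k\}$: if it contains $1$, pull $e_1$ next to $x_i$ and apply $Q_6$; if it contains $n-i+1$, pull $e_{n-i+1}$ next to $x_i$ and apply $Q_9$; in either case $x_ie_{i_1}\cdots e_{i_k}$ collapses to a word of $D_0^*$, whence $x_iux_j\approx wg^mx_j\in D_0^*x_j$. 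Otherwise every index $i_\ell$ lies in $\{2,\dots,n\}\setminus\{n-i+1\}$, so repeated use of $Q_7$ gives $x_ie_{i_1}\cdots e_{i_k}\approx x_i$, reducing the problem to the analysis of $x_ig^mx_j$.

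The core of the argument is this residual product $x_ig^mx_j$. The idea is to manufacture, just to the left of $x_j$, an idempotent that, after sliding $g^m$ across it, becomes one of the two letters annihilating $x_i$, namely $e_1$ (via $Q_6$) or $e_{n-i+1}$ (via $Q_9$). Concretely, for any $l\in\{2,\dots,n\}\setminus\{j+1\}$ relation $Q_8$ gives $x_j\approx e_lx_j$, and then $(\ref{eigm})$ yields $g^me_l\approx e_{l'}g^m$ with $l'\equiv l-m\pmod n$; thus $x_ig^mx_j\approx x_ie_{l'}g^mx_j$. It therefore suffices to choose such an $l$ with $l'=1$ or $l'=n-i+1$, i.e.\ $l\equiv m+1$ or $l\equiv m-i+1\pmod n$: in the first case $Q_6$ gives $x_ie_1\in D_0^*$ and in the second $Q_9$ gives $x_ie_{n-i+1}\in D_0^*$, so in either case $x_ig^mx_j\approx w'g^mx_j\in D_0^*x_j$.

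What remains is to verify the availability of such an $l$, and here the edge cases are where the difficulty concentrates. If $m=0$ the product is simply $x_ix_j$, handled directly by Lemma~\ref{xixj}. If $1\leqslant m\leqslant n-1$ and $m\neq j$, the choice $l=m+1\in\{2,\dots,n\}\setminus\{j+1\}$ realises $l'=1$. If $m=j$ and $i\neq j$, the residue $l\equiv j-i+1\pmod n$ realises $l'=n-i+1$ and one checks it avoids both $1$ and $j+1$. The only genuinely exceptional configuration, and the main obstacle, is $i=j=m$, where both prescribed residues fall on the forbidden values $1$ and $j+1$, so neither $Q_6$ nor $Q_9$ can be triggered by the insertion trick; here I would instead appeal directly to $Q_{10}$, deducing from $(x_ig^i)^2\approx e_2\cdots e_ie_{i+2}\cdots e_n$ and $g^n=1$ that $x_ig^ix_i\approx e_2\cdots e_ie_{i+2}\cdots e_n\,g^{n-i}\in D_0^*$. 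In all cases $x_iux_j$ is congruent to a word of $D_0^*\cup D_0^*x_j$, which is contained in the required set $D_0^*\cup x_iD_0^*\cup D_0^*x_j$.
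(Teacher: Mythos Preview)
Your argument is correct and follows essentially the same strategy as the paper's proof: reduce $u$ to a normal form, absorb the idempotent block into one of the $x$-letters via $Q_6$--$Q_9$, then treat the residual $x_ig^mx_j$ by inserting a suitable idempotent (using $Q_7$/$Q_8$) and sliding it through $g^m$ to trigger $Q_6$ or $Q_9$, with the exceptional case $m=i=j$ dispatched by $Q_{10}$. The only organizational difference is a left--right flip: the paper writes $u\approx g^me_{i_1}\cdots e_{i_k}$ via Lemma~\ref{pre1} and absorbs the idempotents into $x_j$ (landing in $x_iD_0^*$ for those cases), whereas you use Lemma~\ref{pre2} and absorb into $x_i$; consequently you never actually need the $x_iD_0^*$ alternative and land throughout in $D_0^*\cup D_0^*x_j$, which is a mildly sharper conclusion.
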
 
\begin{proof}
By Lemma \ref{pre1}, there exist $0\leqslant m\leqslant n-1$, $1\leqslant i_1 < \cdots < i_k\leqslant n$ and $0\leqslant k\leqslant n$ such that 
$u\approx g^m e_{i_1}\cdots e_{i_k}$. 

If $i_1=1$ (with $k>0$) then 
$$
x_iux_j 
\approx x_ig^m e_{i_2}\cdots e_{i_k}e_1x_j
\approx  x_ig^m e_{i_2}\cdots e_{i_k}g^{n-2j}e_1\cdots e_{n-j}e_{n-j+2}\cdots e_n \in x_iD_0^*, 
$$
by $Q_3$ and $Q_6$. 

If $j+1\in\{i_1,\ldots,i_k\}$ (with $k>0$) then, being $1\leqslant\ell\leqslant k$ such that $j+1=i_\ell$, we have 
$$
x_iux_j 
\approx x_ig^m e_{i_1}\cdots e_{i_\ell-1}e_{i_\ell+1}\cdots e_{i_k}e_{j+1}x_j 
\approx x_ig^m e_{i_1}\cdots e_{i_\ell-1}e_{i_\ell+1}\cdots e_{i_k} e_2\cdots e_n \in x_iD_0^*,  
$$
by $Q_3$ and $Q_9$. 

Now, suppose that either $k=0$ or $i_1>1$ and $j+1\not\in\{i_1,\ldots,i_k\}$ (with $k>0$).  
Then, by $Q_8$, $e_{i_1}\cdots e_{i_k}x_j\approx x_j$ and so $x_iux_j\approx x_ig^mx_j$. 

If $m=0$ then, by Lemma \ref{xixj}, $x_iux_j\approx x_ix_j\approx e_2\cdots e_n \in D_0^*$. So, suppose that $m>0$. 

If $m\neq i$ then $n-i+1\neq n-m+1\geqslant 2$ and so 
$$
x_iux_j\approx x_ie_{n-m+1}g^mx_j \approx x_ig^m e_1x_j \approx x_ig^mg^{n-2j}e_1\cdots e_{n-j}e_{n-j+2}\cdots e_n \in x_iD_0^*, 
$$
by $Q_7$, (\ref{eigm}) and $Q_6$. 

If $m\neq j$ then $j+1\neq m+1\geqslant 2$ and so 
$$
x_iux_j\approx x_ig^me_{m+1}x_j \approx x_ie_1 g^mx_j \approx g^{n-2i}e_1\cdots e_{n-i}e_{n-i+2}\cdots e_n g^mx_j \in D_0^*x_j, 
$$
by $Q_8$, (\ref{eigm}) and $Q_6$. 

Finally, if $m=i=j$ then 
$$
x_iux_j\approx x_ig^ix_i \approx x_ig^ix_i g^i g^{n-i} =(x_i g^i)^2 g^{n-i} \approx e_2\cdots e_ie_{i+2}\cdots e_n g^{n-i}\in D_0^*, 
$$
by $Q_1$ and $Q_{10}$, as required. 
\end{proof}

\begin{lemma}\label{ww} 
Let $w\in D^*$.  
Then, there exists $w'\in D_0^*\cup D_0^*x_1D_0^*\cup\cdots\cup D_0^*x_{\lfloor\frac{n-1}{2}\rfloor}D_0^*$ such that $w\approx w'$. 
\end{lemma}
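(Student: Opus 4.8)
The plan is to proceed by induction on the number $m\geqslant0$ of occurrences in $w$ of letters from $D\setminus D_0=\{x_1,\ldots,x_{\lfloor\frac{n-1}{2}\rfloor}\}$. First I would write $w$ in the block form
$$
w=u_0x_{i_1}u_1x_{i_2}u_2\cdots x_{i_m}u_m,
$$
where $u_0,u_1,\ldots,u_m\in D_0^*$ and $x_{i_1},\ldots,x_{i_m}$ are the $x$-letters occurring in $w$, read from left to right. Such a decomposition always exists once we agree to group each maximal run of letters from $D_0$ into a single factor $u_k$.

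For the base of the induction, if $m=0$ then $w\in D_0^*$, and if $m=1$ then $w=u_0x_{i_1}u_1\in D_0^*x_{i_1}D_0^*$; in either case $w$ already lies in the target set $D_0^*\cup D_0^*x_1D_0^*\cup\cdots\cup D_0^*x_{\lfloor\frac{n-1}{2}\rfloor}D_0^*$, so I may take $w'=w$.

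For the inductive step, assume $m\geqslant2$ and that the statement holds for all words with fewer than $m$ occurrences of $x$-letters. Since $u_1\in D_0^*$, Lemma \ref{xiuxj} applies to the leftmost factor $x_{i_1}u_1x_{i_2}$ and yields $v\in D_0^*\cup x_{i_1}D_0^*\cup D_0^*x_{i_2}$ with $x_{i_1}u_1x_{i_2}\approx v$. Replacing this factor by $v$ produces a word $\tilde w\approx w$, and I would then distinguish the three cases: if $v\in D_0^*$, both $x_{i_1}$ and $x_{i_2}$ are absorbed into a single $D_0$-block, so $\tilde w$ has $m-2$ occurrences of $x$-letters; if $v\in x_{i_1}D_0^*$ or $v\in D_0^*x_{i_2}$, exactly one of the two survives, so $\tilde w$ has $m-1$ occurrences. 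In all three cases $\tilde w$ is again in block form (the surviving $x$-letter simply merges with the adjacent $D_0$-factors) and has strictly fewer $x$-letters than $w$; hence the induction hypothesis supplies $w'$ in the required set with $\tilde w\approx w'$, whence $w\approx w'$.

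The genuine work is carried out by Lemma \ref{xiuxj}, so the only point requiring care here is the choice of induction parameter: I would induct on the number of $x$-letters rather than on the word length $|w|$, since the replacement $v$ produced by Lemma \ref{xiuxj} need not be shorter than the factor $x_{i_1}u_1x_{i_2}$ it replaces. With this parameter the count strictly decreases at each step regardless of which of the three outputs occurs, and the procedure terminates precisely when at most one $x$-letter remains.
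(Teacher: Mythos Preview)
Your proof is correct and follows essentially the same approach as the paper: both argue by induction on the number of occurrences of the letters $x_1,\ldots,x_{\lfloor\frac{n-1}{2}\rfloor}$ and invoke Lemma~\ref{xiuxj} to reduce that count. The only cosmetic difference is that the paper applies the induction hypothesis to the prefix $u_0x_{i_1}u_1\cdots x_{i_{k-1}}u_{k-1}$ and then uses Lemma~\ref{xiuxj} to merge the resulting single $x$-letter with the last one $x_{i_k}$, whereas you collapse the leftmost pair $x_{i_1}u_1x_{i_2}$ first and then appeal to (strong) induction; both orderings work for the same reason.
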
 
\begin{proof}
We proceed by induction on the number of occurrences of the letters $x_1, \ldots, x_{\lfloor\frac{n-1}{2}\rfloor}$ in a word $w \in D^*$. 

If $w \in D^*$ has no occurrences of the letters $x_1, \ldots, x_{\lfloor\frac{n-1}{2}\rfloor}$ then $w\in D_0^*$ and so there is nothing to prove. 

Hence, for $k\geqslant1$, suppose that the lemma is valid for all words in $D^*$ with $k-1$ occurrences of the letters $x_1, \ldots, x_{\lfloor\frac{n-1}{2}\rfloor}$. 

Let $w$ be a word of $D^*$ with $k$ occurrences of the letters $x_1, \ldots, x_{\lfloor\frac{n-1}{2}\rfloor}$. 
Then $w=u_0x_{i_1}u_1\cdots x_{i_{k-1}}u_{i_{k-1}}x_{i_k}u_k$, 
for some $u_0,u_1,\ldots,u_k \in D_0^*$ and $1\leqslant i_1,\ldots,i_k\leqslant \lfloor\frac{n-1}{2}\rfloor$. 
Hence, by induction hypothesis, there exists $u'\in D_0^*\cup D_0^*x_1D_0^*\cup\cdots\cup D_0^*x_{\lfloor\frac{n-1}{2}\rfloor}D_0^*$ 
such that $u_0x_{i_1}u_1\cdots x_{i_{k-1}}u_{i_{k-1}}\approx u'$ and so $w\approx u'x_{i_k}u_k$. 

If $u'\in D_0^*$ then the proof is finished. So, suppose that $u'=u'_0x_iu'_1$, for some $u'_0,u'_1\in D_0^*$ and some $1\leqslant i\leqslant \lfloor\frac{n-1}{2}\rfloor$. 
Thus, by Lemma \ref{xiuxj}, there exists $v\in D_0^*\cup x_iD_0^*\cup D_0^*x_{i_k}$ such that $x_iu'_1x_{i_k}\approx v$, whence 
$w\approx u'_0x_iu'_1x_{i_k}u_k \approx u'_0vu_k \in D_0^*\cup D_0^*x_iD_0^*\cup D_0^*x_{i_k}D_0^*$, as required. 
\end{proof}

\begin{lemma}\label{maindpdi} 
Let $w\in D^*$.  
Then, there exists $u\in D_0^*$ such that $w\approx u$ 
or there exist $1\leqslant i \leqslant \lfloor\frac{n-1}{2}\rfloor$ and $0\leqslant r,s\leqslant n-1$ such that $w\approx g^rx_ig^s$. 
\end{lemma}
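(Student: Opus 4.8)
The plan is to combine Lemma~\ref{ww} with the normal-form results Lemma~\ref{pre1} and Lemma~\ref{pre2} for $D_0^*$, and then to exploit the way a single letter $x_i$ absorbs idempotents on either side. First I would apply Lemma~\ref{ww} to $w$: either $w\approx w'$ with $w'\in D_0^*$, in which case the first alternative holds with $u=w'$, or $w\approx u_0x_iu_1$ for some $u_0,u_1\in D_0^*$ and some $1\leqslant i\leqslant\lfloor\frac{n-1}{2}\rfloor$. In the latter case, applying Lemma~\ref{pre2} to $u_0$ and Lemma~\ref{pre1} to $u_1$, I would write $u_0\approx \left(\prod_{t\in S}e_t\right)g^m$ and $u_1\approx g^{m'}\left(\prod_{t\in T}e_t\right)$ for some $S,T\subseteq\{1,\dots,n\}$ and $0\leqslant m,m'\leqslant n-1$, so that $w\approx \left(\prod_{t\in S}e_t\right)g^m x_i g^{m'}\left(\prod_{t\in T}e_t\right)$. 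The goal is then to absorb the two blocks of idempotents into $x_i$ using $Q_6$--$Q_9$.

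For the left block, I would first push $g^m$ to the left across the idempotents by $(\ref{eigm})$, obtaining $\left(\prod_{t\in S}e_t\right)g^m\approx g^m\left(\prod_{t\in S'}e_t\right)$ for a suitable $S'$, so that the idempotents become adjacent to $x_i$. Now I would split into three cases according to $S'$: if $1\in S'$, then $Q_3$ and $Q_6$ rewrite $\left(\prod_{t\in S'}e_t\right)x_i$ as a word of $D_0^*$; if $1\notin S'$ but $i+1\in S'$, then $Q_3$ and $Q_9$ do the same via $e_{i+1}x_i=e_2\cdots e_n$; in either of these two cases the whole word lies in $D_0^*$ and the first alternative holds. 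Otherwise $1,i+1\notin S'$, so every factor of $\prod_{t\in S'}e_t$ is absorbed by $Q_8$ and the left block disappears, leaving $w\approx g^m x_i g^{m'}\left(\prod_{t\in T}e_t\right)$.

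The right block is treated symmetrically. Pushing $g^{m'}$ to the right by $(\ref{eigm})$ gives $g^{m'}\left(\prod_{t\in T}e_t\right)\approx \left(\prod_{t\in T'}e_t\right)g^{m'}$, making the idempotents adjacent to $x_i$ on the right. If $1\in T'$ (using $Q_6$) or $n-i+1\in T'$ (using $Q_9$ via $x_ie_{n-i+1}=e_2\cdots e_n$), the word collapses into $D_0^*$ and the first alternative holds; otherwise all factors are absorbed by $Q_7$ and we are left with $w\approx g^m x_i g^{m'}$. Finally, reducing the exponents modulo $n$ by $Q_1$ yields $w\approx g^r x_i g^s$ with $0\leqslant r,s\leqslant n-1$, which is the second alternative. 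The argument is essentially bookkeeping once the key observation is in place; the only point requiring care is the consistent use of $(\ref{eigm})$ to make the idempotents adjacent to $x_i$ before applying the absorption relations, together with checking that the ``exceptional'' idempotents $e_1,e_{i+1}$ (on the left) and $e_1,e_{n-i+1}$ (on the right) are exactly the ones that force the word into $D_0^*$.
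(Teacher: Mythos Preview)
Your proof is correct and follows essentially the same route as the paper's: reduce via Lemma~\ref{ww} to a word $u_0x_iu_1$, normalise $u_0$ and $u_1$ so the idempotents sit adjacent to $x_i$, and then use $Q_6$--$Q_9$ to either collapse into $D_0^*$ or strip all idempotents. The only difference is cosmetic: the paper applies Lemma~\ref{pre1} to $u_0$ and Lemma~\ref{pre2} to $u_1$ directly, whereas you apply them the other way round and then invoke $(\ref{eigm})$ to commute the $g$-powers past the idempotents --- this extra step is harmless (and the final reduction of exponents mod~$n$ is unnecessary, since $m,m'$ are already in $\{0,\dots,n-1\}$).
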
 
\begin{proof}
First, by Lemma \ref{ww}, take $w'\in D_0^*\cup D_0^*x_1D_0^*\cup\cdots\cup D_0^*x_{\lfloor\frac{n-1}{2}\rfloor}D_0^*$ such that $w\approx w'$. 
If $w'\in D_0^*$ then the proof is finished. So, suppose that $w'=ux_iv$, for some $u,v\in D_0^*$ and some $1\leqslant i\leqslant \lfloor\frac{n-1}{2}\rfloor$. 
Then, by Lemmas \ref{pre1} and \ref{pre2}, $u\approx g^r e_{i_1}\cdots e_{i_k}$ and $v\approx e_{j_1}\cdots e_{j_\ell}g^s$, 
for some $0\leqslant r,s\leqslant n-1$, $1\leqslant i_1 < \cdots < i_k\leqslant n$, $1\leqslant j_1 < \cdots < j_\ell\leqslant n$ and $0\leqslant k,\ell\leqslant n$.  
Hence $w\approx g^r e_{i_1}\cdots e_{i_k} x_i e_{j_1}\cdots e_{j_\ell}g^s$. 

If $i_1=1$ (with $k>0$) then, by $Q_3$ and $Q_6$,  
$$
w\approx g^r e_{i_2}\cdots e_{i_k} e_1x_i e_{j_1}\cdots e_{j_\ell}g^s 
\approx g^r e_{i_2}\cdots e_{i_k}   g^{n-2i}e_1\cdots e_{n-i}e_{n-i+2}\cdots e_n     e_{j_1}\cdots e_{j_\ell}g^s \in D_0^*. 
$$

If $i+1\in\{i_1,\ldots,i_k\}$ (with $k>0$) then, being $1\leqslant p\leqslant k$ such that $i+1=i_p$, we have 
$$
w\approx g^r e_{i_1}\cdots e_{i_p-1}e_{i_p+1}\cdots e_{i_k} e_{i+1} x_i  e_{j_1}\cdots e_{j_\ell}g^s 
\approx g^r e_{i_1}\cdots e_{i_p-1}e_{i_p+1}\cdots e_{i_k} e_2\cdots e_n e_{j_1}\cdots e_{j_\ell}g^s  \in D_0^*,  
$$
by $Q_3$ and $Q_9$. 

If $j_1=1$ (with $\ell>0$) then, by $Q_6$,  
$$
w\approx g^r e_{i_1}\cdots e_{i_k} x_i e_1e_{j_2}\cdots e_{j_\ell}g^s 
\approx g^r e_{i_1}\cdots e_{i_k}   g^{n-2i}e_1\cdots e_{n-i}e_{n-i+2}\cdots e_n   e_{j_2}\cdots e_{j_\ell}g^s \in D_0^*. 
$$

If $n-i+1\in\{j_1,\ldots,j_\ell\}$ (with $\ell>0$) then, being $1\leqslant q\leqslant \ell$ such that $n-i+1=j_q$, we have 
$$
w\approx g^r e_{i_1}\cdots  e_{i_k}  x_ie_{n-i+1}  e_{j_1}\cdots e_{j_q-1}e_{j_q+1}\cdots e_{j_\ell}g^s 
\approx g^r e_{i_1}\cdots  e_{i_k}  e_2\cdots e_n  e_{j_1}\cdots e_{j_q-1}e_{j_q+1}\cdots e_{j_\ell}g^s   \in D_0^*,  
$$
by $Q_3$ and $Q_9$. 

Finally, suppose that none of the four previous cases occurs. 
Then, by $Q_7$ and $Q_8$, $e_{i_1}\cdots e_{i_k}x_i e_{j_1}\cdots e_{j_\ell} \approx x_i$ and so $w\approx g^rx_ig^s$, as required.  
\end{proof}

For $1\leqslant i \leqslant \lfloor\frac{n-1}{2}\rfloor$ and $0\leqslant r,s\leqslant n-1$, let us consider the transformation $g^rx_ig^s$. 
It is a routine matter to check that 
$$
g^rx_ig^s=
\left\{\begin{array}{ll}
\begin{pmatrix}
              1 & 1+i \\
              1+s & a 
\end{pmatrix} & \text{if $r=0$} \\ 
\begin{pmatrix}
              n-r+1 & n-r+i+1 \\
              1+s & a 
\end{pmatrix} & \text{if $r>0$ and $1\leqslant i\leqslant r-1$} \\ 
\begin{pmatrix}
              i-r+1 & n-r+1 \\
              a & 1+s 
\end{pmatrix} & \text{if $r>0$ and $r\leqslant i\leqslant\lfloor\frac{n-1}{2}\rfloor$},    
\end{array}\right. 
$$
with 
$$
a = \left\{\begin{array}{ll}
            s-i+1 & \mbox{if $1\leqslant i\leqslant s$} \\
            n+s-i+1 & \mbox{if $s+1\leqslant i\leqslant\lfloor\frac{n-1}{2}\rfloor$}. 
           \end{array}\right.
$$
Hence, it is easy to show that 
\begin{equation}\label{rs0}
\mbox{$x_i=g^rx_ig^s$ if and only if $r=s=0$.} 
\end{equation}

Now, recall that $\{g,e_1\}$ generates $\CI_n$ and 
that $\{g,e_1,x_1,x_2,\ldots,x_{\lfloor\frac{n-1}{2}\rfloor}\}$ is a minimum size generating set of $\OPDI_n$. 
Therefore, noticing also that $g^n=1$, we have 
\begin{equation}\label{notinci}
\mbox{$g^rx_ig^s\not\in\CI_n$, for all $1\leqslant i \leqslant \lfloor\frac{n-1}{2}\rfloor$ and $r,s\geqslant 0$}, 
\end{equation}
and 
\begin{equation}\label{xjgrxigs}
\mbox{$x_j=g^rx_ig^s$, with $1\leqslant i,j \leqslant \lfloor\frac{n-1}{2}\rfloor$ and $r,s\geqslant 0$, implies $i=j$}.  
\end{equation}

We are now in a position to prove our first objective of this section.

\begin{theorem}\label{firstpresopdi} 
The monoid $\OPDI_n$ is defined by the presentation $\langle D\mid Q\rangle$ on $n+\lfloor\frac{n-1}{2}\rfloor+1$ generators 
and $\frac{1}{2}(3n^2+(1-(-1)^n)n+3-\frac{1+(-1)^n}{2})$ relations. 
\end{theorem}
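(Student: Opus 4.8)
The plan is to verify the hypotheses of Proposition \ref{provingpresentation} for the homomorphism $\psi\colon D^*\longrightarrow\OPDI_n$. Condition~1 is precisely Lemma \ref{genrelopdi}, so the whole work lies in condition~2: for any $w_1,w_2\in D^*$ with $w_1\psi=w_2\psi$, the relation $w_1=w_2$ has to be a consequence of $Q$, that is, $w_1\approx w_2$.

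First I would fix a set of forms $W_0\subseteq D_0^*$ for the presentation $\langle D_0\mid Q_0\rangle$; by Proposition \ref{firstpresci} the restriction $\psi|_{W_0}$ is a bijection of $W_0$ onto $\CI_n$. Put
$$
W=W_0\cup\{g^rx_ig^s\mid 1\leqslant i\leqslant\lfloor\tfrac{n-1}{2}\rfloor,\ 0\leqslant r,s\leqslant n-1\}.
$$
By Lemma \ref{maindpdi}, every word of $D^*$ is $\approx$-congruent to some element of $W$. Hence, to obtain condition~2 it suffices to show that $\psi|_W$ is injective: given $w_1\psi=w_2\psi$, reduce $w_1\approx\bar w_1$ and $w_2\approx\bar w_2$ with $\bar w_1,\bar w_2\in W$, so that $\bar w_1\psi=\bar w_2\psi$ forces $\bar w_1=\bar w_2$, and transitivity gives $w_1\approx\bar w_1=\bar w_2\approx w_2$.

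The injectivity of $\psi|_W$ breaks into three parts. The images of $W_0$ sit in $\CI_n$ and are pairwise distinct by the choice of $W_0$. Each image $(g^rx_ig^s)\psi$ lies outside $\CI_n$ by (\ref{notinci}), so the two blocks of $W$ have disjoint images and no cross-collision can occur. The remaining, and genuinely delicate, point is that the parametrization $(i,r,s)\longmapsto g^rx_ig^s$ is injective on the indicated ranges. To see this I would read the domain and the image of $g^rx_ig^s$ off the three-case table displayed before the statement and check that the pair (domain,~image) determines $i$, $r$ and $s$ uniquely; the assertions (\ref{rs0}) and (\ref{xjgrxigs}) are the easy instances of this computation. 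I expect this to be the main obstacle: one must confirm that the three regimes $r=0$; $r>0$ with $1\leqslant i\leqslant r-1$; and $r>0$ with $r\leqslant i\leqslant\lfloor\frac{n-1}{2}\rfloor$ never yield the same transformation from distinct triples, and the bookkeeping with the cyclic index shifts of (\ref{eigm}) is where the care is required. Once $\psi|_W$ is known to be injective, condition~2 holds and Proposition \ref{provingpresentation} delivers the presentation; the stated numbers of generators and relations are then immediate from the definitions of $D$ and $Q$.
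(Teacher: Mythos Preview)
Your plan is correct and matches the paper's strategy exactly: verify condition~1 of Proposition~\ref{provingpresentation} via Lemma~\ref{genrelopdi}, and for condition~2 use Lemma~\ref{maindpdi} to reduce each word either to $D_0^*$ (handled by Proposition~\ref{firstpresci}, equivalently by your set of forms $W_0$) or to some $g^rx_ig^s$, with (\ref{notinci}) separating the two cases.

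The one place where your proposal diverges is the injectivity of $(i,r,s)\longmapsto g^rx_ig^s$, which you flag as the ``main obstacle'' and propose to attack by a direct case analysis on the domain/image table. The paper sidesteps this bookkeeping entirely: from $g^{r_1}x_{i_1}g^{s_1}=g^{r_2}x_{i_2}g^{s_2}$ (as transformations) one multiplies on the left by $g^{n-r_1}$ and on the right by $g^{n-s_1}$, using $g^n=1$, to obtain $x_{i_1}=g^{r'}x_{i_2}g^{s'}$ with $r',s'\geqslant0$. Then (\ref{xjgrxigs}) yields $i_1=i_2$, after which (\ref{rs0}) forces $r'=s'=0$, and the range constraints $0\leqslant r_k,s_k\leqslant n-1$ give $r_1=r_2$, $s_1=s_2$. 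Thus the ``easy instances'' (\ref{rs0}) and (\ref{xjgrxigs}) you mention are already the whole argument; the three-regime analysis you anticipated is unnecessary.
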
 
\begin{proof}
Given Lemma \ref{genrelopdi}, by Proposition \ref{provingpresentation}, 
it remains to prove that $w_1\approx w_2$ for all words $w_1,w_2\in D^*$ such that $w_1\psi=w_2\psi$. 
So, let $w_1,w_2\in D^*$ be such that $w_1\psi=w_2\psi$. 

By Lemma \ref{maindpdi}, for $k\in\{1,2\}$, there exists $u_k\in D_0^*$ such that $w_k\approx u_k$ (and then $w_k\psi= u_k\psi\in\CI_n$)
or there exist $1\leqslant i_k \leqslant \lfloor\frac{n-1}{2}\rfloor$ and $0\leqslant r_k,s_k\leqslant n-1$ such that $w_k\approx g^{r_k}x_{i_k}g^{s_k}$ 
(and then,  by (\ref{notinci}), $w_k\psi= (g^{r_k}x_{i_k}g^{s_k})\psi=g^{r_k}x_{i_k}g^{s_k}\not\in\CI_n$). 
Since $w_1\psi=w_2\psi$, we can only have:  ({\sc case 1}) $w_1\approx u_1$ and  $w_2\approx u_2$, for some $u_1,u_2\in D_0^*$; 
or ({\sc case 2}) $w_1\approx g^{r_1}x_{i_1}g^{s_1}$ and $w_2\approx g^{r_2}x_{i_2}g^{s_2}$, 
for some $1\leqslant i_1,i_2 \leqslant \lfloor\frac{n-1}{2}\rfloor$ and $0\leqslant r_1,s_1,r_2,s_2\leqslant n-1$. 

If {\sc case 1} occurs, then $u_1\psi=w_1\psi=w_2\psi=u_2\psi$ and so $u_1\rho_{Q_0}u_2$, 
since $\langle D_0 \mid Q_0\rangle$ is a presentation of $\CI_n$, by Proposition \ref{firstpresci},  
which implies that $u_1\approx u_2$ and thus $w_1\approx w_2$. 

Now, suppose we have {\sc case 2}. 
Then $g^{r_1}x_{i_1}g^{s_1}=w_1\psi=w_2\psi=g^{r_2}x_{i_2}g^{s_2}$ and so 
$x_{i_1}=g^{r_2-r_1+r_3}x_{i_2}g^{s_2-s_1+s_3}$, 
where 
$$
r_3=\left\{\begin{array}{ll} 
0 & \mbox{if $r_1\leqslant r_2$}\\
n & \mbox{if $r_1> r_2$}
\end{array}\right. 
\quad \text{and} \quad 
s_3=\left\{\begin{array}{ll} 
0 & \mbox{if $s_1\leqslant s_2$}\\
n & \mbox{if $s_1> s_2$}, 
\end{array}\right. 
$$
whence $i_1=i_2$, by (\ref{xjgrxigs}). 
Thus, it follows by (\ref{rs0}) that $r_2-r_1+r_3=0=s_2-s_1+s_3$. 
If $r_3=n$ then $n=r_1-r_2$, which is a contradiction, since $0\leqslant r_1,r_2\leqslant n-1$. 
Therefore $r_3=0$ and, analogously, $s_3=0$, whence $r_1=r_2$ and $s_1=s_2$. 
Thus $w_1\approx g^{r_1}x_{i_1}g^{s_1}=g^{r_2}x_{i_2}g^{s_2}\approx w_2$, as required. 
\end{proof}

\medskip 

Next, by using Tietze transformations and applying Proposition \ref{tietze}, we deduce from the previous presentation for $\OPDI_n$ 
a new one on the minimum size set of generators $\{g,e_1,x_1,x_2,\ldots,x_{\lfloor\frac{n-1}{2}\rfloor}\}$ of $\OPDI_n$.  

\smallskip 

By noticing that $e_i=g^{n-i+1}e_1g^{i-1}$ for $2\leqslant i\leqslant n$ (as transformations), 
we proceed as follows: first, by applying T1, we add the relations $e_i=g^{n-i+1}e_1g^{i-1}$, for $2\leqslant i\leqslant n$; 
secondly, we apply T4 with each of the relations $e_i=g^{n-i+1}e_1g^{i-1}$ with $2\leqslant i\leqslant n$; 
finally, by using the relation $Q_1$, we simplify the new relations obtained, eliminating the trivial ones or those that are deduced from others. 
By performing this procedure for each of the sets of relations $Q_1$ to $Q_{10}$, 
it is a routine matter to check that we can obtain the following set $Q'$ of 
$\frac{1}{2}(3n^2-(3+(-1)^n)n+5-\frac{1+(-1)^n}{2})$ monoid relations on the alphabet $D'=\{g,e_1,x_1,x_2,\ldots,x_{\lfloor\frac{n-1}{2}\rfloor}\}$: 
\begin{description}
\item $(Q'_1)$ $g^n=1$; 

\item $(Q'_2)$ $e_1^2=e_1$; 

\item $(Q'_3)$ $e_1g^{n-j+i}e_1g^{n-i+j} = g^{n-j+i}e_1g^{n-i+j}e_1$, for $1\leqslant i<j\leqslant n$; 

\item $(Q'_5)$ $g(e_1g^{n-1})^n=(e_1g^{n-1})^n$; 

\item $(Q'_6)$ $e_1x_i=x_ie_1=g^{n-2i} (e_1g^{n-1})^{n-i-1} e_1 (e_1g^{n-1})^{i-1}$,  for $1\leqslant i \leqslant \lfloor\frac{n-1}{2}\rfloor$;

\item $(Q'_7)$ $x_{i} g^{n-j+1}e_1g^{j-1} =x_{i}$,  for $1\leqslant i \leqslant \lfloor\frac{n-1}{2}\rfloor$, $2\leqslant j\leqslant n$ and $j\ne n-i+1$;

\item $(Q'_8)$ $g^{n-j+1}e_1g^{j-1}x_{i}=x_{i}$,  for $1\leqslant i \leqslant \lfloor\frac{n-1}{2}\rfloor$, $2\leqslant j\leqslant n$ and $j\ne i+1$; 

\item $(Q'_9)$ $x_i g^ie_1g^{n-i} = g^{n-i}e_1g^i x_i = g^{n-1} (e_1g^{n-1})^{n-1}$, for $1\leqslant i \leqslant \lfloor\frac{n-1}{2}\rfloor$; 

\item $(Q'_{10})$ $(x_ig^i)^2 = g^{n-1} (e_1g^{n-1})^{i-2} e_1g^{n-2} (e_1g^{n-1})^{n-i-1}$, for $1\leqslant i \leqslant \lfloor\frac{n-1}{2}\rfloor$. 

\end{description}

Thus, we have: 

\begin{theorem}\label{rankpresopdi} 
The monoid $\OPDI_n$ is defined by the presentation $\langle D' \mid Q'\rangle$ on $2+\lfloor\frac{n-1}{2}\rfloor$ generators 
and $\frac{1}{2}(3n^2-(3+(-1)^n)n+5-\frac{1+(-1)^n}{2})$ relations. 
\end{theorem}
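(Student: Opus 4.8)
The plan is to start from the presentation $\langle D\mid Q\rangle$ for $\OPDI_n$ furnished by Theorem~\ref{firstpresopdi} and to transform it into $\langle D'\mid Q'\rangle$ by a finite sequence of elementary Tietze transformations, so that Proposition~\ref{tietze} yields the claim. The basic observation, already noted in the text preceding the statement, is that $e_i=g^{n-i+1}e_1g^{i-1}$ holds in $\OPDI_n$ for $2\leqslant i\leqslant n$. First I would verify that each of these equalities is actually a consequence of $Q$: from $ge_1=e_ng$ and $g^n=1$ (relations $Q_4$ and $Q_1$) one gets $e_n=ge_1g^{n-1}$, while $ge_{i+1}=e_ig$ gives the recursion $e_{i+1}=g^{n-1}e_ig$, from which $e_{i+1}=g^{n-i}e_1g^i$ follows by a straightforward induction using $g^n=1$. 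Hence (T1) permits adjoining the $n-1$ relations $e_i=g^{n-i+1}e_1g^{i-1}$, for $2\leqslant i\leqslant n$, to the presentation.

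Next I would apply (T4) with each of these $n-1$ relations, thereby deleting the generators $e_2,\ldots,e_n$ (so that the alphabet becomes exactly the minimum generating set $D'=\{g,e_1,x_1,\ldots,x_{\lfloor\frac{n-1}{2}\rfloor}\}$ of Proposition~\ref{gensetsrank}) and replacing every occurrence of $e_i$ in the surviving relations $Q_1$--$Q_{10}$ by the word $g^{n-i+1}e_1g^{i-1}$. After each substitution I would normalise using only $g^n=1$, pushing powers of $g$ together and reducing exponents modulo $n$. Two features account for the drop in the relation count from $|Q|$ to $|Q'|$, a difference of exactly $2n-1$. On one hand, all $n$ relations $Q_2$, namely $e_i^2=e_i$, collapse after substitution and cancellation of the outer $g$-powers to the single relation $e_1^2=e_1$, i.e.\ $Q'_2$, eliminating $n-1$ relations. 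On the other hand, the $n$ relations $Q_4$ are precisely those encoding the substitution, so each becomes a trivial identity modulo $g^n=1$ and is discarded by (T2); this removes the remaining $n$ relations and explains the absence of any $Q'_4$. The remaining families transform essentially one-to-one: $Q_1$ is unchanged as $Q'_1$, the $\binom{n}{2}$ commutations $Q_3$ become $Q'_3$, and the defining relations $Q_5$--$Q_{10}$ for the rank-$2$ generators become $Q'_5$--$Q'_{10}$, once the products $e_2\cdots e_n$ and $e_1\cdots e_n$ (and their partial analogues) are rewritten as powers of $e_1g^{n-1}$, with suitable leading and trailing powers of $g$, exactly as they appear in those relations.

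The routine but laborious part is the second paragraph: executing every substitution for $Q_5$ through $Q_{10}$ and simplifying via $g^n=1$ to match the stated words in $Q'_5$--$Q'_{10}$, then confirming that no further relation is redundant. I expect the main obstacle to be bookkeeping rather than conceptual difficulty: tracking the powers of $g$ that telescope when a product $e_{i_1}\cdots e_{i_k}$ is expanded, and verifying that the $2n-1$ discarded relations are exactly the trivial and collapsed ones identified above, so that the final tally equals $\frac{1}{2}(3n^2-(3+(-1)^n)n+5-\frac{1+(-1)^n}{2})$. Once all relations of $Q$ have been rewritten over $D'$ and the redundant ones removed, Proposition~\ref{tietze} guarantees that $\langle D'\mid Q'\rangle$ defines the same monoid $\OPDI_n$; a GAP check of the resulting relation count, as alluded to in the introduction, would serve as useful independent confirmation.
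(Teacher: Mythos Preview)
Your proposal is correct and follows essentially the same approach as the paper, which likewise applies (T1) to adjoin the relations $e_i=g^{n-i+1}e_1g^{i-1}$ for $2\leqslant i\leqslant n$, then (T4) to eliminate $e_2,\ldots,e_n$, and finally simplifies via $g^n=1$ to discard the trivial and collapsed relations. Your accounting for the drop of $2n-1$ relations (the $n$ relations of $Q_4$ becoming trivial and the $n$ relations of $Q_2$ collapsing to one) is a nice piece of detail that the paper leaves implicit in its ``routine matter to check''.
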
 

\medskip 

\paragraph{Acknowledgments.} 
We would like to thank the anonymous referee for the valuable comments and suggestions that allowed us to improve this paper.

\bigskip

\lastpage


\begin{thebibliography}{00}

%\bibitem{Aizenstat:1958}
%A.Ya. A\u{\i}zen\v{s}tat,
%Defining relations of finite symmetric semigroups,
%Mat. Sb. N. S.  45 (1958), 261--280 (Russian).
%
%\bibitem{Aizenstat:1962}
%A.Ya. A\u{\i}zen\v{s}tat,
%The defining relations of the endomorphism semigroup of a finite linearly ordered set,
%Sibirsk. Mat. 3 (1962), 161--169 (Russian).
%
%\bibitem{AlKharousi&Kehinde&Umar:2014}
%F. Al-Kharousi, R. Kehinde and A. Umar,
%Combinatorial results for certain semigroups of partial isometries of a finite chain,
%Australas. J. Combin. 58 (2014), 365--375.
%
%\bibitem{AlKharousi&Kehinde&Umar:2016}
%F. Al-Kharousi, R. Kehinde and A. Umar,
%On the semigroup of partial isometries of a finite chain,
%Commun. Algebra 44 (2016), 639--647.

\bibitem{Araujo&al:2011}
J. Ara\'ujo, V.H. Fernandes, M.M. Jesus, V. Maltcev and J.D. Mitchell, 
Automorphisms of partial endomorphism semigroups, Publicationes Mathematicae Debrecen 79 (2011), 23--39. 

%\bibitem{Catarino&Higgins:1999}
%P.M. Catarino and P.M. Higgins,
%The monoid of orientation-preserving mappings on a chain,
%Semigroup Forum 58 (1999), 190--206.
%
%\bibitem{Cicalo&al:2015}
%S. Cical\`o, V.H. Fernandes and C. Schneider,
%Partial transformation monoids preserving a uniform partition,
%Semigroup Forum 90  (2015), 532--544.

\bibitem{Cowan&Reilly:1995}
D.F. Cowan and N.R. Reilly, 
Partial cross-sections of symmetric inverse semigroups, 
Int. J. Algebra Computation 5 (1995), 259--287.

\bibitem{Delgado&Fernandes:2000}
M. Delgado and V.H. Fernandes,  
Abelian kernels of some monoids of injective partial transformations and an application,  
Semigroup Forum 61 (2000), 435--452.

%\bibitem{Dimitrova:2013}
%I. Dimitrova, 
%The Maximal Subsemigroups of the Semigroup of all Partial Order-preserving Isometries, 
%Proceedings of the 5-th International Scientific Conference FMNS-2013, Vol. 1 (2013), 95--101.

\bibitem{Dimitrova&al:2022}
I. Dimitrova, V.H. Fernandes, J. Koppitz and T.M. Quinteiro, 
On three remarkable submonoids of the dihedral inverse monoid on a finite set, 
arXiv:2301.01519 (2023),  
https://doi.org/10.48550/arXiv.2301.01519. 

\bibitem{East:2006}
J. East, 
A presentation of the singular part of the symmetric inverse monoid, 
Commun. Algebra 34 (2006), 1671--1689.

%\bibitem{East:2011}
%J. East,
%Generators and relations for partition monoids and algebras,
%J. Algebra 339 (2011), 1--26.

%\bibitem{Feng&al:2019}
%Y.-Y. Feng, A. Al-Aadhami, I. Dolinka, J. East and V. Gould,
%Presentations for singular wreath products,
%J. Pure Appl. Algebra 223 (2019), 5106--5146.

\bibitem{Fernandes:1997}
V.H. Fernandes,
Semigroups of order preserving mappings on a finite chain: a new class of divisors, 
Semigroup Forum 54 (1997), 230--236. 

\bibitem{Fernandes:1998}
V.H. Fernandes,
Normally ordered inverse semigroups, 
Semigroup Forum 56 (1998), 418--433. 

\bibitem{Fernandes:2000}
V.H. Fernandes,
The monoid of all injective orientation preserving partial transformations on a finite chain,
Commun. Algebra 28 (2000), 3401--3426.

\bibitem{Fernandes:2001}
V.H. Fernandes,
The monoid of all injective order preserving partial transformations on a finite chain,
Semigroup Forum 62 (2001), 178-204.

\bibitem{Fernandes:2001b}
V.H. Fernandes,
A division theorem for the pseudovariety generated by semigroups of orientation preserving transformations on a finite chain, 
Commun. Algebra 29 (2001), 451--456

%\bibitem{Fernandes:2002survey}
%V.H. Fernandes,
%Presentations for some monoids of partial transformations on a finite chain: a survey,
%Semigroups, Algorithms, Automata and Languages,
%eds. Gracinda M. S. Gomes \& Jean-\'Eric Pin \& Pedro V. Silva,
%World Scientific (2002), 363--378.

\bibitem{Fernandes:2008}
V.H. Fernandes, 
On divisors of pseudovarieties generated by some classes of full transformation semigroups, 
Algebra Colloq. 15 (2008), 581--588.

\bibitem{Fernandes:2022sub}
V.H. Fernandes,
On the cyclic inverse monoid on a finite set,
arXiv:2211.02155 (2022),  \\ % "\\" to be removed in final version? 
https://doi.org/10.48550/arXiv.2211.02155. 

\bibitem{Fernandes&Gomes&Jesus:2004}
V.H. Fernandes, G.M.S. Gomes and M.M. Jesus,
Presentations for some monoids of injective partial transformations on a finite chain,
Southeast Asian Bull. Math. 28 (2004), 903--918.

\bibitem{Fernandes&Gomes&Jesus:2005}
V.H. Fernandes, G.M.S. Gomes and M.M. Jesus,
Congruences on monoids of order-preserving or order-reversing transformations on a finite chain, 
Glasg. Math. J. 47 (2005), 413--424.

%\bibitem{Fernandes&Paulista:2022arxiv}
%V.H. Fernandes and T. Paulista,
%On the monoid of partial isometries of a finite star graph,
%Commun. Algebra 
%(DOI 10.1080/00927872.2022.2121404). Online (2022).

\bibitem{Fernandes&Paulista:2022sub}
V.H. Fernandes and T. Paulista,
On the monoid of partial isometries of a cycle graph,
%Turkish Journal of Mathematics, 
Turk. J. Math., 47 (2023), 1746--1760.

%\bibitem{Fernandes&Quinteiro:2016}
%V.H. Fernandes and T.M. Quinteiro,
%Presentations for monoids of finite partial isometries,
%Semigroup Forum 93 (2016), 97--110.

\bibitem{Fernandes&Santos:2019}
V.H. Fernandes and P.G. Santos, 
Endomorphisms of semigroups of order-preserving partial transformations, 
Semigroup Forum 99 (2019), 333--344. 

\bibitem{Ganyushkin&Mazorchuk:2003}
O. Ganyushkin and V. Mazorchuk, 
On the Structure of $\mathcal{IO}n$, 
Semigroup Forum 66 (2003), 455--483. 
 
\bibitem{Howie:1995}
J.M. Howie,
Fundamentals of Semigroup Theory,
Oxford, Oxford University Press, 1995.

%\bibitem{Howie&Ruskuc:1994}
%J.M. Howie and N. Ru\v{s}kuc,
%Constructions and presentations for monoids,
%Commun. Algebra 22 (1994), 6209--6224.
%
%\bibitem{Higgins&Vernitski:2022}
%P.M. Higgins and A. Vernitski,
%Orientation-preserving and orientation-reversing mappings: a new description,
%Semigroup Forum 104 (2022), 509--514.

\bibitem{Lallement:1979}
G. Lallement,
Semigroups and Combinatorial Applications,
John Wiley \& Sons, New York, 1979.

\bibitem{Li&Fernandes:2023}
D.B. Li and V.H. Fernandes, 
Endomorphisms of semigroups of oriented transformations, 
Semigroup Forum, 106 (2023), 184--210. 

%\bibitem{McAlister:1998}
%D. McAlister,
%Semigroups generated by a group and an idempotent,
%Commun. Algebra  26 (1998), 515--547.
%
%\bibitem{Moore:1897}
%E.H. Moore,
%Concerning the abstract groups of order $k!$ and
%	$\frac12k!$ holohedrically isomorphic with the symmetric and
%	the alternating substitution-groups on $k$ letters,
%Proc. London Math. Soc. 28 (1897), 357--366.
%
%\bibitem{Popova:1961}
%L.M. Popova,
%The defining relations of certain semigroups of partial transformations of a finite set,
%Leningrad. Gos. Ped. Inst. U\v cen. Zap. 218 (1961), 191--212 (Russian).
%
%\bibitem{Popova:1962}
%L.M. Popova,
%Defining relations of a semigroup of partial endomorphisms of a finite linearly ordered set,
%Leningrad. Gos. Ped. Inst. U\v cen. Zap. 238 (1962), 78--88 (Russian).

\bibitem{Ruskuc:1995}
N. Ru\v{s}kuc,
Semigroup Presentations,
Ph.D. Thesis, University of St-Andrews, 1995.

\bibitem{GAP4}
The GAP~Group, \emph{GAP -- Groups, Algorithms, and Programming,
 Version 4.11.1}; 2021. \newline (https://www.gap-system.org)

\end{thebibliography}
\end{document}